\documentclass{amsart}

\usepackage{amsmath}
\usepackage{amsrefs}
\usepackage{latexsym}
\usepackage{amssymb}
\usepackage{setspace}
\usepackage{xcolor}
\usepackage{ifthen}
\usepackage{dsfont}
\usepackage{graphicx, caption}
\usepackage{overpic}


\newtheorem{Thm}{Theorem}[section]
\newtheorem{Theorem}{Theorem}
\newtheorem{Lem}[Thm]{Lemma}

\newtheorem{Cor}[Thm]{Corollary}

\theoremstyle{definition}
\newtheorem{Def}[Thm]{Definition}
\newtheorem{Not}[Thm]{Notation}
\newtheorem{example}[Thm]{Example}

\theoremstyle{remark}
\newtheorem{Rem}[Thm]{Remark}

\numberwithin{equation}{section}



\newcommand{\ina}{infinite non-affine } 
\newcommand{\id}{\mathds{1}}
\newcommand{\HH}{\mathbb{H}}
\newcommand{\RR}{\mathbb{R}}
\newcommand{\NN}{\mathbb{N}}
\newcommand{\Iso}{\text{\textit{Iso}}}
\makeatletter
\@namedef{subjclassname@2020}{\textup{2020} Mathematics Subject Classification}
\makeatother

\newboolean{ComsOn}
\setboolean{ComsOn}{false}
\ifthenelse{\boolean{ComsOn}}{\newcommand{\com}[2][blue]{\textcolor{#1}{#2}}}{\newcommand{\com}[2][blue]{}}

\renewcommand{\emptyset}{\varnothing}


\begin{document}

\title{Reflection length at infinity in hyperbolic reflection groups}
\author{Marco Lotz}
\address{Fakultät für Mathematik, Otto-von-Guericke-University Magdeburg, 
Universitätsplatz 2,
39106 Magdeburg}
\curraddr{Institut für Algebra und Geometrie (IAG),
Otto-von-Guericke-University Magdeburg, 
Universitätsplatz 2,
39106 Magdeburg}
\email{marco.lotz@ovgu.de}


\subjclass[2020]{Primary 20F55, 20F65; Secondary 51M20, 51M10}

\date{March 15, 2023.}


\keywords{Reflection length, Coxeter groups, hyperbolic reflection groups, visual boundary, hyperbolic geometry}

\begin{abstract}
In a discrete group generated by hyperplane reflections in the $n$-dimensional hyperbolic space, the reflection length of an element is the minimal number of hyperplane reflections in the group that suffices to factor the element. For a Coxeter group that arises in this way and does not split into a direct product of spherical and affine reflection groups, the reflection length is unbounded. The action of the Coxeter group induces a tessellation of the hyperbolic space. After fixing a fundamental domain, there exists a bijection between the tiles and the group elements. We describe certain points in the visual boundary of the $n$-dimensional hyperbolic space for which every neighbourhood contains tiles of every reflection length. To prove this, we show that two disjoint hyperplanes in the $n$-dimensional hyperbolic space without common boundary points have a unique common perpendicular.  
\end{abstract}

\maketitle

\section*{Introduction}

Two distinct hyperplanes in the $n$-dimensional hyperbolic space $\HH^n$ intersect, have a common point in their visual boundary $\partial \HH^n$ or are ultra-parallel. A finite set $S$ of reflections across hyperplanes generates a Coxeter group $W$ if pairs of the corresponding hyperplanes don't intersect or have a submultiple of $\pi$ as a dihedral angle.
The action of $W$ as a discrete subgroup of the group of isometries of $\HH^n$ is proper. A strict fundamental domain $P$ for this action is an intersection of half-spaces of the hyperplanes corresponding to the generators. This induces a tessellation of $\HH^n$ with copies of $P$. There exists a bijection between the tiles and the elements in $W$.
The minimal number of hyperplane reflections in $W$ that suffices to reflect a tile $wP$ with $w\in W$ back to the fundamental domain $P$ is its \textit{reflection length}. The set of hyperplane reflections $R$ in $W$ is exactly the set of the elements that are conjugated to a generator in $S$. The reflection length of the tile $wP$ is equal to the word length $l_R(w)$ of the element $w$ with respect to the generating set $R$, which we call reflection length.\par
The reflection length in spherical as well as in affine reflection groups is well understood, formulas exist and it is a bounded function on these groups (see \cite{Carter1972} and \cite{Lewis2018}). On the other hand, little is known about reflection length in Coxeter groups that don't split into a direct product of spherical and affine reflection groups. We call these groups \textit{\ina}. Duszenko proved that the reflection length is unbounded in \ina Coxeter groups (see \cite{Duszenko2011}).  For groups without braid relations between at least three distinct generators, powers of Coxeter elements have unbounded reflection length (see \cite{Drake2021}). In general, this does not hold and it is hard to find elements with large reflection length.\par
We call \ina Coxeter groups that are generated by finitely many hyperplane reflections in $\HH^n$ \textit{hyperbolic reflection groups} and denote them often as a tuple with their generating set.
Let $\overline{\HH}{}^n$ be the union of $\HH^n$ with its visual boundary $\partial \HH^n$ equipped with the cone topology.
In this work, it is shown that tiles with reflection length $n$ exist in every neighbourhood of certain points in the visual boundary. These points are namely common ideal points of two distinct hyperplanes corresponding to reflections in $R$ or endpoints of the common perpendicular of two ultra-parallel hyperplanes.

\begin{Theorem}\label{Thm: Reflection length n in certain neighbourhood}
Let $(W,S)$ be a hyperbolic reflection group in $\HH^n$ with fundamental domain $P$. Let $R$ be the set of reflections in $W$. Let $U$ be a neighbourhood in $\overline{\HH}{}^n$ of a point $\xi$ in $\partial \HH^n$. Suppose $\xi$ satisfies one of the following conditions:
\begin{itemize}
\item[(i)] $\xi$ is a common point of two parallel hyperplanes $H_r, H_{r'}$ with $r,r'\in R$.
\item[(ii)]$\xi$ is an endpoint of the common perpendicular of two ultra-parallel hyperplanes $H_r, H_{r'}$ with $r,r'\in R$.
\end{itemize}
For every $k\in \NN$ there exists $w\in W$ with $l_R(w)=k$ such that the domain $wP$ is contained in $U$.
\end{Theorem}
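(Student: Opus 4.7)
The plan is, for each $k \in \NN$, to exhibit a tile of reflection length exactly $k$ contained in $U$ by conjugating a fixed element of reflection length $k$ by a large power of the infinite-order isometry $\tau := r r'$. In case (ii), the ultra-parallel hyperplanes $H_r$ and $H_{r'}$ admit a unique common perpendicular geodesic $\ell$ (by the lemma announced in the abstract), with ideal endpoints $\xi$ and a second boundary point $\xi' \in \partial \HH^n$; after possibly swapping $r$ and $r'$, the composition $\tau$ is a hyperbolic isometry with axis $\ell$, attracting fixed point $\xi$, and repelling fixed point $\xi'$. In case (i), working in the upper-half-space model sending $\xi$ to $\infty$, the hyperplanes $H_r$ and $H_{r'}$ become parallel vertical Euclidean hyperplanes, so that $\tau$ is the parabolic isometry acting as a Euclidean translation of $\RR^{n-1}$. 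In both cases, $\tau^m$ converges uniformly on every compact subset of $\overline{\HH}{}^n$ avoiding the exceptional fixed point to the constant map $\xi$ in the cone topology; the exceptional point is $\xi'$ in case (ii), and $\xi$ itself in case (i), where even the backward iteration $\tau^{-m}$ drifts every other point to $\xi$.

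By Duszenko's theorem applied to the infinite non-affine Coxeter group $(W,S)$, there is some $w_k \in W$ with $l_R(w_k) = k$. Since $l_R$ is invariant under conjugation, I may replace $w_k$ by any of its $W$-conjugates. I arrange this representative so that $w_k(\xi') \neq \xi'$ in case (ii), respectively $w_k(\xi) \neq \xi$ in case (i). Such a representative exists because a non-trivial isometry of $\HH^n$ has a small fixed-point set on $\partial \HH^n$, while an infinite non-affine Coxeter group cannot globally fix a boundary point: otherwise $W$ would act on a horosphere at that point as a Euclidean reflection group and hence be affine, contradicting the hypothesis.

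Set $w^{(m)} := \tau^m w_k \tau^{-m}$. Conjugation invariance yields $l_R(w^{(m)}) = k$ for every $m$. I claim that $w^{(m)} P \subset U$ for all sufficiently large $m$, giving the desired $w = w^{(m)}$. The closure $\overline{P}$ in $\overline{\HH}{}^n$ is compact. Along the chain
\[
\overline{P} \xrightarrow{\tau^{-m}} \tau^{-m}\overline{P} \xrightarrow{w_k} w_k \tau^{-m}\overline{P} \xrightarrow{\tau^m} w^{(m)} \overline{P},
\]
the first arrow pushes $\overline{P}$, in the Hausdorff sense, to the exceptional point; the second applies the fixed homeomorphism $w_k$ of $\overline{\HH}{}^n$, landing by the above choice in a compact set disjoint from the exceptional point; the third uses the dynamics of $\tau^m$ from the first paragraph to drive this set into $U$. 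Thus $w^{(m)} \overline{P} \subset U$ for large $m$, and in particular $w^{(m)} P \subset U$.

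The main obstacle is the non-cocompact case, where $P$ is unbounded and $\overline{P}$ contains ideal vertices in $\partial \HH^n$, possibly including $\xi$ itself in case (i). Then a cusp of $P$ has infinite hyperbolic extent and is mapped by $w^{(m)}$ to a cusp at the point $w^{(m)}(\text{cusp vertex}) \to \xi$, and one must verify that this cusp is still contained in $U$. This reduces to showing that horoballs based at boundary points converging to $\xi$ in the cone topology are themselves eventually contained in any preassigned cone-topology neighbourhood of $\xi$; a direct computation with horoballs in the upper-half-space model, comparing Euclidean radii at the tangent points $w^{(m)}(\xi)$, confirms that this is indeed the case and completes the argument.
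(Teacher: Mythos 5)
Your route is genuinely different from the paper's: the paper never conjugates by powers of $\tau=rr'$; instead its Lemmas \ref{Lem: Hyperplane in every neighbourhood, parallel hyperplanes with common point} and \ref{Lem: Ultra parallel hyperplanes, hyperplane in neighbourhood of endpoint of perpendicular} produce an entire mirror $H_r$, $r\in R$, inside $U$, hence a whole half-space of $H_r$ inside $U$; it then takes $w$ with $l_R(w)=k+1$ (Duszenko), reflects $wP$ into that half-space if necessary, and runs a gallery inside the (convex) half-space from $rP$ to the reflected tile, along which the reflection length changes by exactly $\pm 1$ at each wall-crossing, so every value between $1$ and at least $k$ is attained. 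That argument needs no control whatsoever on the shape of $\overline{P}$ at infinity. Your argument does, and that is where it has a genuine gap. The step ``$\tau^{-m}\overline{P}$ converges in the Hausdorff sense to the exceptional point'' is valid only if $\overline{P}$ avoids a neighbourhood of the repelling fixed point of $\tau^{-1}$, and the hypotheses do not guarantee this: in case (i) the point $\xi$ can be an ideal vertex of $P$ itself (this happens already in the $(2,3,\infty)$ triangle group pictured in the paper), and in case (ii) nothing prevents $\xi\in\overline{P}$ when $P$ lies on the far side of $H_{r'}$. Worse, the theorem allows infinite-covolume Coxeter polyhedra (compare the paper's closing Remark, where $\overline{P}\cap\partial\HH^n$ contains entire arcs); there $\overline{P}$ can accumulate at the exceptional point along the boundary, $\tau^{-m}\overline{P}$ does not converge to a point, and $\limsup_m \tau^{-m}\overline{P}$ can be a large subset of $\partial\HH^n$, so the three-arrow chain breaks at the first arrow and cannot be restored merely by choosing the conjugate $w_k$ well.

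Your final paragraph acknowledges only the finite-volume symptom of this problem and the proposed repair is not sound: the asserted reduction --- that horoballs based at points $\zeta_m\to\xi$ are eventually contained in any preassigned neighbourhood of $\xi$ --- is false as stated, since it requires simultaneous control of the Euclidean sizes of the image horoballs (a horoball tangent at $\zeta_m$ whose diameter grows fast enough relative to the distance from $\zeta_m$ to $\xi$ invades any fixed ball), and that size control depends on distortion estimates for $w_k$ near its pole which you never carry out; moreover the cusp/horoball picture says nothing about the infinite-covolume case, where there are no cusps at all. What would actually be needed is a convexity-plus-fundamental-domain argument, for instance: in case (i), in the upper half-space model with $\xi=\infty$ and $\tau$ the translation by $v$, that $\inf_{p\in\overline{P}}\lVert p-mv\rVert\to\infty$ (if not, one finds long near-horizontal segments in $P$ forcing $\operatorname{int}P\cap\tau\operatorname{int}P\neq\emptyset$); and in case (ii), that $\xi\notin\overline{P}$. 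Both are plausible but nontrivial and absent from your write-up. A smaller issue: to arrange $w_k(\xi')\neq\xi'$ you should note that if every $W$-conjugate of $w_k$ fixed $\xi'$, then $w_k$ would fix the closure of the orbit $W\xi'$, which contains the limit set, pointwise --- your ``small fixed-point set'' remark gestures at this but does not close it. In summary: for cocompact groups ($\overline{P}\subseteq\HH^n$ compact) your dynamical proof is essentially correct and pleasantly conceptual, but in the generality of the theorem it has a real gap that the paper's hyperplane-plus-gallery argument is specifically built to avoid.
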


Here, the subscription of the hyperplanes with reflections means that the reflection across this hyperplane is an element in $W$. To consider these two types of ideal points, we prove that every pair of ultra-parallel hyperplanes in $\HH^n$ has a unique common perpendicular. This generalizes a theorem of Hilbert in $\HH^2$ for arbitrary dimension. 

\begin{Theorem}[Ultra-parallel theorem for subspaces]\label{Thm: Ultra-parallel theorem for subspaces}
Every pair of ultra-parallel geodesic subspaces in $\mathbb{H}^n$ has a common perpendicular. If both these subspaces are hyperplanes, the common perpendicular is unique. Every hyperplane intersecting both hyperplanes in a right angle contains this perpendicular.
\end{Theorem}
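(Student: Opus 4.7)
The plan is to separate the theorem into an existence argument that works for all geodesic subspaces and a uniqueness plus last-assertion argument that uses the hyperplane hypothesis in an essential way.

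For existence, let $V_1, V_2$ be ultra-parallel geodesic subspaces and consider the convex function $f\colon V_1 \to \RR$, $f(x) = d(x, V_2)$ (convex because $V_2$ is totally geodesic). I would first establish that $f(x) \to \infty$ whenever $x$ escapes compact sets in $V_1$: if $x_n \in V_1$ converges in the cone topology to $\xi \in \partial V_1$ and $f(x_n) \le C$, then any choice of $y_n \in V_2$ with $d(x_n, y_n) \le C$ forces $y_n$ to converge to the same ideal point $\xi$, since in $\HH^n$ bounded distance between two sequences going to infinity forces them to share cone-topology limits. As $\overline{V}_2 = V_2 \cup \partial V_2$ is closed in $\overline{\HH}{}^n$, this yields $\xi \in \partial V_2$, contradicting $\overline{V}_1 \cap \overline{V}_2 = \emptyset$. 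By convexity and continuity, $f$ then attains its infimum at some $p \in V_1$, and a closest point $q \in V_2$ exists. A standard first-variation argument (perturb $p$ along any geodesic in $V_1$ and use $d(p_t, q) \ge f(p_t) \ge f(p) = d(p,q)$) shows that $[p,q]$ meets $V_1$ at $p$ and $V_2$ at $q$ perpendicularly.

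For uniqueness and the last statement I specialize to hyperplanes and use the hyperboloid model $\HH^n = \{x \in \RR^{n,1} : \langle x, x\rangle = -1,\ x_0 > 0\}$. A hyperplane is $H_v = v^\perp \cap \HH^n$ for a unit spacelike $v$, and a geodesic of $\HH^n$ is the intersection $W' \cap \HH^n$ with a signature-$(1,1)$ plane $W'$. The geodesic $W' \cap \HH^n$ meets $H_v$ perpendicularly at a point $p$ if and only if $v \in W'$: the tangent direction is the unique unit vector in $W' \cap p^\perp$ and must be proportional to the normal $v$. If $H_{v_1}, H_{v_2}$ are ultra-parallel, then $|\langle v_1, v_2\rangle| > 1$ and $W := \mathrm{span}(v_1, v_2)$ has signature $(1,1)$; this simultaneously gives existence and uniqueness in the hyperplane case, because any common perpendicular corresponds to a $(1,1)$-plane containing both $v_1$ and $v_2$, and hence coinciding with $W$. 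For the last claim, a further hyperplane $H_{v'}$ is perpendicular to $H_{v_i}$ iff $\langle v', v_i \rangle = 0$; so $v' \perp W$, equivalently $W \subset (v')^\perp$, giving $W \cap \HH^n \subset H_{v'}$.

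I expect the main obstacle to be the infinity step in the existence argument: rigorously verifying that $\overline{V}_i$ is closed in $\overline{\HH}{}^n$ and that bounded hyperbolic distance between escaping sequences preserves cone-topology limits. Both are standard folklore, most transparent in the Poincaré ball model, but need some care to set up for a general geodesic subspace. Once they are available, everything else — convexity of $f$, existence of closest-point projections onto totally geodesic subspaces, the first-variation step, and the Lorentzian linear algebra of the second paragraph — is routine.
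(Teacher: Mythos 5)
Your proposal is correct, but its existence half takes a genuinely different route from the paper, while your uniqueness half essentially reproduces the paper's argument. The paper proves existence constructively in the Poincar\'e ball model: the two subspaces are represented by spheres $S_a,S_b$ orthogonal to $\mathbb{S}^{n-1}$, the Euclidean segment between their centres meets $\mathbb{S}^{n-1}$ in two points $I_a,I_b$ (Lemma \ref{Lem: double intersection}), and a circle $C_Q$ through $I_a,I_b$ orthogonal to $\mathbb{S}^{n-1}$, $S_a$ and $S_b$ is produced via a tangent-line construction and verified by Pythagorean identities. Your variational argument is sound and model-free: properness of $f(x)=d(x,V_2)$ on $V_1$ follows exactly as you say from $\overline{V}_1\cap\overline{V}_2=\emptyset$, sublevel sets are then compact so the infimum is attained (note convexity of $f$ is not actually needed for this step), and the first-variation argument at both feet forces angle exactly $\pi/2$ because $V_1,V_2$ are totally geodesic, so one-sided bounds in opposite directions combine; the two ``folklore'' facts you flag are indeed standard -- $\overline{V}_2$ is compact in $\overline{\HH}{}^n$, and convexity of the metric in a CAT$(0)$ space shows that sequences at bounded distance from each other have the same cone-topology limit (see \cite{Bridson2009}). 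What each approach buys: yours avoids the paper's case distinctions (whether the ball's centre lies on the segment of centres, whether a representing sphere passes through $\infty$) and generalizes verbatim to Hadamard manifolds; the paper's explicit construction, on the other hand, produces concrete collinearity data that is quoted again later -- the proof of Lemma \ref{Lem: Ultra parallel hyperplanes, hyperplane in neighbourhood of endpoint of perpendicular} relies on the points constructed in this proof lying on the segment $[c_1,c_2]$ -- so your argument could not be substituted without reworking that lemma. For uniqueness and the final claim you use the same hyperboloid-model linear algebra as the paper (any common perpendicular's signature-$(1,1)$ plane must contain both unit normals, hence equals $\mathrm{span}(v_1,v_2)$; a hyperplane orthogonal to both satisfies $\langle v',v_i\rangle=0$, so $\mathrm{span}(v_1,v_2)\subseteq (v')^{\perp}$), with one small improvement: by observing $\lvert\langle v_1,v_2\rangle\rvert>1$ you verify that $\mathrm{span}(v_1,v_2)$ has signature $(1,1)$ and genuinely meets each $H_{v_i}$ (the line $\mathrm{span}(v_1,v_2)\cap v_i^{\perp}$ is timelike and hits the upper sheet), which yields an independent, self-contained existence proof in the hyperplane case -- a point the paper leaves implicit by leaning on its constructive first part.
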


In a final step, we restrict our attention to hyperbolic reflection groups with a convex polytope as a strict fundamental domain. We show that the set of ideal points, which satisfy condition (i) or (ii) in  Theorem \ref{Thm: Reflection length n in certain neighbourhood}, is dense in $\partial \HH^n$. As a consequence, Theorem \ref{Thm: Reflection length n in certain neighbourhood} still holds in this setting if we drop the conditions for the ideal point.\par
The proofs of Theorem 1 and 2 are of geometric nature and use the Poincaré ball model as well as the hyperboloid model for $\HH^n$. \newline
\paragraph{\bf{Structure}} This article has five sections. After recalling briefly the foundations of the hyperbolic space and the connection between Coxeter groups and hyperbolic geometry in the preliminaries, we prove the Ultra-parallel theorem for subspaces with fundamental Euclidean geometry in Section \ref{Section: Ultra-parallel theorem for subspaces}. In the third section, the main result, Theorem \ref{Thm: Reflection length n in certain neighbourhood}, is established. Afterwards, we prove some geometric lemmata in Section \ref{Section 4} in preparation for the last section, where we consider only convex polytopes as strict fundamental domains.\newline
\com{ mention that not all coxeter groups in general touched/comparison with affine groups}

\section{Preliminaries}\label{section 1}

In this section, we recall the basic notions of the real hyperbolic $n$-space $\HH^n$, foundations about Coxeter groups as well as the connection between hyperbolic geometry and theory of Coxeter groups. $\HH^n$ is the complete, simply connected Riemannian $n$-dimensional manifold of constant sectional curvature $-1$. The hyperboloid model and the Poincaré ball model for $\HH^n$ are introduced. Moreover, we define the visual boundary and the cone topology on $\overline{\HH}{}^n$.
For a thorough treatment of the topics we refer the reader to \cite{Bridson2009}, \cite{Davis2012} and \cite{Vinberg1993}.

\begin{Def}
Let $\mathbb{X}$ be a metric space and $I$ an interval of the form $(-\infty,\infty),$ $[a, \infty)$ or $[a,b]$ with $a,b\in \RR$.  A \textit{geodesic line}, \textit{geodesic ray}, \textit{geodesic segment}  in $\mathbb{X}$ is an isometry $\gamma: I \to \mathbb{X}$. We write $\gamma\subseteq \mathbb{X}$ instead of $\gamma(I)\subseteq \mathbb{X}$.
The space $\mathbb{X}$ is called \textit{geodesic space} if for every pair of points $x,y$ in $\mathbb{X}$ there exists a geodesic segment $\lambda: [a,b]\to \mathbb{X}$ with $\lambda(a)=x$ and $\lambda(b)=y$.
\end{Def}


\paragraph{\bf{The hyperboloid model}} Let $\RR^{n,1}$ be the real vector space equipped with the 
symmetric bilinear form $\langle \cdot |\cdot \rangle$ of type $(n, 1)$ ($n$ positive and $1$ negative eigenvalue). 
The $n$-dimensional \textit{hyperboloid model} is the upper sheet of a hyperboloid defined as follows:
\[
\HH^n:=\{v=(v_1,\dots , v_{n+1})\in \mathbb{R}^{n,1}\mid \langle v|v\rangle=-1, v_{n+1}>0\}.
\]
The bilinear form induces a metric $d(x,y)=\cosh^{-1}(-\langle x|y\rangle)$ for $x,y\in\HH^n$. We always consider $\HH^n$ as a metric space.
Non-empty intersections of $2$-dimensional subspaces of $\RR^{n,1}$ with $\HH^n$ are the geodesic lines in $\HH^n$. For two geodesic lines $\gamma_1$ and $\gamma_2$ intersecting in a point $p\in \HH^n$ there exist unit vectors $u_{i}\in \RR^{n,1}$ with $\langle u_{i}| p \rangle=0$ and such that $u_{i}$ is contained in the $2$-dimensional subspace of $\RR^{n,1}$ according to $\gamma_i$. The hyperbolic angle between $\gamma_1$ and $\gamma_2$ is the unique number $\alpha\in [0,\pi]$ with $\alpha= \cos^{-1}(\langle u_{1}|u_{2}\rangle)$.\par
Hyperplanes are defined as non-empty intersections of $n$-dimensional subspaces of $\RR^{n,1}$ with $\HH^n$ and are isometric to $\HH^{n-1}$. For a hyperplane $H$, we denote the corresponding subspace in $\RR^{n,1}$ with $V_{H}$. We have two (closed) half-spaces $V_H^+, V_H^-\subseteq R^{n,1}$ with $V_H^+\cup V_H^-= \RR^{n,1}$ and $H^\pm=V_H^\pm\cap \HH^n$ with $H_i^+\cup H_i^-= \HH^n$. Every hyperplane $H$ induces an isometry on $\HH^n$, the reflection $s$ across $H$:
\begin{equation}\label{Eq: Formular reflection on hyperplane}
s:\HH^n\to \HH^n\, ; \qquad x\mapsto x-2\langle u_{H},x\rangle u_{H}
\end{equation}
where $u_{H}$ is the up to sign unique unit vector orthogonal to $V_{H}$ in $\RR^{n,1}$ with respect to the bilinear from $\langle\cdot\,,\cdot\rangle$. The fixed point set of $s$ is exactly $H$ and one half-space is mapped to the other one.\par
\vspace{10pt}

\paragraph{\bf{The Poincaré ball model}} In the \textit{Poincaré ball model}, the points of the hyperbolic space are represented by points in the open unit ball $D^n$ in the Euclidean $n$-space $\mathbb{E}^n$. There exists a homeomorphism $D^n\to \HH^n$ by which the metric on $\HH^n$ can be pulled back to $D^n$.
Geodesic lines are the lines and circles n $\mathbb{E}^n$ that meet the boundary $\partial D^n=\mathbb{S}^{n-1}$ in a right angle. The angle between two segments issuing from a point is equal to the Euclidean angle between the segments. Let  $\widehat{\mathbb{E}}^n:= \mathbb{E}^n\cup \{\infty\}$ denote the one-point compactification of $\mathbb{E}^n$.
The hyperplanes in this model are the intersections of $D^n$ with $(n-1)$-dimensional spheres in $\widehat{\mathbb{E}}^n$  that intersect $\mathbb{S}^{n-1}$  orthogonally. If a hyperplane $H$ is the intersection of a $(n-1)$-dimensional sphere containing $\infty$ with $D^n$, the reflection on $H$ is the reflection on $H$ as a $(n-1)$-dimensional subspace  in $\mathbb{E}^n$ restricted to $D^n$. Else $H$ is represented by a sphere $S$ with radius $r$ and centre $c$. In this case, the reflection across $H$ is $i_S$,  the inversion on $S$ in the one-point compactification $\widehat{\mathbb{E}}^n$ restricted to $D^n$:
\begin{equation}\label{Eq: Formula inversion on a sphere}
i_S:\widehat{\mathbb{E}}^n\to \widehat{\mathbb{E}}^n\, ; \qquad x\mapsto \dfrac{r^2}{||x-c||^2}\cdot (x-c)+c.
\end{equation}
\par 

\begin{Def}$ $\newline
\vspace{-10pt}

\begin{enumerate}
\item A \textit{convex polyhedral cone} $C$ in $\RR^{n,1}$ is the intersection of a finite number of
half-spaces marked by $n$-dimensional subspaces in $\RR^{n,1}$. 
\item A \textit{convex polytope} (with possibly ideal vertices) in $\HH^n$ is the intersection of $\HH^n$ with a convex polyhedral cone $C$ in $\mathbb{R}^{n,1}$ that is contained in the light cone \[\{ v=(v_1,\dots , v_{n+1})\in \mathbb{R}^{n,1}\;|\; \langle v|v\rangle\leq 0, v_{n+1}>0 \}.\] \label{Def: Convex polytope} 
\item  A \textit{convex polyhedron} in $\HH^n$ is an intersection of finitely many half-spaces in $\HH^n$ having a non-empty interior. 
\end{enumerate}
The notion of a convex polytope 
also includes convex polyhedra of finite volume.
\end{Def}

\subsection{Coxeter groups and hyperbolic reflection groups}

We look at discrete subgroups of $\Iso(\HH^n)$, the group of isometries of $\HH^n$, generated by finitely many hyperplane reflections. Before restricting to hyperbolic geometry, Coxeter groups are introduced algebraically, which can be understood as abstract reflection groups. 
\begin{Def}
A group with a finite set of generators $S=\{s_1, \dots s_n\}$ that admits a presentation of the form
\[
\langle S\;|\; s_i^2= (s_is_j)^{m_{ij}}= \id \, ,\,\forall i,j\in \{1,\dots, n\},\; m_{ij}\in \mathbb{N}_{\geq 2} \cup \{\infty\} \rangle,
\]
is called \textit{Coxeter group}.  We call the pair $(W,S)$ \textit{Coxeter system}.\par
An \textit{\ina} Coxeter group is a Coxeter group that does not split into a direct product of spherical and affine reflection groups. \par
A \textit{special subgroup} of a Coxeter system $(W, S)$ is a subgroup generated by a subset $S'\subseteq S$. 
\end{Def}

Spherical and affine reflection groups are completely classified (e.g. see \cite{Humphreys1990}). We relate \ina Coxeter groups to reflection groups in $\HH^n$. 
Classical works on Coxeter groups and reflection groups are \cite{Humphreys1990} and \cite{Davis2012}.

\begin{figure}
\includegraphics[scale=0.4]{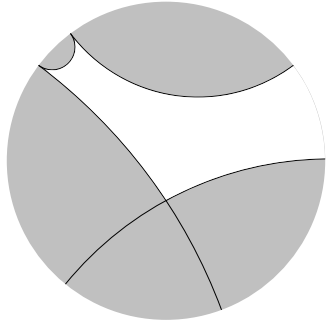}
\caption{A convex polyhedron in the Poincaré ball model marked white}
\end{figure}

\begin{Def}
A convex polyhedron
\[
P=\bigcap_{i\in I} H_i^{\varepsilon_i}
\]
with $\varepsilon_i\in \{+,-\}$ is called a \textit{Coxeter polyhedron} if for all intersecting hyperplanes $H_i, H_j$  with  $i,j\in I, \;i\neq j$ the dihedral angle $\measuredangle(H_i, H_j)$ is a
submultiple of $\pi$. We call a hyperplane $H_i$ \textit{wall} if the closure $\overline{P}$ and $H_i$ intersect.
\end{Def}

\begin{Def}
Let $G$ be a discrete group. A $G$-action on a Hausdorff
space $Y$ is \textit{proper} (or \textit{properly discontinuous}) if the following three
conditions are satisfied:
\begin{itemize}
\item[(i)] $Y/G$ is Hausdorff.
\item[(ii)] For each $y \in  Y$, the stabilizer $G_y= \{g \in G \;|\; gy = y\}$ is finite.
\item[(iii)] Each $y \in Y$ has a $G_y$-stable neighbourhood $U_y$ such that $gU_y \cap  U_y = \emptyset$
for all $g \in G - G_y$.
\end{itemize}
\end{Def}

\begin{Def}
Suppose a group $G$ acts on a space $Y$. A closed subset $A \subseteq Y$ is
a \textit{strict fundamental domain} if each $G$-orbit intersects $A$ in exactly one point.
\end{Def}

The following theorem is the most important connection between hyperbolic geometry and Coxeter groups for this work. 

\begin{Thm}[Vinberg, \cite{Vinberg1993}, pp. 199]\label{Thm: Polyhedron reflection group is Coxeter group, Vinberg+Davis}
Let $P=\bigcap_{i\in I} H_i^{\varepsilon_i}$ be a Coxeter polyhedron in $\HH^n$ and $W(P)$
the group generated by reflections $\{s_i\mid i\in I\}$ on its walls. 
Under this assumptions, the following holds: 
\begin{itemize}
\item[(i)] $W(P)$ is a discrete subgroup of $Iso(\HH^n)$ generated by hyperplane reflections.  
\item[(ii)] $W(P)$ acts properly on $\HH^n$.
\item[(iii)] $P$ is a strict fundamental domain for the $W(P)$-action on $\HH^n$.
\item[(iv)] $W(P)$ is a Coxeter group with defining relations $s_i^2=id$ for all $i\in I$ and $(s_js_k)^{m_{jk}}=id$ for intersecting $H_j$ and $H_k$ with dihedral angle $\measuredangle(H_j, H_k)=\frac{\pi}{m_{jk}}$.
\item[(v)] The stabilizer $W(P)_x$ of any point $x\in P$ (including points at infinity) is generated by reflections across the walls of $P$ containing $x$.
\end{itemize}
\end{Thm}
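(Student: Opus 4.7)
This is a classical theorem of Vinberg, and the strategy would be to compare the geometric group $W(P)$ with an abstract Coxeter group built from the combinatorial data of $P$. The plan is to define $\widetilde W$ by the presentation $\langle \widetilde s_i \mid \widetilde s_i^2 = 1,\, (\widetilde s_i \widetilde s_j)^{m_{ij}} = 1 \rangle$, where $m_{ij}$ is determined by $\measuredangle(H_i,H_j) = \pi/m_{ij}$ (and there is no braid relation when $H_i, H_j$ do not intersect), and to show that the natural map $\varphi \colon \widetilde W \to W(P)$ sending $\widetilde s_i \mapsto s_i$ is an isomorphism together with the tessellation $\HH^n = \bigcup_{w\in \widetilde W} \varphi(w)\overline{P}$. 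The map $\varphi$ is well-defined because two reflections in $\Iso(\HH^n)$ across hyperplanes meeting at angle $\pi/m$ generate a dihedral group of order $2m$, and across disjoint hyperplanes generate an infinite dihedral group, so the defining relations of $\widetilde W$ hold in $W(P)$.

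The heart of the proof is the tiling statement. For each $w\in \widetilde W$ set $T_w := \varphi(w)\overline{P}$. By induction on the word length in the generators $\{\widetilde s_i\}$, I would show that the tiles $T_w$ have pairwise disjoint interiors, that adjacent tiles $T_w$ and $T_{w\widetilde s_i}$ are related by the reflection $\varphi(w) s_i \varphi(w)^{-1}$ across the shared wall $\varphi(w) H_i$, and that $\HH^n$ is the union of the $T_w$ (using that $\bigcup T_w$ is open, closed, and nonempty in the connected space $\HH^n$ once one knows the reflections across walls of any tile $T_w$ again belong to $W(P)$). The injectivity $T_w = T_{w'} \Rightarrow w = w'$ is the central difficulty: the standard route is to combine a gallery argument with Tits' solution to the word problem for Coxeter groups, identifying a minimal word for $w$ with a minimal gallery from $P$ to $T_w$ and using the exchange condition to conclude that no nontrivial word in $\widetilde W$ can send $\overline P$ back to itself.

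Once the tessellation is in hand, the remaining items follow fairly directly. Item (iii) is immediate: every orbit meets $\overline{P}$, and interior points of $P$ belong only to the tile $T_1$. For (ii), any compact $K \subset \HH^n$ meets only finitely many tiles, so stabilizers are finite, orbits are locally finite, and the three conditions of properness are straightforward to verify from the tessellation. Item (i) follows from (ii): if $\varphi(w_n) \to \id$, then $\varphi(w_n)$ eventually maps an interior point of $P$ into $P$, forcing $\varphi(w_n) = \id$ by the fundamental domain property. Item (iv) is the injectivity of $\varphi$, which is a restatement of the uniqueness part of the tessellation. Item (v) is local: at any $x \in \overline{P}$, a small neighbourhood of $x$ is tiled by the finitely (resp. countably, at an ideal vertex) many tiles $T_w$ with $x \in T_w$, the walls of $P$ through $x$ cut out a spherical or Euclidean Coxeter arrangement, and the classification for spherical and affine reflection groups identifies $W(P)_x$ with the subgroup generated by reflections across those walls. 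The main obstacle throughout is establishing injectivity of $\varphi$ via the tessellation, which is exactly why this statement is usually cited rather than re-proved.
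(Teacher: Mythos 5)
The paper does not prove this theorem at all: it is imported verbatim from Vinberg's survey (the citation on p.~199) and used as a black box, so there is no in-paper argument to compare yours against. Your proposal is the standard proof of this classical result, essentially the development in Vinberg or in Davis's book (the basic construction $\mathcal{U}(\widetilde W,\overline{P})$ together with the developing map), and as an outline it is sound: the reduction of (i)--(v) to the tessellation-with-simple-transitivity statement is correct, the derivations of (ii), (iii) and (iv) from it are routine as you say, and you correctly identify the injectivity of $\varphi$ as the genuine content.

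Two points are glossed more than your phrasing suggests. First, the ``open, closed, nonempty'' argument for $\bigcup_w T_w = \HH^n$ needs \emph{local finiteness} of the family of tiles and the \emph{completeness} of $\HH^n$; closedness of the union is exactly where completeness enters (this is why the analogous statement fails for incomplete hyperbolic structures), and local finiteness near a codimension-$2$ face is where the submultiple-of-$\pi$ angle condition on the Coxeter polyhedron is actually used --- finitely many tiles must close up around each such face. Your sketch invokes the angle condition only for well-definedness of $\varphi$, but it does double duty. Second, item (v) at ideal points does not follow from the classification of spherical and affine groups by a purely local argument alone: at an ideal vertex infinitely many tiles accumulate, and one passes to a horosphere centred at the ideal point, on which the walls through that point cut out a Euclidean reflection group; this horospherical cross-section step should be made explicit. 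Neither issue is a wrong turn --- both are handled in the sources you would cite --- but they are the places where a write-up of your sketch would need real work beyond the gallery/Tits machinery you name.
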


\begin{Cor}
The fundamental domains $wP$ with $w\in W(P)$ cover the space $\HH^n$ and there is a bijection between $W(P)$ and the set of fundamental domains $wP$. Namely,
\[
w\mapsto wP\quad \text{with}\; w\in W(P).
\]
\end{Cor}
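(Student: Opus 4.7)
The plan is to deduce both assertions directly from parts (iii) and (v) of Vinberg's theorem, treating them as essentially unpacking the definition of a strict fundamental domain.

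For the covering statement, I would start from part (iii): $P$ is a strict fundamental domain, meaning that every $W(P)$-orbit meets $P$ in exactly one point. In particular, for every $x \in \HH^n$ there exists $w \in W(P)$ with $wx \in P$, equivalently $x \in w^{-1}P$. Since $w^{-1}$ ranges over $W(P)$ as $w$ does, this gives $\HH^n = \bigcup_{w \in W(P)} wP$.

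For injectivity of the map $w \mapsto wP$, suppose $wP = w'P$ for some $w, w' \in W(P)$. Fix an interior point $x_0$ of $P$, that is, a point lying on none of the walls $H_i$. Then $w^{-1}w' x_0 \in w^{-1}w' P = P$, while also $x_0 \in P$, and the two points lie in a common $W(P)$-orbit. The strict fundamental domain property (iii) forces $w^{-1}w' x_0 = x_0$, so $w^{-1}w'$ lies in the stabilizer $W(P)_{x_0}$. By part (v) of Vinberg's theorem, $W(P)_{x_0}$ is generated by reflections across the walls of $P$ that contain $x_0$; since $x_0$ was chosen in the interior, no such walls exist, so $W(P)_{x_0}$ is trivial. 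Hence $w = w'$.

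There is no real obstacle here — the argument is a direct corollary once one has Theorem 1.2 in hand. The only minor subtlety is ensuring the existence of an interior point of $P$, which follows from the definition of a convex polyhedron as having non-empty interior. Surjectivity of $w \mapsto wP$ onto the set of fundamental domains $\{wP : w \in W(P)\}$ is tautological, so together with the injectivity above we obtain the claimed bijection.
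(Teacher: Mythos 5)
Your proof is correct, and it matches the paper's treatment: the paper states this corollary without proof, as an immediate consequence of Vinberg's theorem, and your argument (covering from the strict-fundamental-domain property (iii), injectivity via an interior point together with the stabilizer description (v)) is precisely the intended unpacking. No gaps; the existence of an interior point is indeed guaranteed by the paper's definition of a convex polyhedron, as you note.
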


\begin{Def}
We call a discrete subgroup of $Iso(\HH^n)$ generated by hyperplane reflections \textit{hyperbolic reflection group} if the underlying Coxeter system is \ina. In the following, hyperbolic reflection groups will be abbreviated often with the underlying Coxeter system $(W,S)$, where the set $S$ is the set of hyperplane reflections $s_i$ corresponding to a finite set of hyperplanes $\{H_1, \dots H_n\}$ in $\HH^n$. 
\end{Def}

A good general reference for hyperbolic reflection groups is chapter $5$ in \cite{Vinberg1993}. 

\begin{Rem}
Isomorphic groups can be generated by different hyperplane arrangements in $\HH^n$. Some Coxeter groups cannot be embedded as discrete subgroups generated by hyperplane reflections in $Iso(\HH^n)$ for any $n\in \NN$ (see \cite{Felikson2005}). Moreover, hyperbolic Coxeter polytopes and polyhedra aren't classified completely yet. To our knowledge, there are no general criteria for Coxeter groups that specify when the group is isomorphic to a hyperbolic reflection group. Hence, we consider hyperbolic reflection groups and not Coxeter groups with additional properties. Compact hyperbolic Coxeter polytopes do not exist in dimensions higher than $29$ (see \cite{Vinberg1981}). Finite volume hyperbolic Coxeter polytopes do not exist in dimensions higher than $995$ (see \cite{Khovanskii1986} and \cite{Prokhorov1987}). 
\com{Examples for first sentence}
\end{Rem}

Before we give examples, we formulate a theorem from \cite{Duszenko2011}, which combines a list of results of chapter V in \cite{Bourbaki2007}. It states that minimal \ina Coxeter groups can be represented as hyperbolic reflection groups that have a simplex as a fundamental domain. 

\begin{Def}
A \textit{minimal \ina}Coxeter group is a \ina Coxeter group that only has direct products of spherical and affine reflection groups as proper special subgroups. 
\end{Def}
\begin{Rem}
Every \ina
Coxeter group has a minimal \ina special subgroup; it is any special
subgroup minimal with respect to inclusion among the \ina ones.
\end{Rem}

\begin{Thm}[Duszenko, \cite{Duszenko2011}]\label{Thm: minimal non-affine}
Every minimal \ina
Coxeter system $(W, S)$ can be faithfully represented as a discrete reflection group
acting on the hyperbolic space $\HH^n$, where $n=|S|-1$. The elements of $S$ act as
reflections with respect to codimension $1$ faces of a simplex $\Delta$ (some of the vertices
of $\Delta$ might be ideal).
\end{Thm}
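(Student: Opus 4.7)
The proof strategy is to construct the representation using the standard Tits/geometric representation of Coxeter groups together with a careful analysis of the signature of the canonical bilinear form, and then to recognize the resulting action via Vinberg's theorem (Theorem \ref{Thm: Polyhedron reflection group is Coxeter group, Vinberg+Davis}).

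First I would set up the geometric representation. Let $V = \RR^S$ with basis $\{e_s\}_{s\in S}$ and define the symmetric bilinear form
\[
B(e_s,e_t) = -\cos\!\left(\tfrac{\pi}{m_{st}}\right),
\]
with the convention $\cos(\pi/\infty)=1$, so that $B(e_s,e_s)=1$. Then $W$ acts faithfully on $V$ by sending each $s\in S$ to the $B$-reflection $\sigma_s(x) = x - 2B(x,e_s)e_s$, and this representation preserves $B$.

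Second, and this is the technical heart, I would determine the signature of $B$. The key is to exploit minimality. For any proper subset $S'\subsetneq S$, the special subgroup $\langle S'\rangle$ is a direct product of spherical and affine reflection groups, and for such products the restriction $B|_{\mathrm{span}(e_s : s\in S')}$ is positive semidefinite (positive definite for the spherical factors, with a one-dimensional radical per affine factor). Thus every proper principal submatrix of the Gram matrix of $B$ is positive semidefinite. By the Cauchy interlacing / Jacobi principle applied to the symmetric matrix of $B$, this forces $B$ to have at most one negative eigenvalue. Since $(W,S)$ itself is neither spherical (so $B$ is not positive definite) nor affine (so $B$ is not positive semidefinite), $B$ must have signature exactly $(n,1)$ with $n = |S|-1$, and in particular $B$ is nondegenerate, so the $e_s$ form a basis of $V$.

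Third, I would transplant the picture to the hyperboloid model. Identify $(V,B)$ with $\RR^{n,1}$; then each $e_s$ is a unit spacelike vector, and $H_s := e_s^\perp \cap \HH^n$ is a hyperplane of $\HH^n$. Formula \eqref{Eq: Formular reflection on hyperplane} shows that $\sigma_s$ restricted to $\HH^n$ is exactly the hyperbolic reflection across $H_s$. Choose for each $s$ the half-space $H_s^-$ on which $B(\cdot,e_s)\le 0$ and set
\[
\Delta = \bigcap_{s\in S} H_s^-.
\]
Because the $e_s$ are linearly independent and pairwise have $B(e_s,e_t)\le 0$, the dual cone in $\RR^{n,1}$ is simplicial, and its intersection with $\HH^n$ (possibly together with some ideal vertices) is a hyperbolic simplex bounded by exactly the $n+1$ walls $H_s$. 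By construction the dihedral angle between $H_s$ and $H_t$ that meet is $\pi/m_{st}$, so $\Delta$ is a Coxeter simplex.

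Finally, Vinberg's Theorem \ref{Thm: Polyhedron reflection group is Coxeter group, Vinberg+Davis} applied to $\Delta$ yields a discrete subgroup $W(\Delta)\le \Iso(\HH^n)$ generated by the reflections $\{\sigma_s\}$, whose Coxeter presentation coincides with that of $(W,S)$; hence the natural map $W\to W(\Delta)$ is an isomorphism, giving the faithful geometric realization claimed.

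The main obstacle is the signature computation in the second step: one must genuinely use that minimality forbids any non-spherical, non-affine proper special subgroup, and combine this with the classification of positive semidefinite Coxeter matrices to rule out nullity greater than one and more than one negative eigenvalue. Everything afterwards is a fairly direct translation between the algebraic Tits representation and the hyperbolic picture set up in Section~\ref{section 1}.
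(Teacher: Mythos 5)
You should first note that the paper itself contains no proof of this statement: it is quoted verbatim from \cite{Duszenko2011}, and the paper explicitly remarks that Duszenko's theorem merely combines a list of results from chapter V of \cite{Bourbaki2007}. Your proposal reconstructs exactly that standard assembly --- Tits representation, signature analysis of the canonical form $B$ exploiting minimality, the simplicial dual cone, and recognition of the action through Vinberg's theorem (Theorem \ref{Thm: Polyhedron reflection group is Coxeter group, Vinberg+Davis}) --- so there is no divergence of method to report. Your first, third and fourth steps are sound modulo one check you leave implicit: to know that the dual cone meets $\HH^n$ in a simplex whose vertices are at worst ideal (rather than a polyhedron with spacelike ``vertices'' outside the light cone), you must verify that each vertex ray $v_s$, defined by $B(v_s,e_t)=0$ for $t\neq s$, satisfies $B(v_s,v_s)\leq 0$; this is precisely the dichotomy that the rank-$n$ special subgroup $\langle S\setminus\{s\}\rangle$ is spherical (timelike vertex) or has an affine factor (lightlike vertex), so minimality enters a second time here and deserves a sentence in your write-up.

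The one genuine gap sits at the point you yourself call the technical heart. Interlacing plus the failure of positive semidefiniteness yields exactly one negative eigenvalue, but it does \emph{not} yield nondegeneracy: a spectrum $\lambda_1<0=\lambda_2\leq\cdots\leq\lambda_{n+1}$ is perfectly compatible with every proper principal submatrix being positive semidefinite, and your concluding plan to ``rule out nullity greater than one'' aims at the wrong target --- the dangerous case is nullity exactly one, where $B$ has signature $(n-1,1)$ plus a radical, $(V,B)$ cannot be identified with $\RR^{n,1}$, and your third step collapses. The fix is short and stays inside your framework: suppose $0\neq v\in\operatorname{rad}B$ and let $u$ be an eigenvector for $\lambda_1<0$; on $U=\operatorname{span}\{u,v\}$ one computes $B(au+bv,au+bv)=a^2\lambda_1\lVert u\rVert^2\leq 0$, with equality only if $a=0$. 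For each $s\in S$ the coordinate hyperplane $V_s=\operatorname{span}(e_t : t\neq s)$ meets $U$ in dimension at least one, and $B$ is nonnegative on $V_s$ by your semidefiniteness observation, so $B$ vanishes on $U\cap V_s$, forcing $U\cap V_s=\RR v$ and hence $v\in V_s$; as this holds for every $s$, all coordinates of $v$ vanish, a contradiction. (If you instead pursue your classification idea --- a radical vector supported on a proper subdiagram exhibits an affine component whose everywhere-positive radical vector pairs strictly negatively with any adjacent generator --- then you must also observe that minimality forces $(W,S)$ to be irreducible, since a nontrivial splitting would display $W$ itself as a direct product of spherical and affine factors; your sketch nowhere records this.)
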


\begin{example}
According to Theorem \ref{Thm: minimal non-affine}, there exists a hyperbolic reflection group with a simplex as a fundamental domain isomorphic to $(W,S)$ for every minimal \ina Coxeter system $(W,S)$. The tessellations of $\HH^2$ in the Poincaré ball model in Figure \ref{Fig: Tessellation} correspond to hyperbolic reflection groups, which are isomorph to minimal \ina Coxeter systems.  
\begin{figure}
\begin{minipage}[h]{0.45\textwidth}
\includegraphics[scale=0.5]{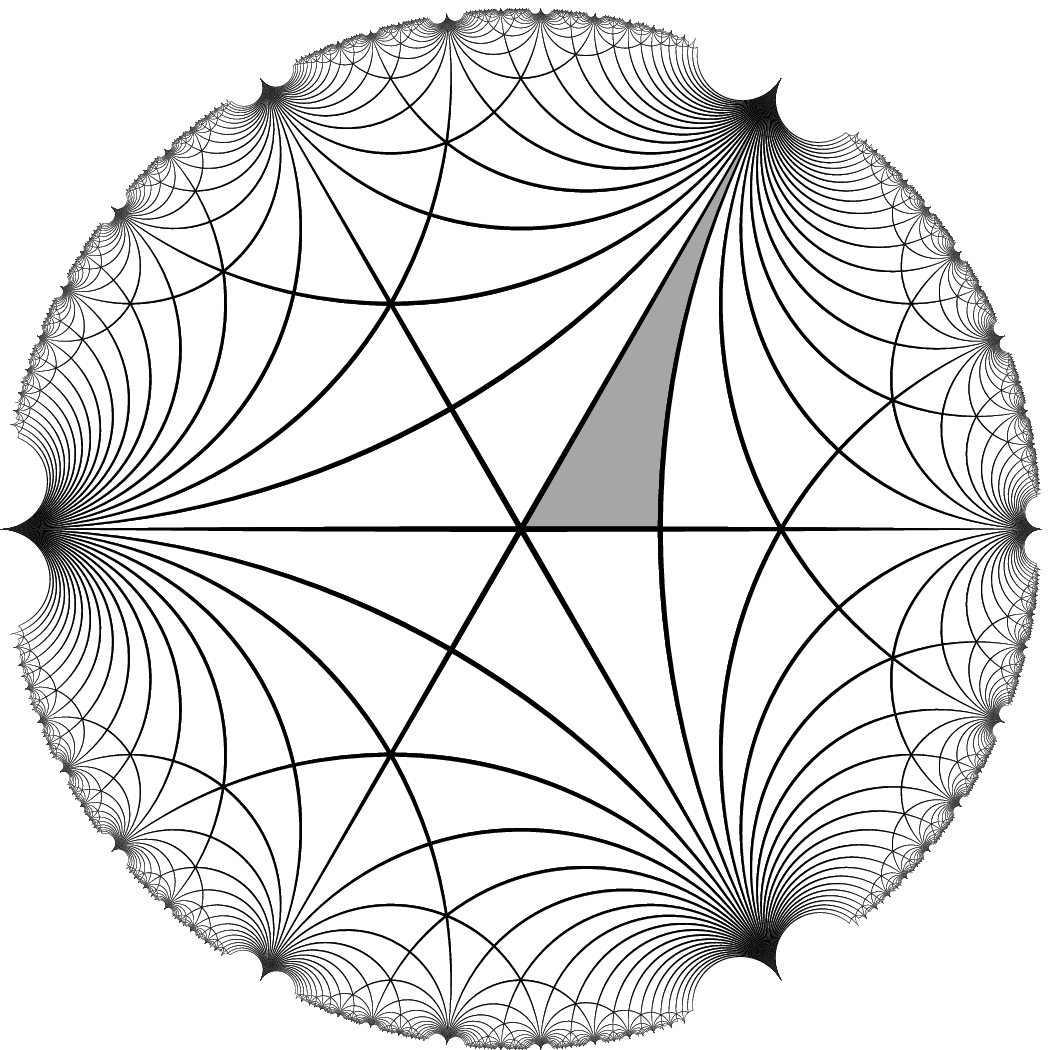}
\end{minipage}
\hspace{10pt}
\begin{minipage}[h]{0.45\textwidth}
\includegraphics[scale=0.5]{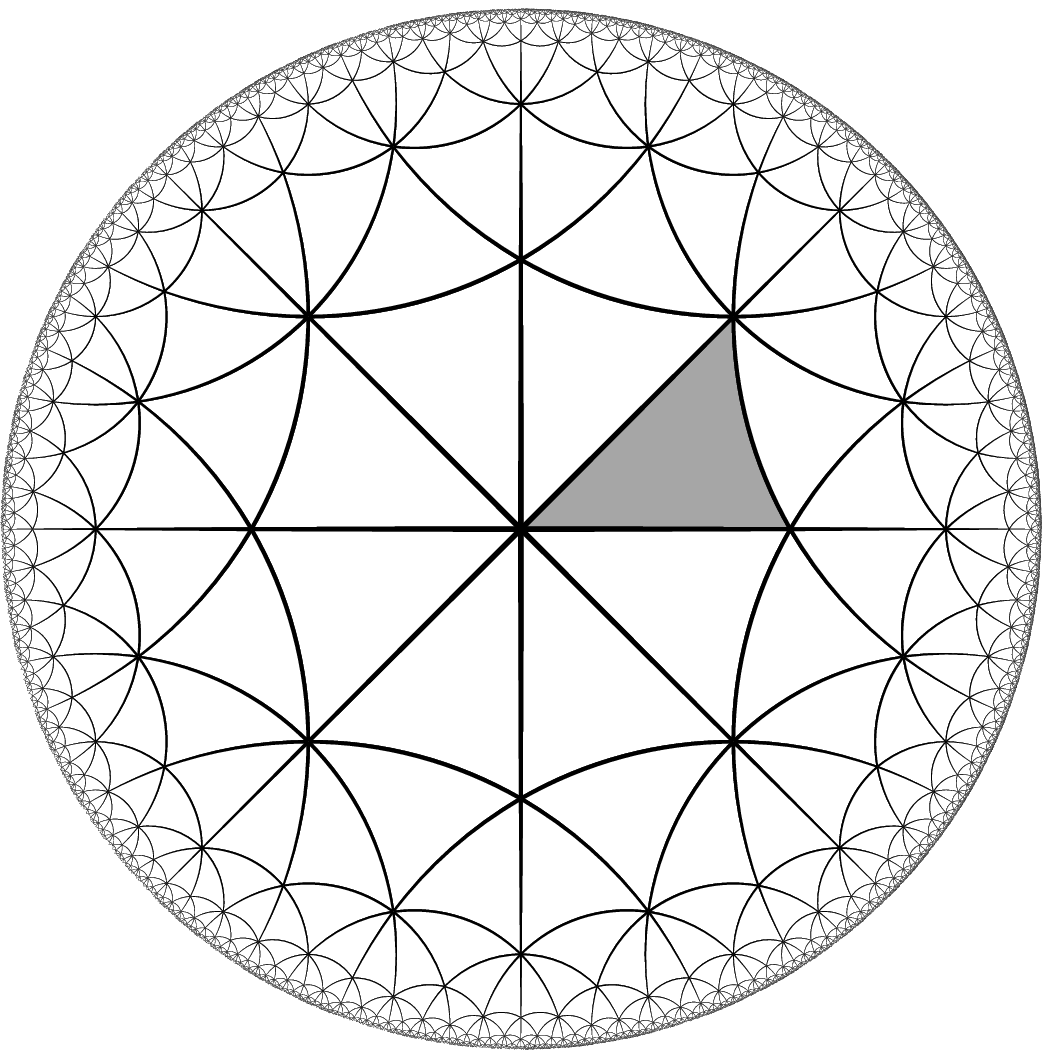}
\end{minipage}
\caption{Tessellations in the Poincaré ball model, fundamental domains marked grey}
\label{Fig: Tessellation}
\end{figure}
\vspace*{7pt}

The Coxeter system corresponding to the left figure has the presentation
\[
\langle s_1, s_2, s_3\mid s_i^2= (s_1s_2)^2= (s_1s_3)^3= (s_1s_3)^\infty= \id \rangle
\]
and the system corresponding to the right figure
\[
\langle s_1, s_2, s_3\mid s_i^2= (s_1s_2)^3= (s_1s_3)^4= (s_1s_3)^4= \id \rangle.
\]
\end{example}

\subsection{Abstract reflections}
From now on, let $(W,S)$ be a Coxeter system.
\begin{Def}
The conjugates of the standard generators in $S$ are called \textit{reflections}. A reflection is an involution and the set of reflections \[R:=\{wsw^{-1}\;|\; w\in W, \; s\in S\}\] is a generating set for the group $W$. Thus, analogously to the word length, we can define the \textit{reflection length} 
\[
\l_R: W\to\mathbb{N}\, ; \qquad w\mapsto \min \{n\in \NN\mid w\in R^n\}
\] 
with $R^n= \{ r_1\cdots r_n\in W\mid r_i\in R\}$. The identity element $\id$ has reflection length zero. We call minimal reflection factorisations of an element \textit{$R$-reduced} and minimal standard generator factorisations \textit{$S$-reduced}.
\end{Def}

The following theorem is the only way known to compute the reflection length in an arbitrary Coxeter group. Roughly speaking, it shows that the reflection length of an element can be understood as a measure of how many generators an element differs from the identity in the group.

\begin{Thm}[Dyer, \cite{Dyer2001}]\label{Thm: Dyer's Theorem}
Let $w = s_1\cdots s_n$ be a $S$-reduced expression for $w\in W$. Then $l_R(w)$ is equal to
the minimum of the natural numbers $p$ for which there exist $1\leq i_1 < \cdots < i_p \leq n$ such that $s_1 \cdots \hat{s_{i_1}}\cdots \hat{s_{i_p}}\cdots s_n= \id$, where a hat over a factor indicates its omission.
\end{Thm}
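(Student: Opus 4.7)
The plan is to prove the two inequalities $l_R(w) \leq p^*$ and $l_R(w) \geq p^*$ separately, where $p^*$ denotes the minimum number of deletions from the reduced expression $s_1 \cdots s_n$ yielding the identity.

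For the first inequality I would argue by induction on $p^*$, using the conjugation identity
\[
s_1 \cdots s_{i-1} s_i s_{i+1} \cdots s_n = \bigl((s_1 \cdots s_{i-1}) s_i (s_1 \cdots s_{i-1})^{-1}\bigr) \cdot s_1 \cdots s_{i-1} s_{i+1} \cdots s_n
\]
to extract a reflection on the left while removing a single letter from the tail. Applying this once for each of the $p^*$ positions that can be deleted realises $w$ as a product of $p^*$ reflections, so $l_R(w) \leq p^*$. The reflections produced by this iteration are conjugates $u s_{i_k} u^{-1}$ of the generators, hence genuinely lie in $R$.

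For the reverse inequality I would invoke the strong exchange condition: if $r \in R$ and the Coxeter length satisfies $\ell(rw) < \ell(w)$, then $rw$ is obtained from any reduced expression of $w$ by deleting exactly one letter. Writing a minimal reflection factorisation $w = r_1 r_2 \cdots r_{l_R(w)}$, one successively multiplies by the $r_j$; each multiplication that reduces $\ell$ corresponds, via strong exchange, to a single deletion from $s_1 \cdots s_n$, and after all $l_R(w)$ multiplications we reach the identity.

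The main obstacle is that multiplying by an individual $r_j$ may \emph{increase} the Coxeter length, in which case strong exchange does not apply directly and one cannot naively produce a deletion. To circumvent this, I would reorder the $r_j$ so that every intermediate step strictly decreases $\ell$, i.e.\ realise the minimal factorisation as a chain in the absolute (reflection) order terminating at $w$. Dyer formalises this using reflection subgroups and the $T$-cocycle on $W$; an equally natural route is to first verify that $p^*$ is invariant under braid moves, via Matsumoto's theorem, and then check the claimed equality for one conveniently chosen reduced expression.
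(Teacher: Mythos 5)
The paper offers no proof of this statement -- it is quoted verbatim from Dyer's paper -- so your attempt has to stand on its own, and only half of it does. The inequality $l_R(w)\leq p^*$ is correct and complete as sketched: the conjugation identity peels off one reflection per deleted letter, each peeled factor is a conjugate of a generator and hence lies in $R$, and iterating over the $p^*$ deletion positions exhibits $w$ as a product of $p^*$ reflections. (Note this direction needs no induction and holds for arbitrary, not necessarily reduced, expressions.)

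The converse inequality is where the genuine gap sits, and your own diagnosis understates it. First, even in the good case $\ell(rw)<\ell(w)$ (writing $\ell$ for the $S$-length), strong exchange deletes one letter from $s_1\cdots s_n$, but the resulting word of length $n-1$ is reduced only when $\ell(rw)=\ell(w)-1$; a reflection can drop $\ell$ by much more than $1$. After the first step you are therefore deleting letters from a possibly non-reduced word, while $p^*$ and any induction hypothesis are defined via \emph{reduced} expressions; repairing this by passing to a reduced subword via the deletion condition costs extra deletions that inflate the count past $l_R(w)$. Second, your proposed fix -- reorder the $r_j$ so that every intermediate multiplication strictly decreases $\ell$ -- is precisely the nontrivial content of the theorem, and you give no argument for its existence; worse, it is not what a chain in absolute order provides. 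Absolute order is defined by additivity of $l_R$ along the chain, so it controls the reflection length of the partial products and says nothing about monotonicity of the Coxeter length $\ell$, which is the quantity strong exchange needs. Finally, the Matsumoto fallback is circular: braid-invariance of $p^*$ only shows that $p^*$ is well defined independently of the chosen reduced expression (a point worth recording, but a side issue), and ``checking the equality on one conveniently chosen reduced expression'' is then exactly the original theorem, since no reduced expression is distinguished in a way that trivializes it. Dyer's actual argument develops the machinery you name-drop -- the cocycle $N(w)=\{t\in R \mid \ell(tw)<\ell(w)\}$ and dihedral reflection subgroups -- precisely to produce, at each step, a reflection that simultaneously decreases $l_R$ by one and admits a deletion compatible with reducedness; that bookkeeping is the substance your sketch omits.
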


\begin{Rem}
In spherical and affine reflection groups there exist formulas for the reflection length and $l_R$ is a bounded function (see \cite{Carter1972} and \cite{Lewis2018}). The reflection length is additive on direct products (see \cite{Mccam2011}, Proposition 1.2). In contrast to this, \ina Coxeter groups have unbounded reflection length.  
\end{Rem}

\begin{Thm}[Duszenko, \cite{Duszenko2011}]\label{Thm: Duszenko LR unbpunded}
Let $(W,S)$ be an \ina Coxeter system. The reflection length $l_R$ is an unbounded function on $W$.
\end{Thm}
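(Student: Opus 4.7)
The proof proceeds in three stages. First, I reduce to a minimal \ina Coxeter system. By the remark preceding the theorem, every \ina Coxeter system $(W,S)$ contains a minimal \ina special subgroup $(W',S')$. A consequence of Dyer's theorem (stated above) is that for $w\in W'$ the reflection length $l_R(w)$ computed in $W$ equals the reflection length of $w$ computed inside $W'$ using its own reflections: every $S$-reduced expression of $w$ lies in $S'$, so the subword omissions realising the reflection length occur within $S'$. Hence unboundedness on $W'$ implies unboundedness on $W$, and I may assume $(W,S)$ is minimal. By Theorem \ref{Thm: minimal non-affine}, $(W,S)$ then acts as a discrete reflection group on $\HH^n$ with $n=|S|-1$ and a simplex $\Delta$ as strict fundamental domain.

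Second, since $(W,S)$ is \ina, the action on $\HH^n$ is non-elementary (it fixes no point of $\overline{\HH}{}^n$), so the classical theory of discrete isometry groups of hyperbolic space provides a hyperbolic element $g\in W$ with axis $L\subset\HH^n$ and positive translation length $\tau(g)>0$; concretely, a suitable power of a Coxeter element of $W$ should work. The aim is to show that $l_R(g^k)\to\infty$ as $k\to\infty$. Fix a basepoint $p\in L$, so that $d(p,g^k p)=k\tau(g)\to\infty$. Given any factorisation $g^k=r_1\cdots r_m$ with $r_i\in R$ across hyperplanes $H_{r_i}$, the triangle inequality together with the identity $d(q,r_i q)=2\,d(q,H_{r_i})$ for any $q\in\HH^n$ yields the displacement bound
\begin{equation*}
d(p,g^k p)\;\leq\;2\sum_{i=1}^{m} d(p,H_{r_i}).
\end{equation*}

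The hardest step is to convert this inequality into a lower bound on $m$. The main obstacle is that the walls $H_{r_i}$ can lie arbitrarily far from $p$, so the displacement bound alone does not force $m\to\infty$. I would resolve this by combining it with the discreteness of the tessellation and with the stronger constraint that $r_1\cdots r_m$ must realise the specific translation $g^k$, not merely displace $p$ by the correct amount. A natural strategy is a walls-crossed argument along $L$: the axis segment $[p,g^k p]$ of length $k\tau(g)$ meets a linearly growing number of tessellation walls, and a careful analysis exploiting that reflections across walls far from $L$ cannot contribute usefully to a translation along $L$ should furnish an unbounded lower bound on $m$. An alternative route would use the CAT($-1$) geometry of $\HH^n$ directly to show that bounded-$l_R$ elements form a constrained subset of $\Iso(\HH^n)$ incompatible with the existence of hyperbolic translations of arbitrarily large translation length inside $W$. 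Either way, bridging the combinatorics of the Coxeter group and the hyperbolic geometry of its action is where the real technical content of the proof resides.
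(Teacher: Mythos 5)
First, note that the paper does not prove this statement at all: it is imported verbatim from \cite{Duszenko2011}, so your attempt can only be measured against Duszenko's cited argument. Your Stage~1 is correct and coincides with the standard (and Duszenko's) first step: for $w$ in a standard parabolic subgroup $W'=\langle S'\rangle$, every $S$-reduced word for $w$ uses only letters of $S'$, a subword of such a word equals $\id$ in $W$ if and only if it does in $W'$, so Dyer's theorem gives $l_{R}(w)=l_{R'}(w)$ and the reduction to a minimal \ina system realized as a hyperbolic simplex group is legitimate. The fatal problem is Stages~2--3. The principle you hope to exploit --- that large displacement $d(p,g^kp)$ forces large reflection length --- is not merely hard to establish, it is false. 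Whenever $W$ contains reflections $r,r'$ across disjoint walls (and a hyperbolic simplex group always does), $g=rr'$ is a hyperbolic or parabolic isometry with $d(p,g^kp)\to\infty$, yet $l_R(g^k)\leq 2$ for all $k$: inside $\langle r,r'\rangle\cong D_\infty$ one has $(rr')^k=r\cdot\bigl(r'(rr')^{k-1}\bigr)$, and the second factor, being an odd-length element of a dihedral group, is itself a reflection. So your candidate witnesses $g^k$ (in particular powers of Coxeter elements) need not work; indeed the paper's own introduction warns that powers of Coxeter elements have unbounded reflection length only for groups without braid relations among three distinct generators (\cite{Drake2021}), and ``in general, this does not hold.''

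Both of your fallback routes collapse for the same structural reason. The wall-crossing count along $[p,g^kp]$ grows linearly, but that bounds the word length $l_S(g^k)$ from below, not $l_R$: the $D_\infty$ example crosses on the order of $k$ walls while $l_R\leq 2$. And no estimate carried out in $\Iso(\HH^n)$ alone can succeed, because by the hyperbolic Cartan--Dieudonn\'e theorem every isometry of $\HH^n$ is a product of at most $n+1$ hyperplane reflections; unboundedness of $l_R$ is therefore invisible to the ambient geometry and is entirely a phenomenon of the discreteness and local finiteness of the specific mirror family $\{H_{\tilde r}\mid \tilde r\in R\}$ --- your proposal never uses this beyond invoking ``discreteness'' as a hope. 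Duszenko's actual proof, after the same reduction, exploits exactly this discreteness via a degeneration argument for sequences of mirrors in the compactified ball (sequences of mirror spheres either subconverge to a mirror or collapse to boundary points), rather than any displacement or translation-length estimate. As written, your proposal establishes only the reduction step; the entire technical content of the theorem is missing, and the two mechanisms you sketch to supply it are each refuted by the two-reflection translation above.
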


\begin{Rem}
Let $(W,S)$ be a hyperbolic reflection group in $\HH^n$ with fundamental polyhedron $P$, walls $\{H_1, \dots, H_m\}$ and generating set $S=\{s_1, \dots, s_m\}$.  The reflection $ws_iw^{-1}\in R$ ($w\in W$, $1\leq i\leq m$) acts on $\HH^n$ as the hyperplane reflection across $wH_i$. For each polyhedron $vP$ with $v\in W$, the minimal number of hyperplane reflections across hyperplanes in $\{wH_i\subseteq \HH^n\mid w\in W,1\leq i\leq m\}$ that suffices to reflect $vP$ onto $P$ is exactly the reflection length of $v$. For a reflection $r\in R$, we denote the corresponding hyperplane in $\HH^n$ with $H_r$. 
\end{Rem}

\subsection{Hyperbolic hyperplanes and their boundary}\label{Subsection: Hyperbolic hyperplanes and their boundary}

\begin{Def}
Two geodesic rays $\gamma, \gamma': [0,\infty)\to \mathbb{X}$ in a metric space $\mathbb{X}$ are called \textit{asymptotic} if $\sup \{d(\gamma (x), \gamma'(x))\;|\; x\in [0,\infty) \}<\infty$. This is an equivalence relation on the set of geodesic rays.\par
The \textit{visual boundary} $\partial\mathbb{X}$ 
is the the set of equivalence classes of geodesic rays. We call the elements \textit{ideal points} and denote them with $\gamma(\infty)$. The union of the space and its visual boundary is denoted $\overline{\mathbb{X}}:=\mathbb{X} \cup \partial \mathbb{X} $.
\end{Def}

\begin{Rem}
Two geodesic rays are asymptotic if and only if their Hausdorff distance is finite.
In our setting, the visual boundary is the same as the Gromov boundary because $\HH^n$ is a CAT$(0)$ visibility space.
\end{Rem}

\begin{example}
In the Poincaré ball model, the visual boundary $\partial \mathbb{H}^n$ is exactly the unit sphere $\mathbb{S}^{n-1}$. In the hyperboloid model, if two geodesic rays are asymptotic, then the intersection of the corresponding subspaces in $\RR^{n,1}$ is a line contained in the boundary of the light cone $\{ v=(v_1,\dots , v_{n+1})\in \mathbb{R}^{n,1}\;|\; \langle v|v\rangle = 0, v_{n+1}>0 \}$ (cf. \cite{Bridson2009}, pp. 262). 
\end{example}

We equip $\overline{\mathbb{X}}$ with the cone topology.

\begin{Def}\label{Def: cone topology}
Fix a point $x_0$ in a CAT$(0)$ metric space $(\mathbb{X},d)$. Let $c$ be a geodesic ray with $c(0)=x_0$ and $r, \epsilon>0$ be real numbers. Let $p_r$ be the projection of $\overline{\mathbb{X}}$ onto the closed ball $\overline{B}(c(0), r)$: For $x\notin B(c(0), r)$ the projection $p_r(x) $ is the point in the segment $[x_0, x]$, geodesic ray respectively, with distance $r$ from $x_0$.
We define 
\[
U(c, r, \epsilon):= \{ x\in \overline{\mathbb{X}}\;|\; d(x, c(0))>r, \; d(p_r(x), c(r))<\epsilon\}.
\]
The sets $U(c, r, \epsilon)$ form a neighbourhood basis for $c(\infty)$ and the set of all open balls $B(x, r)$ together with all sets of the
form $U(c, r, \epsilon)$, where c is a geodesic ray with $c(0)= x_0$, is a basis of the \textit{cone topology} on $\overline{\mathbb{X}}$. The cone topology is independent of the choice of the point $x_0$. 
\end{Def}

Now, we turn to geodesic subspaces of $\HH^n$ of arbitrary dimension in the hyperboloid model. Geodesic lines in $\HH^n$ are exactly the non-empty intersections of $2$-dimensional vector subspaces of $\RR^{n,1}$ with $\HH^n$. The $m$-dimensional geodesic subspaces of $\HH^n$ are the non-empty intersections of $(m+1)$-dimensional vector subspaces of $\RR^{n,1}$ with $\HH^n$ ($m\leq n$). These subspaces are isometric to $\HH^m$.\par
For a subspace $H\subseteq \HH^n$ we denote the corresponding subspace in $\RR^{n,1}$ with $V_{H}$.

\begin{Def}
Two 
disjoint arbitrary dimensional geodesic subspaces $H_1$ and $H_2$ of $\HH^n$  are called \textit{ultra-parallel} if there exist no geodesic rays $\gamma_1\subseteq H_1$ and $\gamma_2\subseteq H_2$ that are asymptotic. In other words, the subspaces do not have a common point in $\partial\HH^n$. 
\end{Def}
\begin{Def}\label{Definition: intersecting in a right angle}
Two intersecting geodesic subspaces $H_1, H_2\subseteq \HH^n$ are \textit{intersecting in a right angle} if their intersection is non-empty in $\HH^n$, and there exist non-trivial vectors $u_1, \dots, u_m$ with $ m>0$ in the orthogonal complement $V^\bot_{H_1}\subseteq \RR^{n,1}$ such that  
\[
\langle (V_{H_1}\cap V_{H_2})\cup\{u_1,\dots,u_m\} \rangle=V_{H_2}.
\]

A \textit{perpendicular} of a geodesic subspace $H\subseteq\HH^n$ is a geodesic line that intersects $H$ in a right angle as a subspace.
\end{Def}

\begin{Rem}
The definition above is symmetric. By definition $V_{H_1}$ is contained in $\langle\{ u_1,\dots,u_m\}\rangle^\bot$ and we have $V^\bot_{H_2}= \langle\{ u_1,\dots,u_m\}\rangle^\bot \cap (V_{H_1}\cap V_{H_2})^\bot$. So we can select vectors $\{v_1, \dots, v_n\}\subseteq V^\bot_{H_2}$ such that 
\[
\langle (V_{H_1}\cap V_{H_2})\cup\{v_1, \dots, v_n\} \rangle=V_{H_1}.
\]
\end{Rem}

\begin{example}
Let $H_1$ be a hyperplane and $H_2$ be a geodesic line with $H_1\cap H_2\neq \emptyset$ and $H_2\nsubseteq H_1$. The subspaces $V_{H_1}\cap V_{H_2}$ and $V_{H_1}^\bot$ are one-dimensional. $H_2$ is a perpendicular of $H_1$ if and only if $V_{H_1}^\bot\subseteq V_{H_2}$. \par
Two hyperplanes are intersecting in a right angle if and only if the corresponding orthogonal unit vectors are orthogonal.

\end{example}

\section{Ultra-parallel theorem for subspaces in $\mathbb{H}^n$}\label{Section: Ultra-parallel theorem for subspaces}
David Hilbert proved the following theorem based on his system of axioms for $\mathbb{H}^2$. 

\begin{Thm}[Hilbert, \cite{Hilbert1913} p. 149]
Any two ultra-parallel geodesic lines in $\mathbb{H}^{2}$ have a common perpendicular.
\end{Thm}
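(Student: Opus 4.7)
The plan is to work in the upper half-plane model $\{z \in \mathbb{C} : \Im z > 0\}$ of $\mathbb{H}^2$, where hyperbolic isometries act as real Möbius transformations and geodesics appear either as Euclidean open rays perpendicular to the real axis or as Euclidean semicircles with diameter on the real axis. Because the isometry group of $\mathbb{H}^2$ acts transitively on the set of geodesics, I would first apply an isometry sending one of the two ultra-parallel geodesics, say $\ell_1$, to the positive imaginary axis.

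After this normalization, the second geodesic $\ell_2$ must be a Euclidean semicircle rather than a vertical line, since any two distinct vertical geodesics share the boundary point $\infty$ and would therefore not be ultra-parallel. Writing $\ell_2$ as the semicircle with center $c \in \mathbb{R}$ and radius $r > 0$, the condition that $\ell_1$ and $\ell_2$ share no boundary point forces $|c| > r$.

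Next I would construct a common perpendicular $\ell$. A geodesic meeting the imaginary axis at a right angle must be a Euclidean semicircle whose Euclidean tangent at the intersection is horizontal; a short computation shows that this forces the semicircle to be centered at the origin, of some Euclidean radius $R > 0$. The upper half-plane model is conformal, so the hyperbolic angle between $\ell$ and $\ell_2$ coincides with the Euclidean angle between the two underlying circles at their intersection, and the classical orthogonality criterion for two Euclidean circles reads $R^2 + r^2 = c^2$. Since $|c| > r$, this equation has the unique positive solution $R = \sqrt{c^2 - r^2}$, which yields both existence and uniqueness of the common perpendicular.

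The step that I expect to require the most care is the reduction to the canonical position: one has to rule out the degenerate case in which $\ell_2$ appears as a vertical line, and this must be justified from the ultra-parallel hypothesis rather than silently assumed. Beyond that, the argument reduces to elementary Euclidean geometry about orthogonal circles together with the conformal nature of the model, and the same strategy transfers to the Poincaré disk used elsewhere in the paper by conjugating with the Cayley transform.
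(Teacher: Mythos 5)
Your proof is correct, but it takes a genuinely different route from the paper. The paper does not prove Hilbert's theorem in isolation: it derives it as the two-dimensional special case of its Theorem 2, whose existence part is a synthetic construction in the Poincar\'e ball model (the segment between the Euclidean centres of the two spheres, the point $Q$ where the tangent lines at $I_a$ and $I_b$ meet, the circle $C_Q$ through $I_a, I_b$, and a chain of Pythagorean identities showing $r_a^2 + r_Q^2 = d(M_a,Q)^2$), with uniqueness handled separately in the hyperboloid model via the span $\langle\{u_{H_a},u_{H_b}\}\rangle$. You instead normalize in the upper half-plane: transitivity of $\Iso(\mathbb{H}^2)$ on geodesics puts $\ell_1$ on the imaginary axis, perpendiculars to $\ell_1$ are exactly the semicircles centred at $0$, and the orthogonal-circles criterion $R^2 + r^2 = c^2$ yields the unique solution $R = \sqrt{c^2 - r^2}$. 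Your approach buys brevity, an explicit formula (from which one can even read off the distance $\bar d(\ell_1,\ell_2)$ of the paper's Definition 2.5), and existence and uniqueness in one stroke; the paper's approach avoids a full normalization and, more importantly, works for ultra-parallel geodesic subspaces of arbitrary dimension in $\mathbb{H}^n$, which is what the rest of the paper actually needs and what your coordinate argument does not directly give. Two small points to tighten: ultra-parallelism is disjointness \emph{plus} no common ideal point, and you need both --- no shared boundary point alone gives only $|c| \neq r$, while disjointness in $\mathbb{H}^2$ gives $|c| \geq r$, and the two together give $|c| > r$; and you should note (one line of algebra) that the orthogonal circles genuinely meet in the open half-plane, at height $y = Rr/|c| > 0$, so the perpendicular really crosses $\ell_2$ inside $\mathbb{H}^2$.
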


We extend this theorem to ultra-parallel geodesic subspaces in $\mathbb{H}^n$ and prove the uniqueness of the common perpendicular in case both subspaces are hyperplanes. Our proof is different from Hilbert's proof and includes his theorem. Therefore, we need the following geometrical lemma, which follows from basic Euclidean geometry.

\begin{Lem}\label{Lem: double intersection}
Let $\mathbb{S}^n$ be the unit Sphere embedded in the $(n+1)$-dimensional Euclidean space $\mathbb{E}^{n+1}$ and $S_1$, $S_2$ be two spheres of dimension $n$ or lower intersecting $\mathbb{S}^n$ orthogonally. 
\begin{itemize}
\item[(i)] If $S_1$ and $S_2$ are not intersecting in the closed unit ball $\overline{D}{}^{n+1}$, the segment $[C_1, C_2]$ between the centres $C_1$ and $C_2$ of $S_1$ and $S_2$ intersects $\mathbb{S}^n$ exactly two times.
\item[(ii)] If $S_1$ and $S_2$ are not intersecting in the open unit ball $D^{n+1}$, $S_1\cap S_2$ is a point in $\mathbb{S}^{n}$ or empty. 
\end{itemize}
\end{Lem}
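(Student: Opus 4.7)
The plan is to exploit the orthogonality identity $|OC_i|^2=1+r_i^2$, which holds for any sphere $S_i$ of center $C_i$ and radius $r_i$ meeting $\mathbb{S}^n$ orthogonally; this is the Pythagorean relation at any intersection point $P\in S_i\cap\mathbb{S}^n$, using $|OP|=1$, $|C_iP|=r_i$, and the orthogonality of the segments $OP$ and $C_iP$. A first consequence is that $O$ has the same power $+1$ with respect to both $S_1$ and $S_2$.

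First I would reduce the picture to the $2$-plane $\Sigma$ through $O, C_1, C_2$, which contains the segment $[C_1,C_2]$. Since $\Sigma$ passes through $O$, the trace $\mathbb{S}^n\cap\Sigma$ is the unit circle of $\Sigma$, and each $S_i\cap\Sigma$ is a Euclidean circle of radius $r_i$ about $C_i$, orthogonal to this unit circle. The radical hyperplane of the hyperspherical extensions of $S_1$ and $S_2$, which contains $S_1\cap S_2$, simplifies via the identity $|OC_i|^2-r_i^2=1$ to the hyperplane through $O$ perpendicular to the line $C_1C_2$. Letting $F$ denote the foot of the perpendicular from $O$ onto this line and $d=|OF|$, a direct Pythagorean computation shows $|P-F|^2=d^2-1$ for every $P\in S_1\cap S_2$. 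Hence the intersection is empty when $d<1$, reduces to the single point $F\in\mathbb{S}^n$ when $d=1$, and is a sphere of radius $\sqrt{d^2-1}$ centered at $F$ when $d>1$; in this last case the closest point to $O$ has distance $d-\sqrt{d^2-1}<1$, so the sphere necessarily meets $D^{n+1}$.

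Part (ii) now follows by contraposition: the hypothesis $S_1\cap S_2\cap D^{n+1}=\emptyset$ forces $d\le 1$, so $S_1\cap S_2$ is either empty or the single point $F\in\mathbb{S}^n$. For part (i), the stricter hypothesis $S_1\cap S_2\cap\overline{D}{}^{n+1}=\emptyset$ similarly gives $d<1$ strictly. Inside $\Sigma$ the line $C_1C_2$ then meets the unit circle at exactly the two points of signed distance $\pm\sqrt{1-d^2}$ from $F$, and since $|FC_i|=\sqrt{1+r_i^2-d^2}>\sqrt{1-d^2}$, each $C_i$ lies strictly beyond the corresponding crossing on its side of $F$; the remaining step is to verify that $C_1$ and $C_2$ sit on opposite sides of $F$, so that both crossings lie inside the segment $[C_1,C_2]$. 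The passage from hyperspheres to lower-dimensional spheres is handled by intersecting with the affine hulls $\Pi_1,\Pi_2$ of $S_1,S_2$ and transporting the same radical-sphere computation.

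The hard part will be this final positional argument in part~(i): ruling out the configuration in which both $C_i$ lie on the same side of $F$---equivalently, in which one Euclidean ball bounded by $S_i$ contains the other. Here the disjointness hypothesis $S_1\cap S_2\cap\overline{D}{}^{n+1}=\emptyset$ must be combined with the orthogonality identity in a more delicate way than the straightforward radical-sphere computation used for (ii), and this is the step on which the success of the proof hinges.
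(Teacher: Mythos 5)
You were right to isolate the final positional step as the crux, but it is not a missing argument --- it is a counterexample: the nested configuration you hoped to rule out actually occurs, so part (i) is \emph{false as stated}. In $\mathbb{E}^{n+1}$ take the $n$-spheres $S_1$ with centre $C_1=(2,0,\dots,0)$ and radius $\sqrt{3}$, and $S_2$ with centre $C_2=(10,0,\dots,0)$ and radius $\sqrt{99}$. Both satisfy your identity $|OC_i|^2=1+r_i^2$, hence meet $\mathbb{S}^n$ orthogonally, and they even bound genuinely ultra-parallel hyperplanes of the ball model (their traces on $\mathbb{S}^n$ lie in the disjoint hyperplanes $x_1=1/2$ and $x_1=1/10$). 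Since $|C_1C_2|=8<\sqrt{99}-\sqrt{3}$, the sphere $S_1$ lies strictly inside the ball bounded by $S_2$, so $S_1\cap S_2=\emptyset$ and the hypothesis of (i) holds; yet every point of $[C_1,C_2]$ has norm at least $2$, so the segment misses $\mathbb{S}^n$ entirely. This is consistent with your own formulas ($F=O$, $d=0<1$), with both centres on the same side of $F$: no combination of disjointness and orthogonality excludes it. For comparison, the paper gives no proof at all of this lemma (it is asserted to follow from ``basic Euclidean geometry''), and at the one place where (i) is invoked --- the existence part of Theorem \ref{Thm: Ultra-parallel theorem for subspaces} --- the spheres have already been normalized, without loss of generality, to lie in opposite open half-spaces of a Euclidean hyperplane through the centre of $\mathbb{S}^{n-1}$. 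That separation hypothesis is exactly what the lemma is missing; once it is added, your computation closes (i) at least for hyperspheres: separation makes the spheres disjoint and non-nested, so $d<1$ (by your formula $|P-F|^2=d^2-1$, any $d\ge 1$ would force intersection points), the radical hyperplane strictly separates $C_1$ from $C_2$, i.e.\ $F\in(C_1,C_2)$, and $|FC_i|=\sqrt{1+r_i^2-d^2}>\sqrt{1-d^2}$ puts both crossings of the line with $\mathbb{S}^n$ inside the segment, giving exactly two intersections. So the honest resolution is to repair the statement's hypotheses, not to look for a cleverer positional argument.

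There is also a smaller gap in your treatment of (ii) for lower-dimensional spheres: the contraposition ``no intersection in $D^{n+1}$ implies $d\le 1$'' is valid only for hyperspheres. After the reduction $S_i=\hat S_i\cap \Pi_i$, the set $S_1\cap S_2$ is merely a \emph{subset} of the radical sphere of radius $\sqrt{d^2-1}$ about $F$, and when $d>1$ your computation does not control which part of that sphere survives the intersection with $\Pi_1\cap\Pi_2$ --- it could a priori avoid $D^{n+1}$ without being empty or a single boundary point. A fix that works in all dimensions at once uses the symmetry your identity encodes: each $S_i$ is invariant under the inversion $\iota$ in $\mathbb{S}^n$ (this is the paper's Corollary \ref{Cor: Inversion on a sphere fixes as D^n set}), and $\iota$ swaps the inside and outside of the unit ball while fixing $\mathbb{S}^n$ pointwise; hence points of $S_1\cap S_2$ off $\mathbb{S}^n$ come in pairs $\{P,\iota(P)\}$ with exactly one member in $D^{n+1}$, so the hypothesis of (ii) forces $S_1\cap S_2\subseteq\mathbb{S}^n$. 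Two distinct common ideal points $\xi_1\neq\xi_2$ are then impossible: the sets $S_i\cap D^{n+1}$ are totally geodesic subspaces of the ball model of $\HH^{n+1}$ whose closures contain $\xi_1$ and $\xi_2$, so each contains the unique geodesic line with these endpoints, which would lie in $S_1\cap S_2\cap D^{n+1}$, contradicting the hypothesis. This argument replaces the affine-hull transport entirely and shows that (ii), unlike (i), is true exactly as stated.
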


In the Poincaré-ball model for $\HH^n$ hyperplanes are represented by $(n-1)$-dimensional spheres that intersect $\mathbb{S}^{n-1}$ orthogonally. This is why the following corollary is immanent.
\begin{Cor}\label{Lem: Intersection of Hyperplanes empty or point}
Let $H_1$ and $H_2$ be two parallel hyperplanes in $\HH^n$. The intersection of $\partial H_1$ and $\partial H_2$ is empty or a point in $\partial \HH^n$.
\end{Cor}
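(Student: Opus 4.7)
The plan is to reduce the statement directly to Lemma~\ref{Lem: double intersection}(ii) via the Poincaré ball model. In this model each hyperplane $H_i\subseteq\HH^n$ is the restriction to $D^n$ of an $(n-1)$-sphere $S_i\subseteq\widehat{\mathbb{E}}^n$ intersecting $\mathbb{S}^{n-1}=\partial\HH^n$ orthogonally, and the visual boundary $\partial H_i$ agrees with $S_i\cap\mathbb{S}^{n-1}$. Consequently
\[
\partial H_1\cap\partial H_2=(S_1\cap S_2)\cap\mathbb{S}^{n-1},
\]
so the problem is reduced to showing that $S_1\cap S_2$ is either empty or a single point of $\mathbb{S}^{n-1}$.

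Parallelism of $H_1$ and $H_2$ translates into $H_1\cap H_2=\emptyset$ in $\HH^n$, which is the same as $S_1\cap S_2\cap D^n=\emptyset$. If both $S_i$ are genuine $(n-1)$-spheres, Lemma~\ref{Lem: double intersection}(ii) applies directly and gives the conclusion. The only remaining case is that one (or both) of the $S_i$ is represented by an affine hyperplane through the origin. Two distinct diametral hyperplanes of $D^n$ always share $0$ and hence cannot be parallel, so at most one of them is of this form; composing with a hyperbolic isometry of $\HH^n$ (a M\"obius automorphism of $D^n$) that does not fix $\infty$ turns that affine hyperplane into a genuine sphere while sending the other $S_i$ to an $(n-1)$-sphere still orthogonal to $\mathbb{S}^{n-1}$. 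Such an isometry extends to a homeomorphism of $\overline{\HH}{}^n$ preserving parallelism and the visual boundary, so the previous case applies after this reduction.

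The only real subtlety is the handling of the diametral case, which is not covered by Lemma~\ref{Lem: double intersection} as stated for genuine spheres; the M\"obius reduction above is what circumvents this cleanly.
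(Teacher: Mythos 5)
Your proposal is correct and follows essentially the same route as the paper, which presents the corollary as immediate from Lemma~\ref{Lem: double intersection}(ii) once hyperplanes are viewed as spheres orthogonal to $\mathbb{S}^{n-1}$ in the Poincar\'e ball model. Your explicit treatment of the diametral case (a hyperplane represented by an affine plane through the origin, which has no centre and so is not literally covered by the lemma) is a point the paper leaves implicit, and your M\"obius reduction handles it cleanly --- one need only note that the isometry should be chosen so that the preimage of $\infty$ avoids both the plane and the other sphere, which a generic inversion achieves.
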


\setcounter{Theorem}{1}
\begin{Theorem}[Ultra-parallel theorem for subspaces]\label{Thm: Ultra-parallel theorem for subspaces}
Every pair of ultra-parallel geodesic subspaces in $\mathbb{H}^n$ has a common perpendicular. If both these subspaces are hyperplanes, the common perpendicular is unique. Every hyperplane intersecting both hyperplanes in a right angle contains this perpendicular. 
\end{Theorem}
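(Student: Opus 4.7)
The plan is to settle existence in the hyperplane case by pure linear algebra in the hyperboloid model, to reduce the general subspace case to a closest--points argument, and then to read off uniqueness and the last claim directly from Definition \ref{Definition: intersecting in a right angle}.

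\textbf{Existence for two hyperplanes.} Let $u_i\in\RR^{n,1}$ be a spacelike unit vector orthogonal to $V_{H_i}$ for $i=1,2$. By the standard trichotomy of pairs of hyperplanes together with Corollary \ref{Lem: Intersection of Hyperplanes empty or point}, ultra-parallelism is equivalent to $|\langle u_1|u_2\rangle|>1$. I set $W:=\langle u_1,u_2\rangle\subseteq\RR^{n,1}$; the Gram matrix of $(u_1,u_2)$ has determinant $1-\langle u_1|u_2\rangle^2<0$, so $W$ has signature $(1,1)$ and $\gamma:=W\cap\HH^n$ is a geodesic line. The vector $u_2-\langle u_1|u_2\rangle u_1\in V_{H_1}\cap W$ has Lorentzian square $1-\langle u_1|u_2\rangle^2<0$, hence is timelike, and after normalisation gives the unique intersection point $p_1=\gamma\cap H_1$. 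Because $W=\langle p_1\rangle\oplus\langle u_1\rangle$ with $u_1\in V_{H_1}^\bot$, Definition \ref{Definition: intersecting in a right angle} is satisfied, so $\gamma$ meets $H_1$ at a right angle; the argument at $H_2$ is symmetric.

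\textbf{Existence for arbitrary subspaces.} For ultra-parallel $H_1,H_2$ of any dimensions, the strategy is to show that $d(H_1,H_2)>0$ is attained at some $(p_1,p_2)\in H_1\times H_2$. Both positivity and attainment follow from compactness of $\overline{\HH}{}^n$ in the cone topology: any minimising sequence $(x_n,y_n)$ admits a subsequence converging in $\overline{\HH}{}^n\times\overline{\HH}{}^n$, and ultra-parallelism rules out limits at infinity. Indeed, two geodesic rays in $\HH^n$ stay within bounded distance only if they share an ideal point, so subsequences escaping to distinct ideal points would force $d(x_n,y_n)\to\infty$, while a common ideal limit is excluded because $\overline{H_1}\cap\overline{H_2}=\emptyset$ in $\overline{\HH}{}^n$. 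At a closest pair, a first-variation computation for $x\mapsto-\langle x|p_2\rangle=\cosh d(x,p_2)$ on $H_1$ yields $\langle\tau|p_2\rangle=0$ for every $\tau\in V_{H_1}\cap p_1^\bot$, hence $p_2\in\langle p_1\rangle+V_{H_1}^\bot$. Writing $p_2=\cosh(d)p_1+u$ with $u\in V_{H_1}^\bot$, the initial velocity of the geodesic from $p_1$ to $p_2$ equals $u/\sinh(d)\in V_{H_1}^\bot$, which is perpendicularity to $H_1$ in the sense of the definition; symmetry handles $H_2$.

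\textbf{Uniqueness and the containment claim.} If $\gamma'$ is any geodesic line perpendicular to both hyperplanes, Definition \ref{Definition: intersecting in a right angle} applied in each of the two directions forces $V_{\gamma'}\cap V_{H_i}^\bot\neq\{0\}$ for $i=1,2$; since the complements $V_{H_i}^\bot$ are one-dimensional and spanned by $u_i$, this gives $u_1,u_2\in V_{\gamma'}$, so $V_{\gamma'}\supseteq W$, and a dimension count yields $V_{\gamma'}=W$ and $\gamma'=\gamma$. The same observation applied to a hyperplane $K$ intersecting both $H_1,H_2$ at a right angle gives $u_1,u_2\in V_K$, hence $V_K\supseteq W$ and $K$ contains the common perpendicular $\gamma$.

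The main obstacle I expect is the attainment step in the general subspace case: each $H_i$ is a closed, unbounded, convex subset of $\HH^n$, and I must rule out that a minimising sequence drifts to $\partial\HH^n$. This is precisely where the ultra-parallel hypothesis is essential, via disjointness of the closures $\overline{H_1},\overline{H_2}$ in $\overline{\HH}{}^n$ and the standard fact that two geodesic rays in $\HH^n$ stay within bounded distance only if they share an ideal point.
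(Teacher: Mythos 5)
Your proof is correct, and its existence part takes a genuinely different route from the paper's. The paper proves existence synthetically in the Poincaré ball model: it joins the Euclidean centres $M_a,M_b$ of the spheres representing the two subspaces, and (when the origin is off that segment) constructs the perpendicular as a circle through the two points where $[M_a,M_b]$ crosses $\mathbb{S}^{n-1}$, centred at the common intersection point of the tangent lines, with orthogonality verified by a chain of Pythagorean identities; uniqueness and the containment claim are then obtained in the hyperboloid model essentially as you do, by forcing $u_{H_a},u_{H_b}$ into the relevant subspace and counting dimensions, so your third step coincides with the paper's. What you do differently is to dispense with the synthetic construction: for hyperplanes, ultra-parallelism gives $|\langle u_1|u_2\rangle|>1$, so the span of the normals has signature $(1,1)$ and cuts $\HH^n$ in the desired perpendicular, with the foot exhibited as the normalisation of $u_2-\langle u_1|u_2\rangle u_1$ (do fix the sign so it lands on the upper sheet, and note $u_1,u_2$ are independent since the hyperplanes are distinct); for arbitrary subspaces you run a closest-point argument, using compactness of $\overline{\HH}{}^n$ in the cone topology and the fact that a minimising sequence with bounded gaps escaping to infinity would yield a common ideal point, then a first-variation computation verifying perpendicularity in the precise sense of Definition \ref{Definition: intersecting in a right angle}. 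Each route buys something: your variational argument is the more robust one in the general-dimension case, since for subspaces of codimension greater than one the paper's segment $[M_a,M_b]$ need not meet the lower-dimensional spheres at all unless it lies in their affine spans, a point the synthetic proof glosses over, whereas your argument is dimension-blind; conversely, the paper's explicit ball-model construction records collinearity data (centres, feet and ideal endpoints on one Euclidean line) that is reused later in the proof of Lemma \ref{Lem: Ultra parallel hyperplanes, hyperplane in neighbourhood of endpoint of perpendicular}, though your description $\langle \{u_1,u_2\}\rangle\cap\HH^n$ is equally explicit and would serve the same purpose. The only step you leave to the literature is the trichotomy for $\langle u_1|u_2\rangle$; the direction you actually need is elementary and worth one line: if $|\langle u_1|u_2\rangle|<1$ the normals span a positive-definite plane whose orthogonal complement has signature $(n-2,1)$ and hence meets $\HH^n$, so the hyperplanes intersect, and if $|\langle u_1|u_2\rangle|=1$ then $u_2-\langle u_1|u_2\rangle u_1$ is a null vector in $V_{H_1}\cap V_{H_2}$, giving a common ideal point --- so there is no gap.
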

\begin{proof}
The proof consists of two parts. The existence part is shown constructively in the Poincaré ball model and the uniqueness part follows in the upper hyperboloid model.\par 
Let $H_a$ and $H_b$ be two ultra-parallel geodesic subspaces in $\mathbb{H}^n$. In the Poincaré ball model these are represented by spheres $S_a$ and $S_b$ that intersect $\mathbb{S}^{n-1}$ orthogonally. 
Since they are ultra-parallel, $S_a$ and $S_b$ do not intersect in the closed unit ball $\overline{D}{}^n$. 
Without loss of generality, we can assume that there exists a hyperplane $S$ through the centre $M$ of $\mathbb{S}^{n-1}$ such that we have $S_a\subseteq \mathring{S}^+$ and $S_b\subseteq \mathring{S}^-$ for the open half-spaces.  If necessary, apply a translation on the Poincaré ball model. Translations are isometries and preserve Euclidean angles (see \cite{Bridson2009}, 6.5 and 6.11). 
 We proceed with the following construction: \par
 Let $ M_a,M_a$ be the centres of the spheres $S_a$ and $S_b$. Since we assumed that $S_a$ and $S_b$ can be separated by a hyperplane through $M$, neither $M_a$ nor $M_b$ is $\infty$.  The segment $[M_a,M_b]$ in $\mathbb{E}^n$ intersects $\mathbb{S}^{n-1}$ in two points $I_a$ and $I_b$, because $S_a$ and $S_b$ are ultra-parallel (see Lemma \ref{Lem: double intersection}). In Addition, it intersects $S_a$ and $S_b$ orthogonally, since the segment issues from the centres of these spheres. If $M\in [M_a,M_b]$, the common perpendicular in $\overline{\HH}{}^n$ is represented by  $[I_a,I_b]$.\par
In case $M\notin [M_a,M_b]$, the triangle $\Delta MI_aI_b$ is an isosceles triangle and the segments $[M,I_a]$ and $[M,I_b]$ are contained in a unique plane $E\subseteq \mathbb{E}^n$.
Let $X$ be the middle point of the segment $[I_a,I_b]$.
Since the angles $\measuredangle I_b I_a M$ and $\measuredangle M I_b I_a$ are equal, the unique tangent lines to $\mathbb{S}^{n-1}$ in $E$ at $I_a$ and $I_b$ as well as the line $(M,X)$   intersect all in a point $Q$. We can construct a circle $C_Q$ with centre $Q$ through $I_a$ and $I_b$, because the triangles $\Delta I_aXQ$ and $\Delta I_bXQ$ are congruent.
$C_Q$ intersects $\mathbb{S}^{n-1}$ orthogonally, since its centre is on a tangent line.
Thus, it represents a geodesic line in the Poincaré ball model. 
It maintains to show that $C_Q$ intersects $S_a$ and $S_b$ orthogonally.
The following equations are implied by the Pythagorean theorem, where $r_M, r_a, r_b$ and $r_Q$ are the radii of the corresponding spheres in $\mathbb{E}^n$: 
\begin{figure}
\begin{overpic}[scale=0.3]
{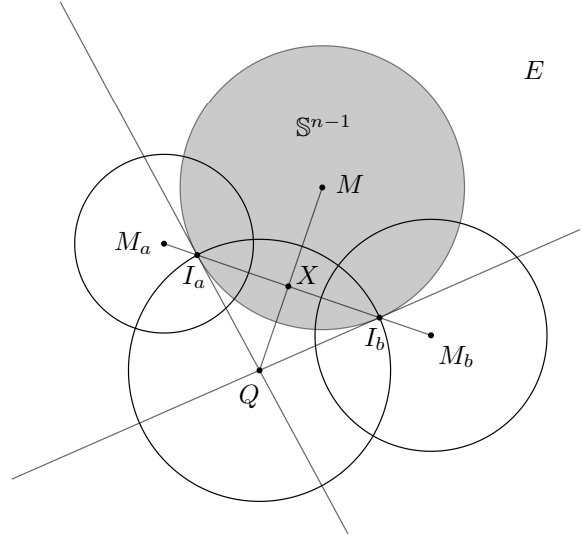}
\put(50,70){$\mathbb{S}^{n-1}$}
\put(90,80){$E$}
\put(57,60){$M$}
\put(18,50){$M_a$}
\put(75,30){$M_b$}
\put(50,44){$X$}
\put(40,23){$Q$}
\put(30,44){$I_a$}
\put(62,33){$I_b$}
\end{overpic}
\caption{Cross section by $E$}
\label{Fig:Ultra pralallel theorem}
\end{figure}
\begin{align}
\tag{i}   r_M^2 &= d(I_a, X)^2+d(M, X)^2 \label{i}\\
\tag{ii}   r_Q^2 &=d(I_a, X)^2+d(Q,X)^2 \label{ii}\\
\tag{iii}   d(M,M_a)^2 &= d(M_a, X)^2+d(M,X)^2\label{iii}\\
\tag{iv}   d(M_a, Q)^2 &= d(Q,X)^2+d(M_a,X)^2\label{iv}\\
\tag{v}   d(M, M_a)^2 &=r_M^2+r_a^2 \label{v}
\end{align}
By substituting we get: 
\begin{align*}
r_a^2+ r_Q^2 &\overset{\text{(\ref{v})}}{=}d(M, M_a)^2-r_M^2+r_Q^2 \overset{\text{(\ref{iii})}}{=} d(M_a, X)^2+d(X,M)^2-r_M^2+r_Q^2\\
 &\overset{\text{(\ref{ii})}}{=}d(M_a, X)^2 +d(M, X)^2-r_M^2+d(I_a, X)^2+d(Q, X)^2\\
  & \overset{\text{(\ref{iv})}}{=} d(M_a, Q)^2+d(M, X)^2-r_M^2+d(I_a, X)^2\\
  & \overset{\text{(\ref{i})}}{=} d(M_a, Q)^2.
\end{align*}
By the Pythagorean theorem $\measuredangle I_a Q_a Q$ is a right angle, where $Q_a$ is the intersection of $C_Q$ and $S_a$ in $D^{n}$. Analogously, we conclude that $\measuredangle I_b Q_b Q$ is a right angle. Hence, the circle $C_Q$ represents a common perpendicular of $S_a$ and $S_b$ and the existence is proven.\par 
To prove the uniqueness we change to the hyperboloid model. 
We assume that $H_a$ and $H_b$ are hyperplanes for the rest of the proof.
According to Definition \ref{Definition: intersecting in a right angle}, a hyperplane $H$ and a geodesic line $L$ intersect in a right angle in $\HH^n$ if there exists $v\in V_L\cap V_H$ such that $\langle\{u_H, v \}\rangle=V_L$, where $u_H$ is the up to sign unique unit vector in $V^\bot_H$.  
From the first part of this proof, we know that $H_a$ and $H_b$ have a common perpendicular. Let $u_{H_a}$ and $u_{H_b}$ be the corresponding unit vectors. The only possibility for a two dimensional subspace that represents a common perpendicular is $\langle \{u_{H_a},u_{H_b}\}\rangle$, since we assume the hyperplanes and their orthogonal complements to be different.
It follows that the existing common perpendicular is $\langle \{u_{H_a},u_{H_b}\}\rangle\cap \HH^n$ and unique. \par
Let $\tilde{H}$ be a hyperplane in $\mathbb{H}^n$ intersecting $H_a$ and $H_b$ in a right angle. 
We have $\langle \{u_{H_i}\} \cup (V_{H_i}\cap V_{\tilde{H}})\rangle = V_{\tilde{H}}$ for $i=a,b$. This implies $u_{H_i}\in V_{\tilde{H}}$ for $i=a,b$. It follows that every hyperplane orthogonal to two ultra-parallel hyperplanes contains their unique common perpendicular. 

\end{proof}

\begin{Rem} 
 
The uniqueness of the common perpendicular gives us a notion of distance between hyperplanes in $\overline{\HH}{}^n$. 
\end{Rem}

\begin{Def} \label{Def:Distance between hyperplanes}
Let $H_a$ and $H_b$ be two hyperplanes in $\HH^n$. The distance $\bar{d}(H_a, H_b)$ between these hyperplanes is defined as follows:
\[
\bar{d}(H_a, H_b):=\begin{cases}
d(\rho(a), \rho(b)) & H_a, H_b \text{ ultra-parallel}\\
0 & \text{ else,}
\end{cases}
\]
where $d(\rho(a), \rho(b))$ is the distance between the intersections $\rho(a), \rho(b)$ of $H_a$ and $H_b$ with their unique common perpendicular $\rho$.
\end{Def}




The rest of this section is devoted to the following lemma, which provides a group theoretic meaning for \textit{ultra-parallel} in hyperbolic reflection groups with a polytope as a fundamental domain.

\begin{Lem}\label{Lem: Intersection hyperplanes}
Let $(W,S)$ be a hyperbolic reflection group in $\HH^n$ with Coxeter polyhedron $P$. Let $\{H_1,\dots, H_m\}$ be the hyperplanes corresponding to elements in $S$ and $R$ be the set of reflections. Further, let $H_{r_1}, H_{r_2}$ be two distinct hyperplanes with $r_i\in R$.
\begin{itemize}
\item[(i)] $H_{r_1}, H_{r_2}$ intersect in $\HH^n$ if and only if there exist $s_i,s_j\in S$ with $m_{ij}<\infty$ and $w\in W$ such that 
\[
r_1r_2=w(s_is_j)^k w^{-1}\quad\text{with}\; k\in \mathbb{Z}\setminus \{0\}.
\]
\item[(ii)] If $P$ is a convex polytope, $H_{r_1}, H_{r_2}$ are not ultra-parallel if and only if there exist $s_i,s_j\in S$ and $w\in W$ such that 
\[
r_1r_2=w(s_is_j)^k w^{-1}\quad\text{with}\; k\in \mathbb{Z}\setminus\{0\}.
\]
\end{itemize}

\end{Lem}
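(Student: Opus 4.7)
The approach is to analyze the stabilizer of a common point of $H_{r_1}$ and $H_{r_2}$ (either in $\HH^n$ or on $\partial\HH^n$) by invoking Theorem~\ref{Thm: Polyhedron reflection group is Coxeter group, Vinberg+Davis}(v), which identifies this stabilizer with a conjugate of a rank-two special subgroup of $W$. The rank-two subgroup is dihedral (finite) if the common point lies in $\HH^n$, and affine if the common point is at infinity; in both cases the product $r_1r_2$ lives in the rotation/translation subgroup of that special subgroup.

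For part (i), in the forward direction, pick a generic $x \in H_{r_1}\cap H_{r_2}$ in the relative interior of a codimension-two face of the tessellation; such a face has the form $w\bigl(H_i \cap H_j \cap \overline{P}\bigr)$ for some $w \in W$ and distinct $i,j$. Genericity ensures that $H_{r_1}$ and $H_{r_2}$ are the only walls containing $x$, so Theorem~\ref{Thm: Polyhedron reflection group is Coxeter group, Vinberg+Davis}(v) gives $\langle r_1, r_2\rangle = W_x = w\langle s_i, s_j\rangle w^{-1}$. Since $H_i$ and $H_j$ meet, $m_{ij}<\infty$ and $\langle s_i, s_j\rangle$ is a finite dihedral group with rotation subgroup $\langle s_is_j\rangle$. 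The rotation $r_1r_2$ lies in the rotation subgroup of the conjugate, so $r_1r_2 = w(s_is_j)^kw^{-1}$ for some $k$, with $k \neq 0$ since $r_1\neq r_2$ forces $r_1r_2 \neq \id$. For the backward direction, the hypothesis forces $(s_is_j)^k$ to be nontrivial and of finite order, so $r_1r_2$ is elliptic; a product of two hyperbolic hyperplane reflections is elliptic if and only if the hyperplanes meet in $\HH^n$.

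For part (ii), the additional case is that $H_{r_1}$ and $H_{r_2}$ share a single ideal point $v \in \partial\HH^n$. The convex polytope assumption is what lets us extract group-theoretic structure at $v$: it forces $v$ to be an ideal vertex of some tile $gP$, because a pair of walls of the tessellation meeting at infinity can do so only via a common cusp of their adjacent tiles. Theorem~\ref{Thm: Polyhedron reflection group is Coxeter group, Vinberg+Davis}(v) then realizes $W_v$ as $g\langle S_{\tilde v}\rangle g^{-1}$, where $\tilde v = g^{-1}v$ is an ideal vertex of $P$ and $\langle S_{\tilde v}\rangle$ is an affine Coxeter special subgroup acting on the Euclidean horospherical link at $\tilde v$. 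Inside this cusp stabilizer $r_1r_2$ is a parabolic element, acting as a Euclidean translation on the link; we realize it as $(s_is_j)^k$ for generators $s_i, s_j \in S_{\tilde v}$ corresponding to parallel walls of $P$ through $\tilde v$ (so that $m_{ij}=\infty$ and $s_is_j$ generates the matching rank-one translation direction). The backward direction is structurally identical to part (i): given $r_1r_2=w(s_is_j)^kw^{-1}$ with $k\neq 0$, the element $(s_is_j)^k$ is elliptic (if $m_{ij}<\infty$) or parabolic (if $m_{ij}=\infty$, with $H_i,H_j$ parallel rather than ultra-parallel), which forces $H_{r_1}$ and $H_{r_2}$ to share a point in $\overline{\HH}{}^n$.

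The main obstacle is the forward direction of (ii) in the parallel case, where two technical steps need care. First, the claim that the shared ideal point $v$ is necessarily an ideal vertex of some tile is a global consequence of the convex polytope hypothesis and needs justification via the behaviour of geodesic rays toward $v$ and local finiteness of the tessellation. Second, matching the specific translation $r_1r_2$ to a single power $(s_is_j)^k$ for a pair of parallel simple walls at $\tilde v$ is straightforward when the affine cusp stabilizer has an $\tilde{A}_1$-type factor in the direction of $r_1r_2$, but requires a careful decomposition of the translation along the relevant rank-one sublattice of the cusp's translation group in higher-rank cases.
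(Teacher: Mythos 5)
Your part (i) is correct and, in its backward direction, takes a genuinely different and arguably cleaner route than the paper: the paper conjugates so that $r_1=s\in S$ and then uses Dyer's theorem (Theorem \ref{Thm: Dyer's Theorem}) to force $s\in\{s_i,s_j\}$ and $s$ to commute with $w$, whereas you classify the isometry type of $r_1r_2$ (elliptic/parabolic/hyperbolic according to intersecting/parallel/ultra-parallel hyperplanes), which is standard and valid. Your forward direction via Theorem \ref{Thm: Polyhedron reflection group is Coxeter group, Vinberg+Davis}(v) is essentially the justification the paper's terse claim $w_1^{-1}w_2=(s_is_j)^l$ is missing, so there it is the same idea made explicit. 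One slip: genericity of $x\in H_{r_1}\cap H_{r_2}$ does \emph{not} ensure that $H_{r_1},H_{r_2}$ are the only walls through $x$, since every hyperplane of the pencil through the codimension-two subspace $H_{r_1}\cap H_{r_2}$ contains all of its points; so $\langle r_1,r_2\rangle$ may be a proper subgroup of $W_x$. This is harmless: you only need $r_1,r_2\in W_x=w\langle s_i,s_j\rangle w^{-1}$ with $m_{ij}<\infty$, and that a product of two reflections in a dihedral group is a power of the rotation.

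The genuine gap is exactly the step you flagged in the forward direction of (ii), and it is worse than a technicality: in a higher-rank cusp there may be \emph{no} pair of parallel walls of $P$ through $\tilde v$, so the matching of the translation $r_1r_2$ to a conjugate power $(s_is_j)^k$ cannot be carried out at all. Concretely, take the regular ideal tetrahedron in $\HH^3$, all six dihedral angles $\pi/3$, so $m_{ij}=3$ for every pair of generators; each cusp stabilizer is the $(3,3,3)$ triangle group of type $\tilde{A}_2$, whose Euclidean reflection arrangement contains families of parallel lines. Reflections $r_1,r_2\in R$ across two such parallel lines correspond to hyperplanes sharing exactly the ideal point $v$ (not ultra-parallel), and $r_1r_2$ is parabolic of infinite order, while \emph{every} element $w(s_is_j)^kw^{-1}$ in this group is elliptic of order dividing $3$ or equal to $\id$. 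So no ``rank-one sublattice decomposition'' can exist, and the implication you are trying to prove fails in this example; note that the paper's own proof of (ii), the single sentence ``works analogously except that we ignore the condition $m_{ij}<\infty$,'' glosses over precisely this point and is afflicted by the same problem. Separately, your backward direction of (ii) silently assumes that $m_{ij}=\infty$ forces $H_i,H_j$ to be parallel rather than ultra-parallel; the paper's definition of convex polytope does not guarantee this (an ideal quadrilateral in $\HH^2$ is a convex polytope whose opposite sides are ultra-parallel walls), and if $H_i,H_j$ are ultra-parallel then $(s_is_j)^k$ is a hyperbolic translation along their common perpendicular and $H_{r_1},H_{r_2}$ come out ultra-parallel, so that implication also breaks. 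Both defects require either extra hypotheses on $P$ (e.g.\ that no two walls, and no two reflection hyperplanes through a common ideal point, sit in a configuration of the above kinds) or a reformulated statement; they cannot be repaired within your outline as written.
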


\begin{proof}
We prove (i). Assume that $H_{r_1}$ and $H_{r_2}$ intersect in $\HH^n$. 
Hence, there exists $w_1, w_2\in W$ such that $H_{r_1}= w_1 H_i$ and $H_{r_2}= w_2 H_j$ for some $i,j\in \{1,\dots, m\}$ and  $\tilde{w}=w_1^{-1}\cdot w_2 =(s_is_j)^l$ with $l\in \mathbb{Z}$. Since the intersection $H_{r_1}\cap H_{r_2}$ is not empty in $\HH^n$, we have $m_{ij}<\infty$.
It follows that $r_1=w_1 s_i w_1^{-1}$ and $r_2=w_1 \tilde{w} s_x\tilde{w}^{-1}w_1^{-1}$, where $s_x\in \{i,j\}$. Thus, we obtain
\[
r_1r_2= w_1s_i \tilde{w} s_x\tilde{w}^{-1}w_1^{-1}=w_1(s_is_j)^kw_1^{-1}
\]
with $k \in \mathbb{Z}\setminus \{0\}$.

Now, let $r_1r_2=w(s_is_j)^kw^{-1}$ for some $k\in \mathbb{Z}\setminus \{0\}$, $m_{ij}<\infty$ and $w\in W$. Without loss of generality, we may assume additionally $r_1=s\in S$ (conjugating both sides). Multiplying with $r_1$ yields $r_2=s\cdot w(s_is_j)^kw^{-1}$. Both sides have reflection length $1$. It also may be assumed that $k$ is maximal in the sense that the last letters of a word representing $w$ aren't $s_i$ or $s_j$. If $s\notin \{s_i, s_j\}$, we have $l_R(s\cdot w(s_is_j)^kw^{-1})=3$ with Theorem \ref{Thm: Dyer's Theorem}. We also obtain $l_R(s\cdot w(s_is_j)^kw^{-1})=3$ with the same criterion if we assume that $s$ does not commute with $w$.   Hence, we get $s\in \{s_i, s_j\}$ and $s$ commutes with $w$.
Let $v=s_is_j\cdots$ be a dihedral element with $S$-length $k$. We have $r_1 = w s w^{-1}$ and $r_2=s\cdot w(s_is_j)^kw^{-1}=wvs_xv^{-1}w^{-1}$ with $s_x\in \{s_i,s_j\}$. The hyperplanes $H_i$ and $H_j$ are intersecting, because $m_{ij}<\infty$. Thus, the hyperplanes $H_s$ and $H_{vs_xv^{-1}}$ intersect and so do $H_{r_1}$ and $H_{r_2}$.  
The proof of the second statement works analogously except that we ignore the condition $m_{ij}<\infty$. 
\end{proof}

\section{Arbitrary reflection length close to boundary points}
To prove Theorem \ref{Thm: Reflection length n in certain neighbourhood} for arbitrary hyperbolic reflection groups, we state a series of lemmata first.

\begin{Lem}\label{Lem: Existence of  unique sphere that intersects sphere in certain sphere}
Let $\mathbb{S}^n$ be the unit sphere in the Euclidean space $\mathbb{E}^{n+1}$. For every $(n-1)$-dimensional sphere $O$ in $\mathbb{S}^n$ there exists a unique $(n-1)$-dimensional sphere $S_O$ in $\mathbb{E}^{n+1}$ with 
\[
\mathbb{S}^n \cap S_O = O
\]
and $S_O$ intersects  $\mathbb{S}^n$ in a right angle. 
\end{Lem}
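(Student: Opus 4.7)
The plan is to use Euclidean symmetry together with the classical Pythagorean criterion for orthogonality of two spheres in $\mathbb{E}^{n+1}$, and to carry out the construction by a direct computation in coordinates adapted to the geometry.

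First I would set up coordinates. Since $O$ is a codimension-one subsphere of $\mathbb{S}^n$, there is a unique hyperplane $P\subseteq\mathbb{E}^{n+1}$ with $O=P\cap\mathbb{S}^n$. Writing $M$ for the origin (the centre of $\mathbb{S}^n$), $n_P$ for the unit vector orthogonal to $P$ pointing from $M$ to $P$, and $h=d(M,P)\in[0,1)$, the centre of $O$ in $P$ is $c_O=M+h\,n_P$ and its radius is $r_O=\sqrt{1-h^2}$. I would assume $h>0$; the degenerate case in which $O$ is a great subsphere is handled separately, using the standard Poincaré-ball convention of counting hyperplanes as generalised spheres, and $P$ itself plays the role of $S_O$.

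Next I would locate the centre $c'$ of any candidate $S_O$. Decomposing $c'-c_O = a n_P + b$ with $b\perp n_P$ and expanding $|c'-p|^2$ for an arbitrary $p = c_O + v \in O$ (with $v\perp n_P$, $|v|=r_O$), the requirement that every point of $O$ lie at the same distance $r'$ from $c'$ forces $b\cdot v = 0$ for every admissible $v$, hence $b=0$. Thus $c'$ lies on the symmetry axis $\ell = M + \RR\,n_P$, and writing $c' = M + t\,n_P$ the containment $O\subseteq S_O$ yields $r'^2 = (t-h)^2 + 1 - h^2$. The orthogonality criterion for two spheres with centres $c_1,c_2$ and radii $\rho_1,\rho_2$ is $|c_1-c_2|^2 = \rho_1^2 + \rho_2^2$ (the Pythagorean identity applied to the radii at any common point); for $S_O$ and $\mathbb{S}^n$ this becomes $t^2 = r'^2 + 1$, and substituting the previous expression simplifies to $2ht = 2$. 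Hence $t=1/h$ and $r' = r_O/h$, both uniquely determined, which gives existence and uniqueness simultaneously.

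Finally I would verify that $S_O\cap\mathbb{S}^n=O$ rather than merely $O\subseteq S_O\cap\mathbb{S}^n$: two distinct codimension-one spheres in $\mathbb{E}^{n+1}$ meet in the empty set, in a single point, or in a codimension-two sphere, and here the intersection already contains the codimension-two sphere $O$, so it must equal $O$. The only mild obstacle I anticipate is making the equidistance-forces-symmetry step (the vanishing of the tangential component $b$) fully rigorous; once that is in place, everything else reduces to one line of algebra together with a dimension count for the intersection.
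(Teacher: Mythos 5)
Your proof is correct, and it takes a genuinely different route from the paper's. The paper only sketches a synthetic construction: choose $n+1$ points of $O$ in general position, intersect the tangent hyperplanes to $\mathbb{S}^n$ at these points in a single point $c$, declare $c$ the centre of $S_O$, and leave all verifications (including the uniqueness claim) to the reader. You instead work in adapted coordinates: the symmetry argument forcing $b=0$ pins any candidate centre to the axis $M+\mathbb{R}\,n_P$, and the Pythagorean orthogonality criterion $\lvert c_1-c_2\rvert^2=\rho_1^2+\rho_2^2$ converts containment plus orthogonality into the single linear equation $2ht=2$, yielding $t=1/h$ and $r'=r_O/h$ — existence and uniqueness in one computation, with explicit data. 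Note that your $c'=M+(1/h)\,n_P$ is exactly the pole of the hyperplane $P$ spanned by $O$, i.e.\ the common point of the paper's tangent hyperplanes, so the two arguments describe the same sphere; yours buys rigour and formulas, the paper's buys brevity and a coordinate-free picture. Your equidistance step is already essentially rigorous as written: since $v$ and $-v$ are both admissible, constancy of $\lvert c'-p\rvert^2$ forces $b\cdot v=0$ for all $v$ in the $(n-1)$-sphere of radius $r_O$, which spans the $n$-dimensional direction space of $P$, whence $b=0$. Your caveat about the degenerate case $h=0$ is also well taken and is a point the paper glosses over: for a great subsphere there is no genuine sphere in $\mathbb{E}^{n+1}$ satisfying the conclusion (your $t=1/h$ diverges, and correspondingly the paper's tangent hyperplanes become parallel rather than concurrent), so one must, as you do, pass to generalised spheres in $\widehat{\mathbb{E}}^{n+1}$ and let $P$ itself serve as $S_O$ — which is consistent with how the lemma is applied in the Poincaré ball model, where hyperplanes are counted as spheres through $\infty$. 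Finally, your closing dimension count correctly upgrades $O\subseteq S_O\cap\mathbb{S}^n$ to equality, a step the paper's sketch does not address at all.
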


\begin{proof}
We give a sketch of a proof and leave the details to the reader.
Given $O\subseteq \mathbb{S}^n$, we obtain $S_O$ by the following construction: \newline
Take $n+1$ disjoint points $p_1,\dots, p_{n+1}$ in $O$ and consider the tangent spaces $T_1, \dots, T_{n+1}$ at this points. These tangent spaces are distinct hyperplanes in $\mathbb{E}^{n+1}$ intersecting in a single point $c$. The point $c$ is the centre of $S_O$ and together with a point in $O$ it determines $S_O$ uniquely.
\end{proof}

We recall that $D^n$ abbreviates the unit ball in $\mathbb{E}^n$.

\begin{Cor}\label{Cor: Inversion on a sphere fixes as D^n set}
The inversion on a sphere in $\widehat{\mathbb{E}}^{n+1}$ intersecting $\mathbb{S}^{n}$ orthogonally fixes $\mathbb{S}^{n}$ and $D^{n+1}$ as sets.
\end{Cor}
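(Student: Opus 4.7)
My plan has two steps: first, show that $\mathbb{S}^n$ is preserved setwise by $i_S$; then upgrade this to preservation of $D^{n+1}$ by a topological argument.

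Assume first that $S$ is a genuine bounded Euclidean sphere with centre $c$ and radius $r$. At any intersection point $p \in S \cap \mathbb{S}^n$, orthogonality means the radii $p-c$ and $p$ are perpendicular, so the Pythagorean theorem applied to the triangle with vertices $0$, $c$, $p$ yields the single algebraic identity $\|c\|^2 = 1 + r^2$. Plugging the inversion formula \eqref{Eq: Formula inversion on a sphere} into $\|i_S(p)\|^2$ for an arbitrary $p \in \mathbb{S}^n$ and using this identity should, after a short expansion and cancellation, reduce the expression to $1$. This gives $i_S(\mathbb{S}^n) \subseteq \mathbb{S}^n$; equality follows because $i_S$ is an involution on $\widehat{\mathbb{E}}^{n+1} \setminus \{c\}$. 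The degenerate case in which $S$ is an affine hyperplane (a sphere through $\infty$) makes $i_S$ a Euclidean reflection, and orthogonality to $\mathbb{S}^n$ forces $S$ to pass through the origin, so preservation of $\mathbb{S}^n$ and $D^{n+1}$ is immediate.

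For the second step, extend $i_S$ to a homeomorphism of $\widehat{\mathbb{E}}^{n+1}$ by declaring $i_S(c) = \infty$ and $i_S(\infty) = c$. Since $i_S$ fixes $\mathbb{S}^n$ as a set by Step 1, it permutes the two connected components of $\widehat{\mathbb{E}}^{n+1} \setminus \mathbb{S}^n$, namely $D^{n+1}$ and the open exterior. The identity $\|c\|^2 = 1 + r^2 > 1$ places $c$ in the exterior, and $i_S(c) = \infty$ lies there too. A homeomorphism that permuted the two components nontrivially would have to send each to the other; this is incompatible with $c$ and $i_S(c)$ lying in the same component. Hence $i_S$ fixes each component setwise, and in particular $i_S(D^{n+1}) = D^{n+1}$.

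The only real obstacle is the bookkeeping in the Euclidean computation of Step 1, which is routine but has to be executed cleanly; everything else is a formal consequence of the orthogonality identity $\|c\|^2 = 1 + r^2$ and the fact that $i_S$ is a homeomorphism of $\widehat{\mathbb{E}}^{n+1}$.
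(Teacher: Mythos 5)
Your proposal is correct, but it takes a genuinely different route from the paper's. The paper disposes of the corollary in two lines: inversions of $\widehat{\mathbb{E}}^{n+1}$ map spheres to spheres and preserve Euclidean angles (cited from Bridson--Haefliger, Proposition 6.5), so $i_S(\mathbb{S}^n)$ is again a sphere meeting $S$ orthogonally in the pointwise-fixed set $S\cap \mathbb{S}^n$, and the uniqueness part of the preceding lemma (there is exactly one sphere orthogonal to a given sphere through a prescribed $(n-1)$-sphere in it) forces $i_S(\mathbb{S}^n)=\mathbb{S}^n$; the statement about $D^{n+1}$ is left implicit there. You instead verify everything by hand, and your argument does close: from $\|c\|^2=1+r^2$ and $\|x\|=1$ one gets $\|x-c\|^2 = 2+r^2-2\langle x,c\rangle$ and $r^4 + 2r^2\langle x-c,c\rangle = -r^2\|x-c\|^2$, whence $\|i_S(x)\|^2 = -r^2+\|c\|^2 = 1$, so the expansion you deferred is indeed a one-line cancellation and should be written out rather than asserted. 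Your second step --- $i_S$ permutes the two components of $\widehat{\mathbb{E}}^{n+1}\setminus\mathbb{S}^n$, and $c$ together with $i_S(c)=\infty$ pins down that the permutation is trivial --- is exactly the point the paper's proof glosses over, and it is correct since $\|c\|>1$ places $c$ in the exterior component. Comparing the two: your proof is self-contained and elementary (no conformality citation, no appeal to the uniqueness lemma), it treats the degenerate case of a hyperplane through $\infty$ explicitly, and it makes the preservation of $D^{n+1}$ an honest argument; the paper's proof is shorter and more conceptual, reusing machinery (conformality of inversions, the uniqueness lemma) that it needs elsewhere anyway, at the cost of leaving both the hyperplane case and the $D^{n+1}$ claim to the reader.
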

\begin{proof}
The inversion on a sphere in $\widehat{\mathbb{E}}^n$ maps spheres to spheres and preserves the Euclidean angle between intersecting spheres (see \cite{Bridson2009}, Proposition 6.5). 
Together with Lemma \ref{Lem: Existence of  unique sphere that intersects sphere in certain sphere} this implies the corollary.
\end{proof}

The principal significance of the following two lemmata is to draw conclusions from conditions (i) and (ii) in Theorem \ref{Thm: Reflection length n in certain neighbourhood}. These are important ingredients for the proof of the theorem.

\begin{Lem}\label{Lem: Hyperplane in every neighbourhood, parallel hyperplanes with common point}
Let $H_1$ and $H_2$ be two parallel hyperplanes in $\HH^n$ with a common point $\xi\in\partial H_1\cap \partial H_2 \subseteq \partial \HH^n$. Let $S=\{s_1, s_2\}\subseteq Iso(\HH^n)$ be the corresponding reflections across $H_1, H_2$, respectively. For every neighbourhood $U$ of $\xi$ 
there exists a reflection $r$ in the Coxeter group $\langle S\rangle\subseteq Iso(\HH^n)$ such that $H_r\subseteq U$.
\end{Lem}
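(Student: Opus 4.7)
The plan is to identify $s_1s_2$ as a parabolic isometry fixing $\xi$ and to exploit its iterates to push $H_1$ arbitrarily close to $\xi$. Since $H_1$ and $H_2$ are distinct and disjoint in $\HH^n$, the product $t:=s_1s_2$ has no fixed point in $\HH^n$, fixes $\xi$, and cannot fix a second ideal point since Corollary~\ref{Lem: Intersection of Hyperplanes empty or point} forces $\partial H_1\cap\partial H_2=\{\xi\}$. Hence $t$ is parabolic of infinite order, $\langle s_1,s_2\rangle$ is an infinite dihedral group, and every reflection in it has the form $t^{k}s_it^{-k}$ with fixed hyperplane $t^{k}H_i$ for some $k\in\mathbb{Z}$ and $i\in\{1,2\}$. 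It therefore suffices to show that $t^{k}H_1\subseteq U$ for all sufficiently large $|k|$.

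I would pass to the upper half-space model $\RR^{n-1}\times\RR_{>0}$ of $\HH^n$ via an isometry taking $\xi$ to the point at infinity. Hyperplanes whose ideal closure contains $\infty$ are exactly the vertical Euclidean hyperplanes, so $H_1,H_2$ become vertical hyperplanes based over two affine hyperplanes $L_1,L_2\subseteq\RR^{n-1}$, and the condition $\partial H_1\cap\partial H_2=\{\infty\}$ forces $L_1\cap L_2=\emptyset$, i.e.\ $L_1$ and $L_2$ are parallel in the Euclidean sense. Then $s_1,s_2$ are Euclidean reflections across these vertical hyperplanes, and $t=s_1s_2$ is a Euclidean translation by a nonzero vector $v\in\RR^{n-1}$ perpendicular to $L_1$. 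Consequently $t^{k}H_1$ is the vertical hyperplane over $L_1+kv$, whose Euclidean distance from the origin grows like $|k||v|$.

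What remains is to verify that cone-topology neighbourhoods of $\xi=\infty$ eventually contain these translates. Fixing a basic neighbourhood $U(c,r,\epsilon)$ of the form given in Definition~\ref{Def: cone topology} with basepoint $x_0=(0,\dots,0,1)$ and $c$ the vertical geodesic from $x_0$, a direct computation of the geodesic arc from $x_0$ to a generic point $(y,t)\in\HH^n$ (a Euclidean half-circle with centre on $\RR^{n-1}$) shows that both its hyperbolic length and the angle its initial tangent makes with the vertical depend only on $|y|$ and $t$, and that for $|y|\to\infty$ the angle tends to $0$ and the length tends to infinity, uniformly in $t>0$. Thus there is $M>0$ such that every point $(y,t)$ with $|y|>M$ lies in $U(c,r,\epsilon)$, and once $|k|$ is large enough, $L_1+kv\subseteq\{y:|y|>M\}$, whence $t^{k}H_1\subseteq U$.

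The main technical obstacle is this last uniform bound, since $t^{k}H_1$ also contains points very close to the Euclidean boundary $\RR^{n-1}$; it is not immediately obvious from the upper half-space picture that such points are close to $\infty$ in the cone topology, and the explicit geodesic calculation is essential to rule out this intuition.
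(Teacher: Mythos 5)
Your proposal is correct, but it takes a genuinely different route from the paper's. The paper never leaves the Poincaré ball model: it forms the sequence of spheres $\hat S_0=S_2$, $\hat S_i=i_{\hat S_{i-1}}(S_1)$, notes that every $\hat S_i$ passes through $\xi$ and corresponds to a reflection in $\langle S\rangle$, and uses the explicit inversion formula to show the Euclidean radii decrease monotonically to $0$, so that some hyperplane $H_r$ has small enough Euclidean diameter to fit inside $U$ (implicitly using that cone-topology neighbourhoods of $\xi$ contain Euclidean neighbourhoods in the closed ball). You instead normalize $\xi$ to $\infty$ in the upper half-space model, recognize $\langle s_1,s_2\rangle\cong D_\infty$ with $t=s_1s_2$ a Euclidean translation perpendicular to the parallel vertical hyperplanes, classify all reflections as $t^k s_i t^{-k}$ with mirrors $t^kH_i$, and show these translates eventually enter any cone neighbourhood of $\infty$. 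Your ``main technical obstacle'' is indeed the only delicate point, and your sketch of it is sound: for $x=(y,s)$ and $x_0=(0,\dots,0,1)$ one has $\cosh d(x_0,x)=\bigl(|y|^2+s^2+1\bigr)/(2s)\geq\sqrt{|y|^2+1}$, and the initial tangent of $[x_0,x]$ makes angle $\arctan(1/a)$ with the vertical ray $c$, where $a=\bigl(|y|^2+s^2-1\bigr)/(2|y|)\geq\bigl(|y|^2-1\bigr)/(2|y|)$, both bounds uniform in $s>0$; together with the fact that two geodesics issuing from $x_0$ at small angle stay $\epsilon$-close up to time $r$, this gives your $M$. (You could even bypass the computation: $\overline{\HH}{}^n$ with the cone topology is homeomorphic to the closed unit ball, so in the half-space closure the neighbourhoods of $\infty$ are exactly the complements of bounded sets, hence contain $\{|x|>M\}$.) What your approach buys is a complete structural picture — parabolicity of $t$, an explicit list of all reflections in the group, and a transparent reason the mirrors accumulate only at $\xi$; what the paper's buys is reusability: its ball-model inversion sequence is repeated almost verbatim for the ultra-parallel case in Lemma \ref{Lem: Ultra parallel hyperplanes, hyperplane in neighbourhood of endpoint of perpendicular}, whereas your normalization $\xi\mapsto\infty$ is specific to the asymptotic situation. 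One cosmetic point: you overload $t$ as both the translation $s_1s_2$ and the height coordinate of a generic point $(y,t)$; rename one of them.
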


\begin{proof}
The  proof is conducted in the Poincaré ball model. The hyperplanes $H_i$ are represented by spheres $S_i$ in the one-point compactification $\widehat{\mathbb{E}}^n$ that intersect the unit sphere $\mathbb{S}^{n-1}$ orthogonally.  We can assume that these spheres have radii $r_i$, centre $c_i$ and an antipode $a_i$ to $\xi$ (apply an inversion on the other sphere, in case one sphere contains $\infty$).
Let $(\hat{S}_i)_{i\in \NN}$ be the sequence of spheres defined by 
\[
\hat{S}_0:= S_2\text{ and }\hat{S}_i=i_{\hat{S}_{i-1}}(S_1), 
\]
where $i_{\hat{S}_{i-1}}$ is the inversion on the sphere $\hat{S}_{i-1}$. According to Corollary \ref{Cor: Inversion on a sphere fixes as D^n set}, the unit sphere $\mathbb{S}^{n-1}$ is fixed as a set by all inversions $i_{\hat{S}_{i}}$. All spheres $(\hat{S}_i)_{i\in \NN}$ contain $\xi$, because $\xi$ is fixed by each $i_{\hat{S}_{i}}$.
Let $\hat{c}_i$ be the centre and $\hat{r}_i$ be the radius of $\hat{S}_i$. The points $\xi, c_i,, a_i, \hat{c}_i$ are collinear in $\widehat{\mathbb{E}}^n$ for all $i\in \NN$.  This follows from the definition of the inversion on a sphere. We have $d_2(\xi, \hat{c}_i)= \hat{r}_i$.  Given the centre $\hat{c}_{i-1}$ and the radius $\hat{r}_{i-1}$, the following equations hold according to Formula (\ref{Eq: Formula inversion on a sphere}) for an inversion on a sphere: 
\begin{align*}
\hat{c}_i &= \dfrac{1}{2}\cdot (\xi-i_{\hat{S}_{i-1}}(a_1))+i_{\hat{S}_{i-1}}(a_1), \\
\hat{r}_i &= \dfrac{1}{2}\cdot d_2(i_{\hat{S}_{i-1}}(a_1), \xi),\\
i_{\hat{S}_{i-1}}(a_1) &= \dfrac{\hat{r}_{i-1}{}^2}{||a_1 - \hat{c}_{i-1} ||^2}\cdot (a_1 - \hat{c}_{i-1})+ \hat{c}_{i-1}.
\end{align*}
All $i_{\hat{S}_{i}}(a_1)$ lay in the segment $[\xi, \hat{c}_{i-1}]$.  Since the sequence $(d_2(i_{\hat{S}_{i}}(a_1), \xi))_{i\in \NN}$ is monotonously decreasing and bounded by $0$, it converges to $0$, the only possible limit. \par
Hence, by reflecting $S_1$ across $\hat{S}_i$, we obtain spheres with arbitrary small diameter in $\mathbb{E}^n$ that contain $\xi$ as $i\to \infty$.
The spheres represent hyperplanes corresponding to reflections in $\{wsw^{-1}\mid s\in S, w\in \langle S\rangle \}$ and we find a sufficiently small sphere that represents a hyperplane $H_r$ contained in $U$.
\end{proof}

\begin{Lem}\label{Lem: Ultra parallel hyperplanes, hyperplane in neighbourhood of endpoint of perpendicular}
Let $H_1$ and $H_2$ be two ultra-parallel hyperplanes in $\HH^n$ and $S=\{s_1, s_2\}\subseteq Iso(\HH^n)$ be the corresponding reflections. Let $\gamma$ be a geodesic ray contained in the unique common perpendicular $\bar{\gamma}$ of $H_1$ and $H_2$.  For every neighbourhood $U$ of $\gamma(\infty)$ there exists a reflection $r$ in the Coxeter group $\langle S\rangle\subseteq Iso(\HH^n)$ such that $H_r\subseteq U$.
\end{Lem}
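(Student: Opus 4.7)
The plan is to exploit the fact that, by Theorem~\ref{Thm: Ultra-parallel theorem for subspaces}, the common perpendicular $\bar\gamma$ is unique and both $H_1,H_2$ meet it orthogonally. First I would observe that each reflection $s_i$ preserves $\bar\gamma$ setwise: it fixes the point $p_i := H_i \cap \bar\gamma$ and exchanges the two rays of $\bar\gamma$ issuing from $p_i$. Hence the whole group $\langle s_1,s_2\rangle \subseteq \operatorname{Iso}(\HH^n)$ preserves $\bar\gamma$, and its restriction to $\bar\gamma$ is generated by the two point-reflections at $p_1,p_2$, i.e.\ an infinite dihedral group of isometries of the line $\bar\gamma$ (infinite because $H_1,H_2$ are ultra-parallel, so $p_1 \neq p_2$ and $s_1 s_2$ acts on $\bar\gamma$ as a non-trivial translation).

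Next, I would note that every reflection $r$ in $\langle s_1,s_2\rangle$ (every conjugate of $s_1$ or $s_2$) restricts to a point-reflection at some point $p_r \in \bar\gamma$, and the hyperplane $H_r = \operatorname{Fix}(r)$ passes through $p_r$ perpendicularly to $\bar\gamma$. The set $\{p_r\}$ is a discrete, $\mathbb Z$-indexed subset of $\bar\gamma$ whose only accumulation points (in $\overline{\HH}{}^n$) are the two ideal endpoints of $\bar\gamma$. So I can select a sequence of reflections $r_k \in \langle s_1,s_2\rangle$ with $p_{r_k} \to \gamma(\infty)$ along $\bar\gamma$.

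The remaining and main geometric step is to upgrade this from convergence of the single points $p_{r_k}$ to convergence of the entire hyperplanes $H_{r_k}$, as subsets of $\overline{\HH}{}^n$, to the singleton $\{\gamma(\infty)\}$. I would work in the Poincaré ball model, applying an isometry so that $\bar\gamma$ is a diameter of $D^n$ and $\gamma(\infty) = \xi \in \mathbb{S}^{n-1}$. The hyperplane $H_{r_k}$ is represented by a Euclidean sphere $S_{r_k}$ which meets $\bar\gamma$ orthogonally at $p_{r_k}$ and $\mathbb{S}^{n-1}$ orthogonally; symmetry about $\bar\gamma$ forces its Euclidean center to lie on the affine line through $\bar\gamma$. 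Writing $p_{r_k} = (t_k,0,\dots,0)$ with $t_k \to 1$, the two orthogonality conditions determine the center $c_k$ and radius $\rho_k$ of $S_{r_k}$, and a short computation gives $\rho_k \to 0$ and $c_k \to \xi$. Since the cone topology on $\overline{\HH}{}^n$ agrees with the Euclidean topology on the closed ball $\overline{D}{}^n$, the closed set $S_{r_k} \cap \overline{D}{}^n$ is eventually contained in the given neighbourhood $U$ of $\xi$, yielding $H_{r_k} \subseteq U$ for large $k$.

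The main obstacle I anticipate is precisely this last step: the intersection points $p_{r_k}$ approach $\gamma(\infty)$ along $\bar\gamma$, but it is not a priori obvious that the \emph{whole} hyperplanes $H_{r_k}$, including their ideal boundaries $\partial H_{r_k} \subseteq \partial \HH^n$, also enter $U$. The structural reason they do is that each $H_{r_k}$ is forced by its perpendicularity to $\bar\gamma$ to be rotationally symmetric about $\bar\gamma$, and then the orthogonality with $\mathbb{S}^{n-1}$ pins down its Euclidean size in terms of $t_k$ alone; this is the heart of the proof and the place where Theorem~\ref{Thm: Ultra-parallel theorem for subspaces} (ensuring $\bar\gamma$ exists and is common to all reflection planes in $\langle s_1,s_2\rangle$) is genuinely used.
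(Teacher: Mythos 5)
Your proposal is correct, but it takes a genuinely different route from the paper. The paper mirrors its proof of Lemma \ref{Lem: Hyperplane in every neighbourhood, parallel hyperplanes with common point}: it defines the iterated inversion sequence $\hat{S}_0 = S_2$, $\hat{S}_i = i_{\hat{S}_{i-1}}(S_1)$ in the Poincar\'e ball model and tracks, via the explicit inversion formula (\ref{Eq: Formula inversion on a sphere}), the centres $\hat{c}_i$, the feet $\hat{x}_i$ on the segment $[c_1,c_2]$ and the antipodes $\hat{a}_i$, concluding by a monotone-convergence argument that the radii shrink to $0$ and the centres converge to $\gamma(\infty)$. You instead separate the algebra from the geometry: the restriction of $\langle s_1,s_2\rangle$ to $\bar\gamma$ is an infinite dihedral group of point-reflections, so the feet $p_r = H_r \cap \bar\gamma$ form a discrete arithmetic progression along $\bar\gamma$ escaping to $\gamma(\infty)$; then, after normalizing $\bar\gamma$ to a diameter, rotational symmetry about the axis pins the Euclidean centre of each $S_{r_k}$ to that axis, and the two orthogonality conditions give the closed formulas $\rho_k = (1-t_k^2)/(2t_k)$ and $c_k = (1+t_k^2)/(2t_k)$, whence $\rho_k \to 0$, $c_k \to \xi$. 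Each step of yours checks out: every reflection $w s_i w^{-1}$ has mirror $wH_i$ perpendicular to $w\bar\gamma = \bar\gamma$ (so no mirror can contain $\bar\gamma$, and the restriction is genuinely a point-reflection), and the identification of the cone topology with the Euclidean topology on $\overline{D}{}^n$ is the standard fact the paper also uses implicitly. What the paper's approach buys is uniformity with the parallel-hyperplanes case and independence from any normalization; what yours buys is explicit quantitative control of the shrinking spheres in terms of the single parameter $t_k$, a cleaner conceptual split (a one-dimensional $D_\infty$ argument plus a two-equation computation), and it makes transparent \emph{why} convergence of the feet forces convergence of the whole hyperplanes, including their ideal boundaries --- the point where the paper instead relies on tracking three coupled sequences through Corollary \ref{Cor: Inversion on a sphere fixes as D^n set}. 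Note also that you only need Theorem \ref{Thm: Ultra-parallel theorem for subspaces} for existence and uniqueness of $\bar\gamma$; the invariance of $\bar\gamma$ under every reflection in the group then follows from the group preserving $\bar\gamma$, exactly as in the paper's opening lines.
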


\begin{proof}
Isometries preserve angles and intersection in a right angle of subspaces. Let $R$ be the set of reflections in $\langle S\rangle$. From Formula (\ref{Eq: Formular reflection on hyperplane}) in Section \ref{section 1} for a reflection on a hyperplane in the hyperboloid model, it is evident that $\bar{\gamma}$ gets fixed as a set by reflecting on $H_{\tilde{r}}$ for 
all $\tilde{r}$ in $R$. Thus, all hyperplanes in $\mathcal{H}= \{H_{\tilde{r}}\mid \tilde{r}\in R \}$  have $\bar{\gamma}$ as a common  perpendicular.\par
Just as in the proof of Lemma \ref{Lem: Hyperplane in every neighbourhood, parallel hyperplanes with common point}, we continue in the Poincaré ball model and define a sequence of spheres. Let $S_i$ be the sphere in $\widehat{\mathbb{E}}^n$ representing $H_i$. Without loss of generality, we can assume that $S_i$ has radius $r_i$, centre $c_i$, intersection $x_i$ with the segment $[c_1, c_2]$ in $\mathbb{E}^n$ and antipode $a_i$ to $x_i$ (apply an inversion on the other sphere, in case one sphere contains $\infty$).
Let $(\hat{S}_i)_{i\in \NN}$ be the sequence of spheres defined by 
\[
\hat{S}_0:= S_2\text{ and }\hat{S}_i=i_{\hat{S}_{i-1}}(S_1), 
\]
where $i_{\hat{S}_{i-1}}$ is the inversion on the sphere $\hat{S}_{i-1}$.
For all $\hat{S}_i$ exists a reflection $r_i\in R$ such that $\hat{S}_i$ represents $H_{r_i}$.
Let $\hat{c}_i$ be the centre of $\hat{S}_i$, $\hat{x}_i$ be the intersection of $\hat{S}_i$ with the segment $[c_1, c_2]$ in $\mathbb{E}^n$ and $\hat{a}_i$ be the antipode of $\hat{x}_i$ in $\hat{S}_i$.
From the proof of Theorem \ref{Thm: Ultra-parallel theorem for subspaces} and from the Formula \ref{Eq: Formula inversion on a sphere} in section \ref{section 1}, we deduce that all $\hat{c}_i,\hat{x}_i, \hat{a}_i$ and $\gamma(\infty)$ lay on the geodesic segment $[c_1, c_{2}]$.
Given the points $\hat{c}_{i-1}, \hat{a}_{i-1}, \hat{x}_{i-1}$,  Formula (\ref{Eq: Formula inversion on a sphere}) implies: 
\begin{align*}
\hat{x}_i &= i_{\hat{S}_{i-1}}(x_{1}) =\dfrac{d_2(\hat{a}_{i-1},\hat{x}_{i-1})^2}{4\cdot ||x_{1} - \hat{c}_{i-1} ||^2}\cdot (x_{1} - \hat{c}_{i-1})+ \hat{c}_{i-1}\quad\in (\hat{x}_{i-1}, \gamma(\infty)),\\
\hat{a}_i &= i_{\hat{S}_{i-1}}(a_{1}) =\dfrac{d_2(\hat{a}_{i-1},\hat{x}_{i-1})^2}{4\cdot ||a_{1} - \hat{c}_{i-1} ||^2}\cdot (a_{1} - \hat{c}_{i-1})+ \hat{c}_{i-1}\quad \in (\gamma(\infty), \hat{c}_{i-1}) ,\\
\hat{c}_i &= \frac{1}{2}(\hat{x}_i-\hat{a}_i)+\hat{a}_i \quad \in (\gamma(\infty),\hat{a}_{i} ) .
\end{align*}
We consider the sequence of distances $(d_2(\hat{c}_i, \hat{x}_i))_{i\in \NN}$, where $\hat{c}_i$ is outside the closed unit ball $\overline{D}{}^n$ and $\hat{x}_i$ is always inside the open unit ball $D{}^n$, since the inversion on a sphere intersecting $\mathbb{S}^{n-1}$ orthogonally is a bijection on $\widehat{\mathbb{E}}^n$ and fixes $\mathbb{S}^{n-1}$ as well as $D^n$ as sets (Corollary \ref{Cor: Inversion on a sphere fixes as D^n set}). This sequence is monotonously decreasing and converges to $0$, since the sequences of points $(\hat{c}_i)_{i\in \NN}$ and $(\hat{x}_i)_{i\in \NN}$ both converge to $\gamma(\infty)$. Hence, the sequence of spheres $(\hat{S}_i)_{i\in \NN}$ contains elements with arbitrary small radius and centre arbitrarily close to $\gamma(\infty)$ in $\mathbb{E}^n$.  This implies the existence of a hyperplane $H_{r}$ in the sequence $(H_i)_{i\in \NN}$ of hyperplanes corresponding to $(\hat{S}_i)_{i\in \mathbb{N}}$ such that $H_{r}\subseteq U$ with $r\in R$.
\end{proof}

We restate Theorem \ref{Thm: Reflection length n in certain neighbourhood}.

\setcounter{Theorem}{0}
\begin{Theorem}
Let $(W,S)$ be a hyperbolic reflection group in $\HH^n$ with fundamental domain $P$. Let $R$ be the set of reflections in $W$. Let $U$ be a neighbourhood in $\overline{\HH}{}^n$ of a point $\xi$ in $\partial \HH^n$. Suppose $\xi$ satisfies one of the following conditions:
\begin{itemize}
\item[(i)] $\xi$ is a common point of two parallel hyperplanes $H_r, H_{r'}$ with $r,r'\in R$.
\item[(ii)]$\xi$ is an endpoint of the common perpendicular of two ultra-parallel hyperplanes $H_r, H_{r'}$ with $r,r'\in R$.
\end{itemize}
For every $k\in \NN$ there exists $w\in W$ with $l_R(w)=k$ such that the domain $wP$ is contained in $U$. \com{If $P$ is not a polytope the other direction is also true?, maybe you can proof duszenko with this}
\end{Theorem}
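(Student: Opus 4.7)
My plan is to prove the theorem by case analysis on $k$, combining Lemmata \ref{Lem: Hyperplane in every neighbourhood, parallel hyperplanes with common point} and \ref{Lem: Ultra parallel hyperplanes, hyperplane in neighbourhood of endpoint of perpendicular} with Duszenko's Theorem \ref{Thm: Duszenko LR unbpunded}.

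For $k=1$, I would invoke the appropriate lemma (depending on whether $\xi$ satisfies (i) or (ii)) to produce, for any neighborhood $V \subseteq U$ of $\xi$, a reflection $r \in W$ with $H_r \subseteq V$. Choosing $V$ small enough in the cone topology, the open half-space of $\HH^n \setminus H_r$ containing $\xi$ lies entirely in $U$; since $P$ is in the opposite half-space (once $H_r$ is close enough to $\xi$), the tile $rP$ sits on the $\xi$-side of $H_r$ and is therefore contained in $U$.

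For $k=2$, I would use the element $\phi := rr'$, which by the hypothesis is parabolic fixing $\xi$ in case (i) and hyperbolic with $\xi$ as one of its two boundary fixed points in case (ii); after possibly replacing $\phi$ by $\phi^{-1}$, the iterates satisfy $\phi^n(x) \to \xi$ in the cone topology for every $x \in \HH^n$, so $\phi^n P \subseteq U$ for $n$ large enough. Since $\phi^n$ for $n \ne 0$ is a nontrivial rotation in the infinite dihedral subgroup $\langle r, r' \rangle$, its reflection length equals $2$.

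For $k \geq 3$, I plan to combine $\phi^n$ with an element of high reflection length produced by Duszenko's theorem. Since $l_R$ is unbounded on $W$ and changes by $\pm 1$ under multiplication by a reflection, it attains every non-negative integer value; I would pick $v \in W$ with $l_R(v) = k$. The element $w_n := \phi^n v$ satisfies $w_n P = \phi^n(vP) \to \xi$ in the cone topology, hence $w_n P \subseteq U$ for large $n$. By subadditivity and the bound $l_R(\phi^n) \leq 2$, one has $l_R(w_n) \in [k-2, k+2]$.

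The \textbf{main obstacle} will be to ensure $l_R(w_n) = k$ exactly, not merely close to $k$. I would address this by varying both $v$ (over elements of reflection lengths $k-2$, $k-1$, $k$, $k+1$, $k+2$, all furnished by Duszenko) and $n$, and by occasionally post-multiplying by additional reflections $\tilde r$ supplied by the lemmata with $H_{\tilde r}$ so close to $\xi$ that $w_n P$ lies entirely on the $P$-side of $H_{\tilde r}$; the reflected tile $\tilde r w_n P$ then sits in the tiny $\xi$-side of $H_{\tilde r}$, still inside $U$, while $l_R(\tilde r w_n) = l_R(w_n) \pm 1$. The careful combinatorial verification, via Dyer's Theorem \ref{Thm: Dyer's Theorem} applied to the explicit $S$-expressions of $\phi^n v$ and its modifications, that for some parameter choice $l_R$ hits $k$ (and is not trapped at $k \pm 2$) is the delicate step of the proof.
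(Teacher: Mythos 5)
Your plan has a genuine gap, and it is precisely the one you flag yourself at the end: nothing in the proposal actually forces $l_R(w)=k$. Your construction only delivers tiles in $U$ whose reflection length lies in $[k-2,k+2]$, and your repair mechanisms cannot close this window. Post-multiplying by a reflection $\tilde r$ with $H_{\tilde r}$ deep inside $U$ changes $l_R$ by $\pm 1$ with a sign you do not control, so you may be pushed from $k+1$ to $k+2$ rather than down to $k$; varying $v$ over elements of lengths $k-2,\dots,k+2$ does not help either, because $l_R(\phi^n v)$ is only pinned to within $\pm 2$ of $l_R(v)$, again with no control of the direction; and a case-by-case application of Dyer's Theorem \ref{Thm: Dyer's Theorem} to the words $\phi^n v$ is not a proof but a hope --- there is no reason a given parameter family ever hits $k$ exactly. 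The paper closes exactly this hole with a discrete intermediate value argument that your toolkit almost contains but never deploys: once one has a tile $rP$ of reflection length $1$ inside $U$ (your $k=1$ step) and, in the half-space of $H_r$ contained in $U$, a tile of reflection length at least $k$ (obtained from an element of length $k+1$, reflected across $H_r$ if it lies on the wrong side), a gallery of adjacent tiles inside that convex half-space connects the two; consecutive tiles in a gallery differ by one wall-crossing, so $l_R$ changes by exactly $\pm 1$ at each step, and an integer sequence moving by $\pm 1$ from $1$ to a value $\geq k$ must pass through $k$. This single observation replaces all of your case analysis for $k\geq 2$, and in particular makes the dynamical element $\phi=rr'$ unnecessary.

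A secondary gap sits in your dynamical step itself: you pass from pointwise convergence $\phi^n(x)\to\xi$ to the containment $\phi^n(vP)\subseteq U$, but $P$ is in general a noncompact polyhedron, and this implication needs uniform convergence on $\overline{vP}\subseteq\overline{\HH}{}^n$. For hyperbolic $\phi$ (case (ii)) this requires showing that the repelling fixed point $\xi^-$ does not lie in $\overline{vP}$ --- true, but it needs an argument (e.g.\ the mirrors $(rr')^nH_r$ separate any fixed tile from $\xi^-$), which you omit; for parabolic $\phi$ (case (i)) the fixed point $\xi$ \emph{can} lie in $\overline{vP}$ (an ideal vertex, as in the $(2,3,\infty)$ triangle group), and there north--south-type uniformity on $\overline{\HH}{}^n\setminus\{\xi\}$ does not apply directly; one must invoke convexity of $P$ together with the strict fundamental domain property to rule out points of $\overline{vP}$ near $\xi$ whose $\phi^n$-images stay far from $\xi$. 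Both issues are repairable, but as written the step ``$w_nP\to\xi$ in the cone topology, hence $w_nP\subseteq U$ for large $n$'' is unjustified, and the exact-length problem above remains open even granting it.
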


\begin{proof}
For the point $\xi\in \partial \HH^n$, let $U(c,d,\epsilon)$ be contained in the neighbourhood $U$, where $c$ is a geodesic ray with $c(\infty)= \xi$. In both cases there exists a hyperplane $H_r$ with $r\in R$ completely contained in $U(c,d,\epsilon)$ by Lemma \ref{Lem: Hyperplane in every neighbourhood, parallel hyperplanes with common point} and Lemma \ref{Lem: Ultra parallel hyperplanes, hyperplane in neighbourhood of endpoint of perpendicular}.
So the image $rP$ of $P$ under $r$ is contained in $U$. This proves the theorem for reflection length $1$. 
In general, $l_R$ is unbounded on $W$ (see Theorem \ref{Thm: Duszenko LR unbpunded}).  Thus, there exists $w\in W$ with reflection length $l_R(w)=k+1$.
In case $wP$ is in the half-space of $H_r^-$ that is contained in $U$, the proof is complete. Else $wP\subseteq H_r^+$ and we reflect $wP$ across $H_r$ to obtain $wrP\subseteq U$.
The element $wr$ has reflection length $n+1\pm 1$ (see \cite{Lewis2018} , 1.3).
A sequence $(w_1P, \dots, w_lP)$ of adjacent tiles in $U$ between $rP$ and $wrP$ contains tiles $w_iP$ of all reflection lengths $l_R(w_i)$ between $1$ and $l_R(wr)$ (see \cite{Lewis2018}, Paragraph 1.3). 
\end{proof}

\section{Hyperplanes not generating a hyperbolic reflection group}\label{Section 4}
In this section, we state three results that we need in the last section but hold in a more general setting. 

\begin{Lem}\label{Lem: Hyperebenen mit gemeinsamen Punkt in Rand erzeugen Dinfty}
For $k\geq 3$ let  $\{H_{r_1}, \dots, H_{r_k}\}$ be a set of pairwise parallel hyperplanes in $\HH^n$. If all $H_{r_i}$ have a common point $\xi$ in $\partial\HH^n$ and the group $D$ generated by the reflections $r_i$ across $H_{r_i}$ is a discrete subgroup of $Iso(\HH^n)$, then $D$ is isomorphic as a group to the infinite dihedral group $D_\infty$. 
\end{Lem}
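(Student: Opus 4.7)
The plan is to pass to the upper half-space model of $\HH^n$ by applying a Möbius transformation that sends the common ideal point $\xi$ to $\infty$. In this model $\partial\HH^n$ becomes $\mathbb{E}^{n-1}\cup\{\infty\}$, and every hyperplane $H_{r_i}$, having $\xi$ in its boundary, is represented by a vertical affine hyperplane in $\mathbb{E}^{n-1}\times\RR_{>0}$ determined by a codimension-one \emph{footprint} $F_i\subseteq\mathbb{E}^{n-1}$. Two vertical hyperplanes meet in $\HH^n$ if and only if their footprints meet in $\mathbb{E}^{n-1}$, so the pairwise-parallel hypothesis forces $F_1,\dots,F_k$ to be pairwise disjoint. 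Since two disjoint codimension-one affine subspaces of $\mathbb{E}^{n-1}$ necessarily share a common normal direction, all $F_i$ are pairwise Euclidean-parallel.

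After an affine change of coordinates in $\mathbb{E}^{n-1}$, I may assume $F_i=\{x\in\mathbb{E}^{n-1}\mid x_1=a_i\}$ for distinct reals $a_1,\dots,a_k$. The reflection $r_i$ then extends to $\HH^n$ as the Euclidean reflection $x_1\mapsto 2a_i-x_1$, fixing every other horizontal coordinate and the vertical coordinate. Consequently each product $r_ir_j$ is a horizontal translation along the $x_1$-axis by the amount $2(a_i-a_j)$, and every such translation lies in $D$.

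Let $T\le D$ be the subgroup generated by the products $r_ir_1$. By the previous paragraph $T$ embeds as a subgroup of the additive group $\RR$ via the translation parameter, and since $D$ is discrete in $\Iso(\HH^n)$, so is $T$. A discrete subgroup of $\RR$ is either trivial or infinite cyclic; the assumption $k\ge 3$ yields at least two distinct nonzero translations $r_1r_2$ and $r_1r_3$ in $T$, so $T$ is nontrivial and hence $T=\langle\tau\rangle$ for some $\tau$ of minimal positive shift. Each $r_ir_1$ equals $\tau^{m_i}$ for some $m_i\in\mathbb{Z}$, so $r_i=\tau^{m_i}r_1$, and therefore $D=\langle r_1,\tau\rangle$.

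It remains to identify $D$ with the infinite dihedral group. The element $r_1$ is an involution, $\tau$ has infinite order as a nontrivial Euclidean translation, and $r_1\tau r_1^{-1}=\tau^{-1}$ because $r_1$ is a reflection across a hyperplane orthogonal to the translation axis of $\tau$. Since these are precisely the defining relations of $D_\infty=\langle s,t\mid s^2=1,\, sts^{-1}=t^{-1}\rangle$ and no further relations can hold without contradicting the infinite order of $\tau$, we obtain $D\cong D_\infty$. The principal technical point is the reduction to a rank-one translation subgroup; this rests entirely on the fact that all footprints $F_i$ share a single common normal direction, after which discreteness pins the translation subgroup down as cyclic.
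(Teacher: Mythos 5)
Your proof is correct, and it takes a genuinely different route from the paper's. The paper stays model-free and synthetic: it first notes that every reflection hyperplane of $D$ contains $\xi$ in its boundary, then uses discreteness to pick two \emph{adjacent} hyperplanes $H, H_1$ from this family (a choice of half-spaces with $H^\varepsilon\cap H_1^\delta$ nonempty, not a half-space, and containing no further reflection hyperplane of $D$) and shows by a nesting/contradiction argument that the two corresponding reflections already generate $D$; since their product has infinite order, $D\cong D_\infty$. You instead compute in the upper half-space model with $\xi=\infty$: the hyperplanes become vertical hyperplanes over pairwise disjoint, hence parallel, affine footprints, the even-length subgroup $T$ becomes a subgroup of the horizontal translations along the common normal, and the classification of discrete subgroups of $(\RR,+)$ pins $T$ down as $\langle\tau\rangle$, giving $D=\langle\tau\rangle\rtimes\langle r_1\rangle\cong D_\infty$. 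Your version is more explicit about exactly where discreteness enters and sidesteps the paper's (somewhat delicate) existence claim for an innermost pair of half-spaces; the paper's version has the advantage that the identical argument is reused verbatim for the ultra-parallel analogue (Lemma \ref{Lem: Common perpendicular generates Dinfty}), whereas your computation would have to be redone there with the common perpendicular as vertical axis and a discrete subgroup of the hyperbolic translations along it, i.e.\ of $(\RR_{>0},\cdot)$. Two small points to tighten: the coordinate change you invoke should be a Euclidean isometry of $\mathbb{E}^{n-1}$ (rotation plus translation), not an arbitrary affine map, since only similarities of the boundary extend to isometries of $\HH^n$ --- a rotation aligning the common normal with the $x_1$-axis suffices; and in the final step the possible relation $\tau^m r_1=\id$ should be excluded explicitly (it forces $\tau^{2m}=\id$, hence $m=0$ and $r_1=\id$, which is absurd since $r_1$ is a nontrivial reflection), which is the verification hiding behind your phrase ``no further relations can hold.''
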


\begin{proof}
The uniqueness of $\xi\in \partial \HH^n$ as a common point of $\partial H_{r_1}, \dots, \partial H_{r_k}$ follows  from  Corollary \ref{Lem: Intersection of Hyperplanes empty or point}.
The hyperplane reflection $r_i$ across $H_{r_i}$ maps hyperplanes in $\{H_{r_1}, \dots, H_{r_k}\}$ to hyperplanes with $\xi$ in their boundary, because $H_{r_i}$ and $\partial H_{r_i}$ are fixed pointwise by $r_i$. 
Let $R$ be the set of all reflections in the group $D=\langle r_i\mid i\in \{1,\dots, k\}\rangle$. The corresponding hyperplanes all contain $\xi$ in their boundary.\par
We fix a hyperplane $H\in \mathcal{H}:= \{H_r\mid r\in R\}$. 
Since $W$ is discrete, we can choose $\varepsilon, \delta\in \{+,-\}$ such that the intersection of half-spaces $H^\varepsilon\cap H_1^\delta$ is not empty nor a half-space and contains no other hyperplane in $\mathcal{H}$.
We show that the reflections $s_1$ and $s$ corresponding to $H_1$ and $H$ generate $W$. Therefore, we assume that there exists a $H_{r_i}$ such that $r_i$ can't be written as a product of $s_1$ and $s$. There exists an element $w \in  \langle\{s, s_1\}\rangle$ such that
\[
H_{r_i}\subseteq wH^\varepsilon\cap wH_1^\delta.
\]
In words, the hyperplane $H_{r_i}$ is contained in the intersection of two half-spaces of copies of $H$ and $H_1$, because $H_{r_i}$ contains $\xi$ in its boundary, too.
Thus, the hyperplane $w^{-1}H_{r_i}\in \mathcal{H}$ is contained in  $H^\varepsilon\cap H_1^\delta$. This contradicts our assumption and we proved  $\langle\{s, s_1\}\rangle= D$. Since  $H$ and $H_1$ don't intersect in $\HH^n$, $ss_1$ has infinite order and $D$ is isomorphic to $D_\infty$. 
\end{proof}

\begin{Lem}\label{Lem: Common perpendicular generates Dinfty}
Let $\{H_{r_1}, \dots, H_{r_k}\}$ be a set of pairwise ultra-parallel hyperplanes in $\HH^n$ with $k\geq 3$ and a common perpendicular $\varrho$. If the group $D$ generated by the reflections $r_i$ across $H_{r_i}$ is a discrete subgroup of $Iso(\HH^n)$, then $D$ is isomorphic as a group to $D_\infty$. 
\end{Lem}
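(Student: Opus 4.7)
The plan is to mirror the proof of the preceding Lemma \ref{Lem: Hyperebenen mit gemeinsamen Punkt in Rand erzeugen Dinfty}, with the common perpendicular $\varrho$ playing the role that the common ideal point $\xi$ played there.

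First I would verify that each generating reflection $r_i$ preserves $\varrho$ as a set: since $\varrho$ meets $H_{r_i}$ orthogonally, formula (\ref{Eq: Formular reflection on hyperplane}) shows that $r_i$ fixes $H_{r_i}\cap\varrho$ and reflects $\varrho$ through this point. Hence the whole group $D$ stabilises $\varrho$ setwise. Because isometries preserve orthogonality, conjugation sends reflections whose hyperplanes are perpendicular to $\varrho$ to reflections of the same type, so by induction every hyperplane in $\mathcal{H}:=\{H_r\mid r\in R\}$ (where $R$ is the full set of reflections in $D$) meets $\varrho$ in a right angle. A short tangent-space computation in the hyperboloid model, using that $u_H$ must lie in $V_\varrho$ and be orthogonal to the intersection point, shows that a hyperplane perpendicular to $\varrho$ is uniquely determined by its intersection point with $\varrho$, so the map $H_r\mapsto H_r\cap\varrho$ is an injection $\mathcal{H}\hookrightarrow\varrho$.

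Next I would use discreteness of $D$ in $\mathit{Iso}(\HH^n)$ to show that the image $\{H_r\cap\varrho\mid r\in R\}\subseteq\varrho$ is a discrete subset. Indeed, if distinct reflections $r_{i_j}\in R$ satisfied $H_{r_{i_j}}\cap\varrho\to q\in\varrho$, then $r_{i_j}(q)\to q$, so infinitely many elements of $D$ would map a fixed compact neighbourhood of $q$ into itself, violating proper discontinuity. Using discreteness I can then choose $H\in\mathcal{H}$ such that the open geodesic segment in $\varrho$ between $H\cap\varrho$ and $H_{r_1}\cap\varrho$ contains no further point of $\{H_r\cap\varrho\mid r\in R\}$. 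Let $s\in D$ be the reflection across $H$.

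Finally, I would run the same dihedral argument as in Lemma \ref{Lem: Hyperebenen mit gemeinsamen Punkt in Rand erzeugen Dinfty} to conclude $\langle s,s_1\rangle=D$. The product $ss_1$ has infinite order because $H$ and $H_{r_1}$ are ultra-parallel, so $\langle s,s_1\rangle$ acts on $\varrho$ as the infinite dihedral group generated by the two reflections at $H\cap\varrho$ and $H_{r_1}\cap\varrho$, and its orbit on $\varrho$ partitions $\varrho$ into closed segments that are translates of $[H\cap\varrho,H_{r_1}\cap\varrho]$. If some $r_i\notin\langle s,s_1\rangle$, then $H_{r_i}\cap\varrho$ would lie in the interior of some translate $w\bigl([H\cap\varrho,H_{r_1}\cap\varrho]\bigr)$ with $w\in\langle s,s_1\rangle$, so $w^{-1}H_{r_i}\in\mathcal{H}$ would have its intersection with $\varrho$ strictly between $H\cap\varrho$ and $H_{r_1}\cap\varrho$, contradicting the choice of $H$. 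Hence $D=\langle s,s_1\rangle\cong D_\infty$.

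The main obstacle is the discreteness/injectivity step: one must both verify that perpendicularity to $\varrho$ at a given point determines a hyperplane uniquely and that proper discontinuity of the $D$-action (not merely discreteness as a subgroup of $\mathit{Iso}(\HH^n)$) forces the intersection points with $\varrho$ to be discrete. Once that is in place the reduction to an infinite dihedral group is a routine analogue of the argument for the parallel case.
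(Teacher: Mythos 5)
Your proposal is correct and takes essentially the same route as the paper: the paper likewise first shows that every reflection hyperplane $H_r$, $r\in R$, of $D$ has $\varrho$ as a perpendicular (since each $r_i$ fixes $\varrho$ setwise), and then declares the remaining dihedral argument ``analogous'' to the parallel-hyperplanes lemma, which is exactly the innermost-wall argument you carry out explicitly along $\varrho$. Your added details --- injectivity of $H_r\mapsto H_r\cap\varrho$ and discreteness of the intersection points (where your appeal to proper discontinuity is legitimate, since for the proper space $\HH^n$ a subgroup of $\Iso(\HH^n)$ is discrete if and only if it acts properly) --- simply fill in what the paper leaves implicit, and correctly dispose of the boundary case via injectivity, since an intersection point landing on an endpoint of a translated segment would force $r_i$ to be a conjugate of $s$ or $s_1$ in $\langle s,s_1\rangle$.
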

\begin{proof}

The hyperplane reflection $r_i$ across $H_{r_i}$ maps hyperplanes in $\{H_{r_1}, \dots, H_{r_k}\}$ to pairwise ultra-parallel hyperplanes with $\varrho$ as their common perpendicular, because $r_i$ fixes $\varrho$ as a set. Let $R$ be the set of all reflections in the group $D=\langle r_i\mid i\in \{1,\dots, k\}\rangle$. The hyperplanes in the set $\mathcal{H}:= \{ H_r\mid r\in R\}$ are all pairwise ultra-parallel and have $\varrho$ as their common perpendicular.
The rest of the proof is analogous to the one of Lemma \ref{Lem: Hyperebenen mit gemeinsamen Punkt in Rand erzeugen Dinfty}.
\end{proof}

\begin{Lem}\label{Lemma: Arbitrary distance between hyperplanes}
Let $(W,S)$ be a hyperbolic reflection group with fundamental polyhedron $P$. Let $R$ be the set of reflections. For every 
$r\in R$  and ever $\epsilon>0$ there exists $r_\epsilon\in R$ such that the hyperplanes $H_{r_\epsilon}$ and $H_r$ are ultra-parallel and 
\[
\bar{d}(H_r, H_{r_\epsilon})>\epsilon.
\]
\end{Lem}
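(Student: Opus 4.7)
The plan is to produce $r_\epsilon$ by iterating a hyperbolic translation inside $W$. I would first exhibit a single reflection $r_0 \in R$ whose hyperplane $H_{r_0}$ is ultra-parallel to $H_r$; then, using the infinite dihedral subgroup $\langle r, r_0 \rangle$ generated by these two reflections, I would produce a sequence $r_k \in R$ whose hyperplanes are orthogonal translates of $H_r$ at linearly growing distance, and choose $k$ so that the $k$-th one lies past the threshold $\epsilon$.

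Granting the existence of such an $r_0$, the iteration step is essentially forced by Theorem \ref{Thm: Ultra-parallel theorem for subspaces}: $H_r$ and $H_{r_0}$ share a unique common perpendicular $\rho$, and the element $t := r r_0 \in W$ is a product of two reflections across hyperplanes that meet $\rho$ orthogonally at distinct points, hence $t$ is a hyperbolic translation along $\rho$ with translation length $2\delta$, where $\delta := \bar{d}(H_r, H_{r_0}) > 0$. For each $k \in \NN$ I would set $r_k := t^k r t^{-k}$; this lies in $R$ because $R$ is closed under $W$-conjugation, and its fixed hyperplane is $H_{r_k} = t^k H_r$. Since $t^k$ translates $\rho$ by $2k\delta$ and preserves perpendicularity to $\rho$, the hyperplane $H_{r_k}$ is again perpendicular to $\rho$ and disjoint from $H_r$, so $H_r$ and $H_{r_k}$ are ultra-parallel with $\bar{d}(H_r, H_{r_k}) = 2k\delta$. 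Taking $k$ with $2k\delta > \epsilon$ and setting $r_\epsilon := r_k$ finishes the construction.

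The genuine obstacle is producing the initial ultra-parallel partner $r_0$. The plan is to exploit that $(W,S)$ is non-affine: by Theorem \ref{Thm: Duszenko LR unbpunded}, $l_R$ is unbounded on $W$, so $W$ is an infinite discrete subgroup of $\Iso(\HH^n)$ with no global fixed point in $\overline{\HH}{}^n$; in particular $W$ contains a hyperbolic isometry $h$ with some axis $\alpha$. After possibly replacing $h$ by a $W$-conjugate, one arranges $\alpha \not\subseteq H_r$ and $\partial \alpha \cap \partial H_r = \emptyset$. Then the hyperplanes $h^k H_r = H_{h^k r h^{-k}}$ form a sequence of distinct $W$-translates of $H_r$ that escape every compact set along $\alpha$, and for $|k|$ sufficiently large they are disjoint from $H_r$ in $\HH^n$ and share no point of $\partial \HH^n$ with $H_r$; hence they are ultra-parallel to $H_r$, and any such $r_0 := h^k r h^{-k}$ serves. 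The pathological configurations in which this argument might fail---namely, all $W$-conjugates of $H_r$ containing a common boundary point of $H_r$, or all admitting a common perpendicular with $H_r$---are ruled out by Lemma \ref{Lem: Hyperebenen mit gemeinsamen Punkt in Rand erzeugen Dinfty} and Lemma \ref{Lem: Common perpendicular generates Dinfty}, which would force the subgroup they generate to be merely infinite dihedral, contradicting the non-affine structure of $W$.
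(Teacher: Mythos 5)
Your amplification step is correct and is exactly the paper's own opening reduction: the paper likewise observes that it suffices to produce a single $\tilde r\in R$ with $\bar d(H_r,H_{\tilde r})>0$, and then ``reflects the hyperplanes several times on each other,'' which is precisely your translation $t=rr_0$ along the common perpendicular furnished by Theorem \ref{Thm: Ultra-parallel theorem for subspaces}; that half is fine. The genuine gap is in producing $r_0$, and it sits exactly where the paper works hardest. The step ``after possibly replacing $h$ by a $W$-conjugate, one arranges $\partial\alpha\cap\partial H_r=\emptyset$'' is unsupported: the axes of the conjugates $whw^{-1}$ give a single $W$-orbit of endpoint pairs, and nothing you say prevents every pair in that orbit from meeting the codimension-one sphere $\partial H_r$. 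To run this dynamically you would need something like density of hyperbolic fixed-point pairs in $\Lambda\times\Lambda$ for the limit set $\Lambda$, together with $\Lambda\not\subseteq\partial H_r$; neither is established, and the latter is a real hypothesis, not a formality. (Granted the arrangement, your limiting claim does hold by north--south dynamics, and the existence of a hyperbolic element is a citable classification fact rather than a consequence of Theorem \ref{Thm: Duszenko LR unbpunded} alone --- but these are the minor points.)

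The decisive flaw is your closing claim that the failure configurations are ruled out by Lemmas \ref{Lem: Hyperebenen mit gemeinsamen Punkt in Rand erzeugen Dinfty} and \ref{Lem: Common perpendicular generates Dinfty}. Those lemmas require \emph{all} hyperplanes of the family to share \emph{one} ideal point, respectively \emph{one} common perpendicular, whereas your argument fails pointwise: each translate $wH_r$ may meet $\overline{H_r}$ at a different place, and in particular every conjugate hyperplane could simply intersect $H_r$ in $\HH^n$ (say orthogonally) --- a configuration about which both lemmas are silent. Moreover, the group generated by the conjugates of $r$ is only the normal closure of $r$, not $W$, so even an infinite dihedral conclusion would not immediately contradict non-affineness. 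A concrete warning: for $W\le \Iso(\HH^3)$ generated by the reflection $r$ in a plane $Q$ together with reflections in planes orthogonal to $Q$ realizing a hyperbolic triangle group, every hyperbolic element has its axis inside $Q$ and every other reflection hyperplane meets $Q$ orthogonally, so neither of your two ``pathologies'' occurs and yet the construction never gets started; whether such a product is excluded by the paper's non-affineness convention is a matter of reading, but it shows your dichotomy of failure modes is incomplete. The intersecting case is exactly what the paper's proof is built to handle: it imports from \cite{Edgar2013} a rank-three universal reflection subgroup $\langle r_1,r_2,r_3\rangle$ with $r_i\in R$ and pairwise non-intersecting hyperplanes, and then uses Lemma \ref{Lem: Hyperplane in every neighbourhood, parallel hyperplanes with common point} or Lemma \ref{Lem: Ultra parallel hyperplanes, hyperplane in neighbourhood of endpoint of perpendicular} to accumulate conjugate hyperplanes inside a neighbourhood of a shared ideal point, respectively of an endpoint of a common perpendicular, chosen disjoint from $\overline{H_r}$. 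Your proposal contains no substitute for this input, and without one the base case of your induction is unproven.
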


\begin{proof}
It suffices to show that there exists $\tilde{r}\in R$ with $\bar{d}(H_r, H_{\tilde{r}})>0$, since we can reflect these hyperplanes several times on each other to obtain the $H_{r_\epsilon}$. Hyperplane reflections fix perpendiculars as sets. \par
$W$ has universal Coxeter groups of arbitrary large rank as  reflection subgroups (see \cite{Edgar2013}). Let $W_3= \langle r_1, r_2, r_3 \rangle$ be a universal Coxeter group contained in $W$ as a reflection subgroup with $\{r_1, r_2, r_3\}\subseteq R$ as a Coxeter generating set. $H_{r_1}, H_{r_2}$ and $H_{r_3}$ do not intersect pairwise in $\HH^n$, since $r_1, r_2, r_3$ have pairwise infinite order. Suppose that all $H_{r_i}$ intersect with $H_r$ in $\overline{\HH}{}^n$. Else there would be a hyperplane with distance greater than zero to $H_r$.\par
Assume that the hyperplanes $H_{r_1}$ and $H_r$ do not intersect in $\HH^n$ and have a common point in $\partial\HH^n$.
$H_r$ is contained in one half-space $H_{r_1}^\varepsilon$ with $\varepsilon\in \{+,-\}$. Each of $H_{r_2}$ and $H_{r_3}$ is also contained in a half-space.
Since $W_3$ is not isomorphic to $D_\infty$, the hyperplanes $H_{r_1}, H_{r_2}$ and $H_{r_3}$ have no common point in $\overline{\HH}{}^n$ (see Lemma \ref{Lem: Hyperebenen mit gemeinsamen Punkt in Rand erzeugen Dinfty}).
This implies that there exists $H_{r_j}$ with $j\in \{2,3\}$ such that $H_{r_j}$ doesn't intersect with $H_r$ in the point $H_r\cap H_{r_1}\subseteq \overline{\HH}{}^n$ (see Lemma \ref{Lem: Intersection of Hyperplanes empty or point}). Hence, $H_{r_j}$ is contained in $H_{r_1}^\varepsilon$, too. 
The hyperplane's reflection $H_{r_1r_jr_1}$ across $H_{r_1}$ does not intersect with $H_r$ in $\overline{\HH}{}^n$. \par
Let us assume that $H_r$ intersects all hyperplanes $H_{r_i}$ in $\HH^n$. Since $\langle\{r_1, r_2, r_3\}\rangle$ is isomorphic to a universal Coxeter group, the intersection of two distinct non-ultra-parallel hyperplanes $H_{r_i}$ and $H_{r_j}$, $1\leq i,j\leq 3$, is empty and $\partial H_{r_i}\cap \partial H_{r_j}= \{\delta\}$ (see Corollary \ref{Lem: Intersection of Hyperplanes empty or point}). There exists a neighbourhood $U$ of $\delta$ in $\overline{\HH}{}^n$ such that $U\cap  \overline{H}_r = \emptyset$, because $H_r$ intersects all $H_{r_i}$. By Lemma \ref{Lem: Hyperplane in every neighbourhood, parallel hyperplanes with common point} , we obtain hyperplanes in arbitrary small neighbourhoods of $\delta\in \partial \HH^n$ in $\overline{\HH}{}^n$. This gives us $H_{\tilde{r}}\subseteq U$ with $\tilde{r}\in R$ and $H_{\tilde{r}}$ is ultra-parallel to $H_r$.
Let the $H_{r_i}$ be pairwise ultra-parallel.  According to Lemma \ref{Lem: Common perpendicular generates Dinfty}, we can assume that $H_r$ is not orthogonal to $H_{r_i}$. This implies that $H_r$ does not contain the unique common perpendicular $\gamma$ of $H_{r_i}$ and $H_{r_j}$ (see Theorem \ref{Thm: Ultra-parallel theorem for subspaces}). Hence, by Lemma \ref{Lem: Ultra parallel hyperplanes, hyperplane in neighbourhood of endpoint of perpendicular} there exists a hyperplane $H_{\hat{r}}$ with $\hat{r}\in R$ in a neighbourhood $V$ of an endpoint $\gamma(\infty)$ such that $V\cap H_r= \emptyset$. $H_r$ and $H_{\hat{r}}$ are ultra-parallel.
\end{proof}

\section{Polytopes as fundamental domains}

\begin{Lem}\label{Lem: dense endpoints}
Let $(W,S)$ be a hyperbolic reflection group with a convex polytope $P$ as a fundamental domain. Let $R$ be the set of reflections in $W$. The set of ideal points of hyperplanes \[\mathcal{H}_\infty:=\{\gamma(\infty)\;|\; \gamma\subseteq H_r \text{ geodesic ray, } r\in R\}\] is dense in $\partial \mathbb{H}^n$.
\end{Lem}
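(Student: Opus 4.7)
For an arbitrary $\xi \in \partial \mathbb{H}^n$, the plan is to exhibit a sequence in $\mathcal{H}_\infty$ converging to $\xi$. Fix a base point $x_0 \in \mathrm{int}(P)$ and let $\gamma : [0,\infty) \to \mathbb{H}^n$ be the geodesic ray from $x_0$ to $\xi$. Split into cases depending on whether $\gamma$ crosses finitely or infinitely many walls of the $W$-tessellation.

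In the finite case, $\gamma$ is eventually contained in a single tile $wP$, so $\xi \in \overline{wP} \cap \partial \mathbb{H}^n$. Since $P$ is a convex polytope, $\overline{P} \cap \partial \mathbb{H}^n$ is the finite set of ideal vertices of $P$; every ideal vertex lies on the closure at infinity of at least one wall of $P$, so $\xi \in \partial H_r$ for some $r \in R$, and thus $\xi \in \mathcal{H}_\infty$ itself. In the infinite case, let $p_i = \gamma(t_i) \in H_{r_i}$ denote the successive wall crossings, $r_i \in R$. Proper discontinuity of the $W$-action (any compact subset of $\mathbb{H}^n$ meets only finitely many tiles) forces $t_i \to \infty$, and hence $p_i \to \xi$ in the cone topology. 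In the Poincaré ball model this translates to $|p_i| \to 1$ and $|p_i - \xi| \to 0$ in Euclidean distance.

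To finish the argument, I need a uniform Euclidean estimate: if $S \subseteq \widehat{\mathbb{E}}^n$ is a Euclidean sphere (or a Euclidean hyperplane through $0$) orthogonal to $\mathbb{S}^{n-1}$ and $p \in S$ lies in the open unit ball, then there is $q \in S \cap \mathbb{S}^{n-1}$ with $|p - q| \le 1 - |p|^2$. Applied to each $S_i$ representing $H_{r_i}$, this produces $q_i \in \partial H_{r_i} \subseteq \mathcal{H}_\infty$ with $|q_i - p_i| \to 0$, so $q_i \to \xi$, completing the density argument. The principal obstacle is proving this estimate, since the orthogonal sphere $S$ through a given $p$ can have arbitrarily large Euclidean radius when its centre is far from the origin. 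I would reduce to the $2$-plane through $p$ and the centre $c$ of $S$ and parametrise $S$ there as $(c_1 - \rho\cos\theta, \rho\sin\theta)$; the orthogonality relation $|c|^2 - \rho^2 = 1$ ensures that whenever $|p|$ is close to $1$ the parameter $\theta$ must be close to one of the solutions $\cos\theta = \rho/c_1$, which correspond to points of $S \cap \mathbb{S}^{n-1}$, and a direct expansion of $1-|p|^2$ and $|p-q|$ in terms of the angular deviation yields the desired bound.
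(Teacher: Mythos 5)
Your proposal is correct, and it takes a genuinely different route from the paper. The paper argues neighbourhood by neighbourhood: for a basic neighbourhood $U(c,r,\epsilon)$ of $\xi$ it invokes the earlier sphere-spanning lemma to produce an auxiliary hyperplane $E$ whose ideal boundary is exactly the sphere of endpoints of the rays bounding that neighbourhood, notes that the tiles $wP$ cover the half-space $E^{+}$ containing $\xi$ at infinity, and then uses convexity of the polytope tiles (a tile has only finitely many ideal points, so it cannot swallow a half-space) to force a wall $H_{r'}$, $r'\in R$, into $E^{+}$, whence $\partial H_{r'}$ meets the neighbourhood. You instead follow the ray toward $\xi$ and split on the number of wall crossings: finitely many crossings make $\xi$ an ideal point of a tile, and since ideal points of a convex polytope are extreme null rays of the defining polyhedral cone and every extreme ray lies on facets, you get $\xi\in\partial H_{r}\subseteq\mathcal{H}_\infty$ outright; infinitely many crossings give wall points $p_i\to\xi$ via proper discontinuity (distinct crossings lie on distinct hyperplanes, so bounded crossing times would contradict local finiteness), and your Euclidean estimate transfers these to ideal points. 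That estimate does hold, with room to spare: in the plane through $0$, $c$, $p$, with $\cos\theta_0=\rho/|c|$ marking $S\cap\mathbb{S}^{n-1}$, one computes $1-|p|^2=2|c|\rho(\cos\theta-\cos\theta_0)$ and, for the nearer boundary point $q$, $|p-q|=2\rho\sin\bigl(\tfrac{\theta_0-\theta}{2}\bigr)$, so that $|p-q|=(1-|p|^2)\big/\bigl(2|c|\sin\tfrac{\theta+\theta_0}{2}\bigr)$, and $2|c|\sin(\theta_0/2)=\sqrt{2|c|(|c|-\rho)}=\sqrt{2|c|/(|c|+\rho)}>1$ by the orthogonality relation $|c|^2-\rho^2=1$. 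What your route buys: it is quantitative, and it localizes the polytope hypothesis precisely — the infinite-crossing case works for an arbitrary fundamental polyhedron, and only the finite-crossing case uses finiteness of $\overline{wP}\cap\partial\mathbb{H}^n$, which is exactly where the lemma fails for non-polytope domains (compare the paper's closing Remark on the universal Coxeter group). What the paper's route buys: no computation, at the price of the auxiliary-hyperplane construction and the covering argument. Two small patches you should make: treat separately the possibility that $\gamma$ eventually runs inside a wall (then $\xi\in\partial H_r$ immediately; elsewhere, a geodesic meeting a totally geodesic hyperplane in a single point necessarily crosses it, so your dichotomy is exhaustive), and in the finite-crossing case say the tail of $\gamma$ lies in the \emph{closure} of a single tile, since it may travel along lower-dimensional faces.
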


\begin{proof}
Since $W$ is discrete and the set $\mathcal{H}_\infty$ is a countable union of spheres homeomorphic to $\mathbb{S}^{n-2}$, we have 
$
\mathcal{H}_\infty\subsetneq \partial\mathbb{H}^n
$
(see Section \ref{Subsection: Hyperbolic hyperplanes and their boundary}). 
Let $\xi\in \partial\mathbb{H}^n \setminus \mathcal{H}_\infty$ and let $c:[0,\infty]\to \overline{\mathbb{H}}{}^n$ be a geodesic ray with $c(\infty)=\xi$.
For $\epsilon, r>0$ the sets 
$
U(c,s,\epsilon)
$
form a neighbourhood basis of $\xi$ (see Definition \ref{Def: cone topology}).
We fix arbitrary $\epsilon, r>0$. In the Poincaré ball model, the set of the endpoints $C(\infty)$ of geodesic rays in  
\[
C:=\{ c^*\;|\; c^*\text{ geodesic ray, }c^*(0)=c(0),\; d(c^*(r), c(r))=\epsilon \}
\] 
is a sphere in $\partial \HH^n$.
According to Lemma \ref{Lem: Existence of  unique sphere that intersects sphere in certain sphere}, there exists a hyperplane $E$, possibly not in $\{H_r\;|\; r\in R\}$, with 
$
\partial E= C(\infty).
$
Let $E^+$ be the half-space such that $\xi$ is in the closure $\overline{E^+}$.
To prove the lemma, it is sufficient to show that there exists a hyperplane $H_{r'}$ with $r'\in R$ that intersects the half-space $E^+$ non-empty.
Then $U(c,s,\epsilon)\cap \partial H_r'\neq \emptyset$, because hyperplanes as well as $\mathbb{H}^n$ are uniquely geodesic subspaces (see \cite{Bridson2009}, p. 21).
Since $P$ is a strict fundamental domain, there exists $w\in W$ with $E^+\cap wP\neq\emptyset$. 
This implies that there exists a hyperplane $H_{r'}$ with $r'\in R$ such that $E^+\cap H_{r'}\neq \emptyset$, because $P$ is a convex polytope. 
\end{proof}

\begin{Not}\label{Not: Sets of ideal points}
Let $(W,S)$ be a hyperbolic reflection group in $\mathbb{H}^n$. Let $R$ be the set of reflections. We write $\mathcal{I}_p(W)$ for the set of ideal points $\xi\in \partial \HH^n$, such that there exists two parallel hyperplanes $H_{r_1}$ and $H_{r_2}$ corresponding to $r_1, r_2\in R$ with $\xi\in \partial H_{r_1}\cap \partial H_{r_2}$.
Furthermore, we write $\mathcal{P}_{up}(W)$ for the set of ideal points $\mu\in \partial \HH^n$, such that $\mu$ is an endpoint of the common perpendicular of two ultra-parallel hyperplanes $H_{r_3}$ and $H_{r_4}$ corresponding to $r_3, r_4\in R$. 
\end{Not}

\begin{Thm}\label{Thm: dense endpoints}
Let $(W,S)$ be a hyperbolic reflection group in $\mathbb{H}^n$ with a convex polytope $P$ as a fundamental domain. The union  
\[
\mathcal{I}_p(W)\cup \mathcal{P}_{up}(W)
\] 
is dense in the visual boundary $\partial \mathbb{H}^n$.
\end{Thm}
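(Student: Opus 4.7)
My plan is to produce a single $W$-orbit contained in $\mathcal{I}_p(W)\cup\mathcal{P}_{up}(W)$ and then to show that this orbit is dense in $\partial\HH^n$.

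The first ingredient is non-emptiness of $\mathcal{P}_{up}(W)$: fix any $r\in R$, apply Lemma \ref{Lemma: Arbitrary distance between hyperplanes} to obtain some $r'\in R$ with $H_r$ and $H_{r'}$ ultra-parallel, and invoke Theorem \ref{Thm: Ultra-parallel theorem for subspaces} to extract a unique common perpendicular whose two ideal endpoints belong to $\mathcal{P}_{up}(W)$. The second ingredient is $W$-invariance of $\mathcal{I}_p(W)\cup\mathcal{P}_{up}(W)$: for $w\in W$ and $r_1,r_2\in R$ one has $wr_iw^{-1}\in R$ and $wH_{r_i}=H_{wr_iw^{-1}}$, and the isometry $w$ preserves the relations "parallel" and "ultra-parallel" as well as the associated common ideal points and common perpendiculars, so $w(\mathcal{I}_p(W)\cup\mathcal{P}_{up}(W))\subseteq\mathcal{I}_p(W)\cup\mathcal{P}_{up}(W)$.

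The third and decisive ingredient is that every $W$-orbit on $\partial\HH^n$ is dense. Since $P$ is a convex polytope in the sense of Definition \ref{Def: Convex polytope}, it has finite hyperbolic volume, so $W$ is a lattice in $\Iso(\HH^n)$. For such $W$ the limit set coincides with the full boundary $\partial\HH^n$, and $W$ acts minimally on its limit set; consequently the orbit $W\mu$ of any $\mu\in\mathcal{P}_{up}(W)$ is dense in $\partial\HH^n$. Combined with invariance this gives a dense subset $W\mu\subseteq\mathcal{I}_p(W)\cup\mathcal{P}_{up}(W)$, proving the theorem.

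The step I expect to be the principal obstacle is the density of orbits, since the paper does not explicitly develop limit-set theory. A self-contained replacement proceeds via convergence dynamics: given $\xi\in\partial\HH^n$ and a basepoint $x_0\in P$, choose a geodesic ray from $x_0$ to $\xi$ that successively enters tiles $w_nP$, so that $w_nx_0\to\xi$, and after extracting a subsequence $w_n^{-1}x_0\to\xi^-$ for some $\xi^-\in\partial\HH^n$. The standard convergence property of discrete isometry groups of $\HH^n$ then yields $w_n\eta\to\xi$ for every $\eta\in\overline{\HH}{}^n\setminus\{\xi^-\}$. Since $\mathcal{P}_{up}(W)$ contains at least two points (the two endpoints of any common perpendicular), there exists $\eta\in\mathcal{P}_{up}(W)\setminus\{\xi^-\}$; the point $w_n\eta$ lies in $\mathcal{P}_{up}(W)$ by invariance and enters any prescribed neighbourhood of $\xi$ for $n$ sufficiently large, establishing the density.
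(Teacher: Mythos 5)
Your main argument is correct, but it is a genuinely different proof from the paper's. The paper argues locally and constructively: assuming a basic neighbourhood $U(c,\delta,\epsilon)$ misses $\mathcal{I}_p(W)$, it uses Lemma \ref{Lem: dense endpoints} and an induction on dimension (intersecting hyperplanes inside hyperplanes) to produce a geodesic line that is an intersection of hyperplanes from $\mathcal{H}$ with an endpoint in $U(c,\delta,\epsilon)$, then exploits the fact that a polytope fundamental domain admits only finitely many dihedral angle types to trap an entire half-space of some $H_{t'}$ inside $U(c,\delta,\epsilon)$, and finally applies Lemma \ref{Lemma: Arbitrary distance between hyperplanes} together with Theorem \ref{Thm: Ultra-parallel theorem for subspaces} to place an endpoint of a common perpendicular in $U(c,\delta,\epsilon)$. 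You instead prove non-emptiness of $\mathcal{P}_{up}(W)$ (your use of Lemma \ref{Lemma: Arbitrary distance between hyperplanes} and Theorem \ref{Thm: Ultra-parallel theorem for subspaces} here is fine), observe $W$-invariance via $wH_r=H_{wrw^{-1}}$ (also fine), and reduce everything to density of a single orbit in $\partial\HH^n$. That reduction is sound: by Definition 1.2 a convex polytope is the intersection of $\HH^n$ with a finitely generated cone inside the light cone, hence the hyperbolic convex hull of finitely many ordinary and ideal points, hence of finite volume; so $W$ is a lattice, its limit set is all of $\partial\HH^n$, $W$ is non-elementary (the limit set is infinite, or cite the rank-$3$ universal Coxeter subgroups already used in the proof of Lemma \ref{Lemma: Arbitrary distance between hyperplanes}), and $\Lambda(W)\subseteq\overline{W\mu}$ for every $\mu\in\overline{\HH}{}^n$. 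Your route is shorter and actually proves something stronger than the theorem, namely that the single orbit $W\mu$ of one point $\mu\in\mathcal{P}_{up}(W)$ is already dense; the trade-off is that it imports standard Kleinian-group machinery (lattices, limit sets, minimality, convergence dynamics) that the paper deliberately avoids, whereas the paper's argument stays inside its elementary toolkit and additionally shows the localized dichotomy that any neighbourhood missing $\mathcal{I}_p(W)$ must meet $\mathcal{P}_{up}(W)$.

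One genuine gap: your "self-contained replacement" of the orbit-density step does not work as stated. You claim that a geodesic ray from $x_0\in P$ to $\xi$ successively enters tiles $w_nP$ "so that $w_nx_0\to\xi$". This presupposes that $d(w_nx_0,\gamma(t_n))$ is uniformly bounded, i.e.\ that $P$ has finite diameter; but the polytopes of Definition 1.2 may have ideal vertices, so $P$ can be unbounded. Worse, if $\xi$ is an ideal vertex of some tile, the ray eventually remains in a single tile and the sequence $(w_n)$ is eventually constant, so there is no dynamics at all; and for rays dipping deeply into cusp neighbourhoods the points $w_nx_0$ are not a priori close to the ray. Repairing this requires either a separate treatment of parabolic fixed points (using the infinite cusp stabilizers of Theorem \ref{Thm: Polyhedron reflection group is Coxeter group, Vinberg+Davis}(v)) together with an argument that the remaining $\xi$ are conical, or simply the statement $\Lambda(W)=\partial\HH^n$ for lattices --- which is exactly the limit-set fact the replacement was meant to avoid. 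So keep the lattice/limit-set citation as the actual proof of the density step (it is correct), and drop or substantially rework the convergence-dynamics sketch.
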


\begin{proof}
Assume that there exists a point $\xi\in \partial\HH^n$, real numbers $\epsilon, \delta>0$ and a neighbourhood $U(c,\delta,\epsilon)$ of $\xi$ such that $\mathcal{I}_p(W)\cap U(c,r,\epsilon)=\emptyset$. Let $R$ be the set of reflections. We want to show $\mathcal{P}_{up}(W)\cap U(c,\delta,\epsilon)\neq \emptyset$ and begin by proving that there exists an endpoint $\varphi(\infty)\in U(c,\delta,\epsilon)$ of a geodesic ray $\varphi$ contained in a one-dimensional intersection of finitely many hyperplanes in $\mathcal{H}=\{H_r\mid r\in R\}$. Therefore, we successively follow the implications for each dimension lower hereafter:\par
Lemma \ref{Lem: dense endpoints} states that the endpoints $\mathcal{H}_\infty=\{\gamma(\infty)\;|\; \gamma\subseteq H_r \text{ geodesic ray, } r\in R\}$ are dense in $\partial\HH^n$. 
Hence, there exists a hyperplane $H_r$ with $r\in R$ and an ideal point $\nu$ such that $\nu\in \partial H_r\cap U(c,\delta,\epsilon)$. The hyperplane $H_r$ is isometric to $\HH^{n-1}$ (see Section \ref{Subsection: Hyperbolic hyperplanes and their boundary}) and the intersection $\partial H_r\cap U(c,\delta,\epsilon)$ is a neighbourhood of $\nu$ in $\overline{H_r}$. We consider hyperplanes in $H_r$ that are intersections $H_r\cap H_t$ for $t\in R$. There exists $w\in W$ such that $P'=wP\cap H_r$ is a polytope in $H_r$ and we can apply Lemma \ref{Lem: dense endpoints} again as well as analogous arguments in one dimension lower until dimension one.\par
Thus, there exists a geodesic line $\gamma$ in $\HH^n$ that is the intersection of finitely many hyperplanes in $\mathcal{H}$ and has an endpoint $\gamma(\infty)$ in $U(c,\delta,\epsilon)$. The geodesic ray ${\gamma}$  contains dimension-$1$ faces of infinitely many $uP$ with $u\in W$, since $P$ is a polytope and we assume that $\mathcal{I}_p(W)\cap U(c,\delta,\epsilon)$ is empty.
Considering the dimension-$0$ faces of the $uP$, this implies that $\gamma$ is intersected punctually in intervals of finitely many different lengths by hyperplanes $H_t$ with $t\in T\subseteq R$ with $|T|=\infty$.\par 
In the Poincaré ball model, this setting translates to spheres $S_t$ with $t\in T$ intersecting the circle $C_\gamma$ corresponding to $\gamma$, where $S_t$ and $C_\gamma$ intersect $\mathbb{S}^{n-1}$ orthogonally for all $t\in T$. The spheres $S_t$ intersect $C_\gamma$ in a finite number of different angles, because the only angles that can occur are inherited from $P$ as it is a convex polytope and a strict fundamental domain. Thus, we can choose a sphere $S_{t'}$ with $t'\in T$ intersecting $C_\gamma$ sufficiently close to $\gamma(\infty)$, such that the intersection $S_{t'}\cap D^n$ with the open unit ball $D^n$ is contained in $U(c,\delta,\epsilon)$. One half-space $H_{t'}^\varepsilon$ of the hyperplane corresponding to $S_{t'}$ is contained in $U(c,\delta,\epsilon)$.  For $H_{t'}$ exists a hyperplane $H_{\tilde{t}}\in \mathcal{H}$ that is ultra-parallel to $H_{t'}$ (see Lemma \ref{Lemma: Arbitrary distance between hyperplanes}). The unique common perpendicular $\varrho$ of $H_t$ and $H_{t'}$, which exists by Theorem \ref{Thm: Ultra-parallel theorem for subspaces}, has one endpoint $\varrho(\infty)\in \mathcal{P}_{up}(W)$ in the boundary $\partial H_{t'}^\varepsilon\subseteq U(c,\delta,\epsilon)$. It follows $\mathcal{P}_{up}(W)\cap  U(c,\delta,\epsilon)\neq \emptyset$, which completes the proof.
\end{proof}

\begin{Thm}\label{Thm: Reflection length n in every neighbourhood}
Let $(W,S)$ be a hyperbolic reflection group in $\HH^n$ with a polytope $P$ as a fundamental domain. Let $R$ be the set of reflections in $W$. Let $U$ be a neighbourhood in $\overline{\HH}{}^n$ of a point $\xi$ in $\partial \HH^n$.
For every $k\in \NN$ there exists $w\in W$ with $l_R(w)=k$ such that the domain $wP$ is contained in $U$.
\end{Thm}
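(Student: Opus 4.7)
The plan is to reduce the statement to Theorem \ref{Thm: Reflection length n in certain neighbourhood} by means of the density result Theorem \ref{Thm: dense endpoints}. Concretely, once the required boundary point is produced inside $U$, the earlier theorem supplies tiles of every reflection length in any neighbourhood of it, which can be chosen small enough to lie inside $U$.

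First I would unpack the cone topology: since $U$ is a neighbourhood of $\xi\in\partial\HH^n$ in $\overline{\HH}{}^n$, I may shrink $U$ to a basic open set of the form $U(c,r,\epsilon)$ without loss of generality, where $c$ is a geodesic ray with $c(\infty)=\xi$ (see Definition \ref{Def: cone topology}). The intersection $U\cap\partial\HH^n$ is then a non-empty open subset of $\partial\HH^n$ containing $\xi$. By Theorem \ref{Thm: dense endpoints}, the set $\mathcal{I}_p(W)\cup\mathcal{P}_{up}(W)$ is dense in $\partial\HH^n$, so there exists a point
\[
\eta\in\bigl(\mathcal{I}_p(W)\cup\mathcal{P}_{up}(W)\bigr)\cap U.
\]
By Notation \ref{Not: Sets of ideal points}, $\eta$ satisfies either condition (i) or condition (ii) of Theorem \ref{Thm: Reflection length n in certain neighbourhood}.

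Next I would choose a neighbourhood $V\subseteq \overline{\HH}{}^n$ of $\eta$ with $V\subseteq U$; such a $V$ exists because $U$ is open in the cone topology and $\eta\in U$. Applying Theorem \ref{Thm: Reflection length n in certain neighbourhood} to the point $\eta$ and neighbourhood $V$ yields, for every $k\in\NN$, an element $w\in W$ with $l_R(w)=k$ such that $wP\subseteq V\subseteq U$. This gives the required tiles and finishes the argument.

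The genuine content lies entirely in the two theorems being combined; no obstacle beyond the topological bookkeeping above is expected, provided one is careful that Theorem \ref{Thm: dense endpoints} (which uses that $P$ is a polytope) is available in the present hypothesis. If one wanted to avoid passing through a basic cone-topology neighbourhood, one could equivalently note that any open $U\subseteq\overline{\HH}{}^n$ meeting $\partial\HH^n$ contains a point of $\mathcal{I}_p(W)\cup\mathcal{P}_{up}(W)$ together with a smaller neighbourhood of that point lying in $U$, and then invoke Theorem \ref{Thm: Reflection length n in certain neighbourhood} directly.
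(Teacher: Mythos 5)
Your proposal is correct and follows essentially the same route as the paper: invoke Theorem \ref{Thm: dense endpoints} to find a point of $\mathcal{I}_p(W)\cup\mathcal{P}_{up}(W)$ inside $U$, pick a smaller neighbourhood of that point contained in $U$, and apply Theorem \ref{Thm: Reflection length n in certain neighbourhood}. Your version is slightly more careful about the topological bookkeeping (shrinking to a basic set $U(c,r,\epsilon)$ and noting that $U\cap\partial\HH^n$ is open and non-empty), but the substance is identical.
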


\begin{proof}
 Theorem \ref{Thm: dense endpoints} states that there exists a point $\nu\in \partial \HH^n$ in $U$ such that  $\nu$ is either an ideal point of two parallel hyperplanes corresponding to reflections in $R$ or one endpoint of a common perpendicular of two ultra-parallel hyperplanes corresponding to reflections in $R$. So $\nu$ satisfies one of the conditions in Theorem \ref{Thm: Reflection length n in certain neighbourhood} and in every neighbourhood of $\nu$ for every $k\in\NN$ exists $wP$ with $l_R(w) = k$. Since $\HH^n$ is a metric space, there exists a neighbourhood $U'$ of $\nu$ that is completely contained in $U$ and the theorem is proven.
\end{proof}

\begin{Rem}
Theorem \ref{Thm: Reflection length n in every neighbourhood} is only true for reflection groups with a polytope as a fundamental domain. For reflection groups with polyhedra that aren't polytopes as fundamental domains, the theorem is false. One example of such a reflection group is the hyperbolic reflection group generated by three pairwise ultra-parallel hyperplanes in $\HH^n$. This group is isomorphic to the universal Coxeter group $W_n$ with three generators. Let $w$ be in $W_n$ and $R$ be the set of reflections in $W_n$. Since there exist neighbourhoods $U_w$ of a point $\xi_w$ in the boundary of every domain $wP$ such that $U_w\cap wP=U_w$, the only reflection length arbitrary close to $w\xi$ is the reflection length $l_R(w)$. 
\end{Rem}

\paragraph{\bf{Acknowledgements}}The author thanks Petra Schwer and Thomas Kahle for many helpful suggestions 
during the preparation of the paper. Further, the author thanks Sami Douba for drawing the author’s attention to the work of Felikson and Tumarkin. The author was supported partially by the
Deutsche Forschungsgemeinschaft (DFG, German Research Foundation) – 314838170, GRK 2297
MathCoRe.

\bibliographystyle{amsrefs}
\bibliography{Literatur.bib}

\end{document}